\title{Rigidification of cubical quasi-categories}
\author{Pierre-Louis Curien}
\address[P.-L. Curien]{
CNRS, Univ. Paris Cit\'e, IRIF, INRIA}
\email{curien@irif.fr}
\author{Muriel Livernet}
\address[M. Livernet]{
Univ Paris Cit\'e, Institut de Math\'ematiques de Jussieu-Paris Rive Gauche, CNRS, Sorbonne Universit\'e,  Paris, France}
\email{muriel.livernet@imj-prg.fr}
\author{Gabriel Saadia}
\address[G. Saadia]{
Dept. of Mathematics, Stockholm University}
\email{gabriel.saadia@math.su.se}
\newtheorem{thm}[subsubsection]{Theorem}
\newtheorem{lem}[subsubsection]{Lemma}
\newtheorem{prop}[subsubsection]{Proposition}
\newtheorem{cor}[subsubsection]{Corollary}
\theoremstyle{definition}
\newtheorem{ex}[subsubsection]{Example}
\newtheorem{notation}[subsubsection]{Notation}
\newtheorem{defn}[subsubsection]{Definition}
\newtheorem{rmk}[subsubsection]{Remark}
\newcommand{\ie}{\emph{i.e.} }
\newcommand{\C}{\mathfrak{C}}
\newcommand{\Cbox}{\mathfrak{C}^{\square}}
\newcommand{\Nbox}{N^{\square}}
\newcommand{\cSdp}{c\EuScript{S}et_{*,*}}
\newcommand{\sCdp}{s\EuScript{C}at_{*,*}}
\newcommand{\Set}{\EuScript{S}et}
\newcommand{\Cat}{\EuScript{C}at}
\newcommand{\Nec}{\EuScript{N}ec}
\newcommand{\slicecat}[2]{{#1}\downarrow{#2}}
\newcommand{\enrichedcat}[1]{ \it{#1}\mbox{-}\Cat}
\DeclareMathOperator*{\hocolim}{hocolim}
\DeclareMathOperator*{\colim}{colim}
\DeclareMathOperator*{\Hom}{Hom}
\DeclareMathOperator*{\Ob}{Ob}
\DeclareMathOperator*{\id}{id}
\DeclareMathOperator*{\SubNeck}{SubNeck}
\date{\today}
\subjclass[2020]{
18Nxx, 
18N60, 
06A07, 
20B30 
}
\keywords{Quillen model category structures, $\infty$-categories, cubical sets, posets, weak Bruhat order on the symmetric group}
\begin{document}
\maketitle

\begin{abstract}
We construct a cubical analogue of the rigidification functor from quasi-categories to simplicial categories present in the work of  Joyal and Lurie. We define a functor  $\Cbox$ from the category $c\Set$ of 
cubical sets of 
Doherty-Kapulkin-Lindsey-Sattler
 to the category $s\Cat$ of (small) simplicial categories.  We show that this rigidification functor establishes a Quillen equivalence between the Joyal model structure on 
 $c\Set$ (as it is called by the four authors) and Bergner's model structure on  $s\Cat$. We follow  the approach to  rigidification of Dugger and Spivak, adapting 
 their framework of necklaces to the cubical setting.
\end{abstract}

\section*{Introduction}

The last decades have seen an explosion of the use of $\infty$-categories  in various fields such as algebraic topology, algebraic geometry, 
homotopy type theory. During the first years of 2000, various definitions of $\infty$-categories have emerged, starting from the notion of 
quasi-categories developed by Joyal in \cite{J} and by Lurie in \cite{L} based on the definition of Boardman-Vogt in \cite{BV73}.  Other definitions have been 
explored such as enriched categories in spaces (or Kan complexes), or complete Segal spaces to name a few. By model of  $\infty$-categories 
we mean  a category with a Quillen model category structure whose fibrant-cofibrant objects are the  $\infty$-categories in consideration and whose notion of weak 
equivalence corresponds to a good notion of equivalence of $\infty$-categories. Bergner's 
book \cite{B18} clearly explains these different models and the Quillen equivalences relating them.\\

For instance, the model for quasi-categories is the Joyal model category structure on the category $s\Set$ of simplicial sets, while the model for 
categories enriched in Kan complexes is the Bergner model structure on the category $s\Cat$ of (small) simplicial categories. There exists 
a so-called rigidification functor $\C^{\Delta}$ from $s\Set$ to the category $s\Cat$ which is a Quillen equivalence  between these two models.
This functor is called rigidification because simplicial categories have a  strict composition of 1-morphisms, as opposed to quasi-categories where 
only weak compositions exist.  The construction of the rigidification as well as the proof that it yields a  Quillen equivalence have been achieved 
first  in an unpublished manuscript of Joyal \cite{J2},  then by Lurie \cite{L}, and then, by  Dugger and Spivak in \cite{DS11}.
Dugger and Spivak build their rigidification functor using a technical tool, necklaces, and prove that there is a zig-zag of weak equivalences of 
simplicial categories between their construction and Lurie's one. The key idea of this construction is the following: given an ordered 
simplicial set  $X$, the 
simplicial set  $\C^\Delta(X)(a,b)$ is the nerve of a poset whose objects are directed paths and relations are generated by $2$-simplices in $X$. In 
case $X=\Delta^n$, one obtains the subset lattice of an ordered set. \\

Cubical sets have been often considered as an alternative to simplicial sets in  combinatorial topology, including in the early 
work of Kan and Serre (see e.g. \cite{Ser51}). It has been also developed in  computer science, in particular in concurrency theory (see e.g. 
\cite{Pratt91}, \cite{Gau08} and \cite{FGHM16}) and in homotopy type theory (see e.g. \cite{CCHM18}).
In \cite{DKLS},
Doherty, Kapulkin, Lindsey, and Sattler have defined a notion of cubical quasi-category, and have 
constructed a model category structure on cubical sets,  analogous to the Joyal structure on simplicial sets, whose fibrant-cofibrant objects are cubical 
quasi-categories. They also show that
the categories  $c\Set$ of cubical sets and $s\Set$ are related  by two adjunctions. The first one  is $T\dashv U$, where $T:c\Set \rightarrow s\Set$ is a  
triangulation functor, and the second one is $Q\dashv \int$, where $Q:s\Set\rightarrow c\Set$ is a ``cubification'' functor implementing  simplices as 
cubes with some degenerate faces. Both  give rise to Quillen equivalences between these model category structures. So that cubical quasi-categories provide another 
definition for the notion of $\infty$-category. By plugging together the Quillen equivalences  $T:c\Set \rightarrow s\Set$  and $\C^{\Delta}:s\Set\rightarrow s\Cat$ of {\em triangulation} 
and rigidification,  we get a Quillen equivalence between Joyal model structure of $c\Set$ 
and Bergner's model structure on $s\Cat$. \\

The goal of this paper is to build a different, direct
Quillen equivalence $\C^\Box:c\Set\rightarrow s\Cat$ using directed paths in the spirit of Dugger and Spivak. Note that the same notion of directed path is used in 
directed homotopy theory with applications to computer science. We refer the interested reader to the papers by Ziemia\'nski (\cite{Zie17}, \cite{Zie20}),  and the references therin. In particular, for 
a representable cubical set $\square^n$ and for two vertices $a$ and $b$, the simplicial set $\C^\Box(\square^n)(a,b)$ is the nerve of a poset whose objects are 
directed paths from $a$ to $b$ in the $n$-cube and relations are generated by $2$-cubes in $\square^n$. We prove a result of independent interest, namely that this poset 
is isomorphic to the weak Bruhat order on a symmetric group. Following closely the techniques developed by Dugger and Spivak, we prove that the  functor $\Cbox$ is the
 left adjoint of a Quillen equivalence between two models of $\infty$-categories, making use  this time  of the {\em cubification} equivalence of \cite{DKLS}, by showing that 
 $\C^{\Delta}$ factorises through cubification via our rigidification, up to natural homotopy, \ie $\Cbox \circ Q \stackrel{\sim}\Rightarrow \C^{\Delta}$, and then concluding 
 by the 2 out of 3 property.
 
 \subsection*{Plan of the paper}
In Section 
\ref{recollection-section}, we recall Bergner's model structure and the material from \cite{DKLS} needed for our purposes.
Section \ref{necklace-section} is devoted to the study of paths and necklaces (adapted from \cite{DS11,DS2}). 
We define our rigidification functor and study its properties in Section \ref{rigidification-section}.
The Quillen equivalence is established in Section \ref{main-section}. The two appendices \ref{category-appendix} and \ref{combinatorics-appendix} 
deal with relevant categorical  and combinatorial matters, respectively.

\tableofcontents

\section{Recollection and Notation} \label{recollection-section}

\subsection{Simplicial rigidification}\label{S:simp_rig} 
We recall the {\sl Bergner model structure} of $s\Cat$. It is the enriched model structure coming from the usual Kan-Quillen model 
structure on $s\Set$.

\begin{itemize}
    \item Weak equivalences are {\sl Dwyer-Kan equivalences}, that is, functors $F : \EuScript{C}\to\EuScript{D}$ such that
        \begin{itemize}
            \item $\pi_0(F) : \pi_0(\EuScript{C})\to\pi_0(\EuScript{D})$ is essentially surjective, and
            \item for all $x,y\in\Ob(\EuScript{C})$, the map $F_{x,y} : \EuScript{C}(x,y)\to\EuScript{D}(Fx,Fy)$ is a Kan-Quillen equivalence.
        \end{itemize}
    \item Fibrations are {\sl Dwyer-Kan fibrations}, that is, functors $F : \EuScript{C}\to\EuScript{D}$ such that
        \begin{itemize}
            \item $\pi_0(F) : \pi_0(\EuScript{C})\to\pi_0(\EuScript{D})$ is an isofibration between categories, and
            \item for all $x,y\in\Ob(\EuScript{C})$, the map $F_{x,y} : \EuScript{C}(x,y)\to\EuScript{D}(Fx,Fy)$ is a Kan fibration.
        \end{itemize}
\end{itemize}

See \cite{B18} for more details.

Note that if $F$ happens to be  a bijection on objects, then $\pi_0(F)$ is a fortiori essentially surjective. This will be the case in our main result, so we will have to focus only on the second condition for DK equivalences.

For the next Proposition we use notation of Appendix \ref{S:wedge}.

\begin{prop}\label{adj_Sigma_Hom}
The following two functors form a Quillen adjunction
\[\xymatrix{
s\Set \ar@<1.3ex>[r]^-{\Sigma} \ar@<0.2ex>@{}[r]|-{\perp} & s\Cat_{*,*} \ar@<1ex>[l]^{\Hom},
}\]

where
\begin{itemize}
\item $s\Cat_{*,*}$ stands for the category of bipointed  (small) simplicial categories, with the model structure induced by the Bergner structure on $s\Cat$,
\item The model structure on $s\Set$ is the Joyal  structure,
\item $\Sigma(S)$ is the simplicial category with two objects $\alpha$ and $\omega$ and with only one non-trivial mapping space $\Hom(\alpha,\omega) = S$, and
\item $\Hom(\EuScript{C}_{x,y}) = \EuScript{C}(x,y)$.
\end{itemize}
\end{prop}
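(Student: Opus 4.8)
The plan is to establish the adjunction by a direct computation, and then verify the Quillen condition on the right adjoint $\Hom$ (one could equally well argue on the left adjoint $\Sigma$). First I would unwind the definitions: a morphism $\Sigma(S)\to\EuScript{C}_{x,y}$ in $s\Cat_{*,*}$ is a simplicial functor $\Sigma(S)\to\EuScript{C}$ carrying $\alpha$ to $x$ and $\omega$ to $y$. Since $\Sigma(S)$ has only identity endomorphisms, no morphism from $\omega$ to $\alpha$, and composition operations involving only those identities, such a functor is exactly the datum of a single simplicial map $S=\Sigma(S)(\alpha,\omega)\to\EuScript{C}(x,y)$, with no further compatibility imposed by functoriality. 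This yields a bijection $\Hom_{s\Cat_{*,*}}(\Sigma(S),\EuScript{C}_{x,y})\cong\Hom_{s\Set}(S,\EuScript{C}(x,y))=\Hom_{s\Set}(S,\Hom(\EuScript{C}_{x,y}))$, natural in $S$ and in $\EuScript{C}_{x,y}$, so $\Sigma\dashv\Hom$.

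Next I would recall, from Appendix \ref{S:wedge}, that a morphism of $s\Cat_{*,*}$ is a (trivial) fibration precisely when its underlying simplicial functor is a (trivial) Bergner fibration, the model structure on $s\Cat_{*,*}$ being the coslice structure of $s\Cat$ under the discrete simplicial category on $\{\alpha,\omega\}$. Let $F\colon\EuScript{C}_{x,y}\to\EuScript{D}_{u,v}$ be such a map, so the underlying $F\colon\EuScript{C}\to\EuScript{D}$ is a Bergner fibration; since $F$ preserves the chosen objects, $\Hom(F)$ is precisely the component $F_{x,y}\colon\EuScript{C}(x,y)\to\EuScript{D}(u,v)$, which by definition of a Bergner fibration is a Kan fibration, hence a fibration in the Joyal model structure on $s\Set$. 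If moreover $F$ is a trivial Bergner fibration, then $F_{x,y}$ is a trivial Kan fibration, which is also a trivial fibration for the Joyal structure, since both structures on $s\Set$ have the monomorphisms as cofibrations and hence the same trivial fibrations. Therefore $\Hom$ is right Quillen and $(\Sigma,\Hom)$ is a Quillen adjunction.

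The only non-formal ingredient is the fact that every Kan fibration is a Joyal fibration -- equivalently, that every Joyal weak equivalence is a Kan--Quillen weak equivalence, i.e. that the identity is a left Quillen functor from the Joyal to the Kan--Quillen model structure on $s\Set$; this comparison of the two structures is standard, and it is the sole external input, everything else being bookkeeping with the definitions. For a check on the left adjoint instead, one uses that $\Sigma$ preserves colimits and sends each boundary inclusion $\partial\Delta^n\hookrightarrow\Delta^n$ to one of Bergner's generating cofibrations of $s\Cat$, so $\Sigma$ preserves cofibrations; and since $\Sigma(f)$ is the identity on objects with $f$ as its only non-trivial mapping-space component, the same comparison of model structures on $s\Set$ shows that $\Sigma$ carries Joyal-trivial cofibrations to Bergner-trivial cofibrations.
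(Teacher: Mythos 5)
Your argument is correct and matches the paper's (one-line) proof, which simply says ``We check that $\Hom$ is right Quillen'': you supply exactly that verification, with the adjunction isomorphism $\Hom_{s\Cat_{*,*}}(\Sigma(S),\EuScript{C}_{x,y})\cong\Hom_{s\Set}(S,\EuScript{C}(x,y))$ and the correct identification of the sole non-formal input, namely that Kan fibrations are Joyal fibrations (equivalently, that the two structures on $s\Set$ share their trivial fibrations while Joyal weak equivalences are Kan--Quillen weak equivalences). The closing check on $\Sigma$ is a nice redundancy but not needed.
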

\begin{proof}
We check that $\Hom$ is right Quillen.
\end{proof}

The simplicial rigidification functor $\C^{\Delta} : s\Set \to s\Cat$ is obtained as a left Kan extension along the Yoneda functor. On the 
representables, $\C^{\Delta}$ is defined 
as follows: 
\begin{itemize}
\item $\Ob(\C^{\Delta}(\Delta^n)) = \{0,\ldots,n\}$.
\item For $i\leq j$, $\C^{\Delta}(\Delta^n)(i,j)$ is the nerve of the poset $\mathcal{P}(]i,j[)$ where $]i,j[$ is the set $\{i+1,i+2,\ldots,j-1\}$. The poset structure is given by subset inclusion. 
Note that this is the one point simplicial set if $j=i$ or $j=i+1$. For $i>j$,  $\C^{\Delta}(\Delta^n)(i,j)=\emptyset$.

\item Composition  $N(\mathcal{P}(]j,k[)) \times N(\mathcal{P}(]i,j[)) \rightarrow  N(\mathcal{P}(]i,k[))$ is induced by the function mapping 
$Y,X$ to $X\cup \{j\}\cup Y$.
\end{itemize}

The nerve functor $N:\Cat\rightarrow s\Set$ being monoidal, it induces a functor from categories enriched in categories to categories enriched in simplicial sets (see \cite[Chapter 3]{Rie14} for basics on enriched category theory).
We also call this functor the nerve functor and denote it by $N$. In particular, the simplicial category $\C^{\Delta}(\Delta^n)$ is obtained as the nerve of 
a poset-enriched category.

\begin{rmk}\label{R:objets} The simplicial rigidification functor is built by left Kan extension and so is cocontinuous, which implies in particular that the set of objects of $\C^{\Delta}(X)$ is in bijection with $X_0$. This is a general fact. Indeed, for a functor $F:I\rightarrow s\Cat$, the set of objects of the simplicial category $\colim F$ is in bijection with the colimit (in $\Set$) of the object functor $Ob\circ F:I\rightarrow \Set$, since 
the object functor is cocontinuous:  it is  left adjoint to the codiscrete functor $\Set \stackrel{coDisc}{\to} s\Cat$ sending 
a set $X$ to the simplicial category whose set of objects is $X$ and whose simplicial set of morphisms between any two objects is $\Delta^0$.
\end{rmk}

\begin{thm}[\cite{B18}] \label{sSet-sCat-quillen-equivalence}
The functor $\C^{\Delta} : s\Set \to s\Cat$ is the left adjoint of a Quillen equivalence between the Joyal model structure on $s\Set$ and the Bergner model structure on $s\Cat$.
\end{thm}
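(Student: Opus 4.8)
Since the simplicial statement is the blueprint for the cubical one, the plan is to recall the argument of Dugger--Spivak~\cite{DS11} (itself a streamlining of \cite{J2,L}); Sections~\ref{necklace-section}--\ref{main-section} will reproduce its skeleton in the cubical world. Being a left Kan extension along the Yoneda embedding, $\C^{\Delta}$ is cocontinuous and hence has a right adjoint $\mathfrak{N}\colon s\Cat\to s\Set$, the homotopy coherent nerve, given by $\mathfrak{N}(\EuScript{D})_n=\Hom_{s\Cat}(\C^{\Delta}(\Delta^n),\EuScript{D})$.

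To see that $\C^{\Delta}\dashv\mathfrak{N}$ is a Quillen adjunction I would check that $\C^{\Delta}$ preserves cofibrations and acyclic cofibrations, all objects of $s\Set$ being Joyal-cofibrant. The Joyal cofibrations are the monomorphisms and $\C^{\Delta}$ is cocontinuous, so it suffices to treat the boundary inclusions $\partial\Delta^n\hookrightarrow\Delta^n$: a computation of the relevant colimits shows that $\C^{\Delta}(\partial\Delta^n)\to\C^{\Delta}(\Delta^n)$ is bijective on objects, an isomorphism on every mapping space except $(0,n)$, and on $(0,n)$ is the boundary inclusion of the cube $N\mathcal{P}(\,]0,n[\,)\cong(\Delta^1)^{\times(n-1)}$. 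Since $\Hom(i,0)=\emptyset$ for $i\neq 0$ and $\Hom(n,j)=\emptyset$ for $j\neq n$, enlarging the $(0,n)$-mapping space creates no new composites, so this map is the pushout of $\Sigma(\partial(N\mathcal{P}(\,]0,n[\,)))\hookrightarrow\Sigma(N\mathcal{P}(\,]0,n[\,))$ and hence a cofibration by Proposition~\ref{adj_Sigma_Hom}. For acyclic cofibrations I would likewise show that $\C^{\Delta}$ carries the inner horn inclusions $\Lambda^n_k\hookrightarrow\Delta^n$ (for $0<k<n$) and the inclusion $\{0\}\hookrightarrow J$ to acyclic cofibrations: the same computation shows that $\C^{\Delta}(\Lambda^n_k)\to\C^{\Delta}(\Delta^n)$ is bijective on objects, an isomorphism on all mapping spaces but $(0,n)$, and on $(0,n)$ is the inclusion into $N\mathcal{P}(\,]0,n[\,)$ of a ``cubical horn'' --- the cube's boundary with one open facet deleted --- which deformation retracts onto the opposite facet, so the map is a monomorphism between contractible simplicial sets and therefore a weak homotopy equivalence; the case of $J$ is elementary. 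The one genuinely delicate formal point is to upgrade these verifications to ``$\C^{\Delta}$ is left Quillen'': unlike the Kan--Quillen structure, the Joyal structure has no transparent generating set of acyclic cofibrations, and one argues as in \cite{L,DS11}. In particular $\mathfrak{N}$ is right Quillen, so it carries locally Kan simplicial categories to quasi-categories.

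The Quillen adjunction being in place, the equivalence follows once the derived counit $\C^{\Delta}(\mathfrak{N}\EuScript{D})\to\EuScript{D}$ is shown to be a Dwyer--Kan equivalence for every fibrant $\EuScript{D}$ (no cofibrant replacement being needed, all simplicial sets being cofibrant) and the derived unit $X\to\mathfrak{N}(R\C^{\Delta}X)$ a Joyal equivalence for every $X$, where $R$ denotes a fibrant replacement in $s\Cat$. Both are bijective on objects (cf.\ Remark~\ref{R:objets}), so both reduce to a single statement about mapping spaces: that for \emph{every} simplicial set $X$ and vertices $a,b$, the simplicial set $\C^{\Delta}(X)(a,b)$ is a model for the derived mapping space of $X$ from $a$ to $b$. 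Indeed, for the counit this gives that $\C^{\Delta}(\mathfrak{N}\EuScript{D})(a,b)\to\EuScript{D}(a,b)$ is a weak homotopy equivalence when $\EuScript{D}$ is fibrant, and for the unit that $X\to\mathfrak{N}(R\C^{\Delta}X)$ becomes, on mapping spaces, the weak equivalence induced by the fibrant replacement $\C^{\Delta}X\to R\C^{\Delta}X$.

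The hard part is precisely this last statement --- homotopical control over $\C^{\Delta}(X)(a,b)$, which is defined by an opaque iterated colimit. This is what the necklace calculus of \cite{DS11} supplies: one identifies $\C^{\Delta}(X)(a,b)$, up to weak homotopy equivalence, with the nerve of the category of necklaces over $X$ running from $a$ to $b$, and then, when $X=\mathfrak{N}\EuScript{D}$ with $\EuScript{D}$ fibrant, recognises such a necklace as a chain of composable morphisms-up-to-homotopy that the Kan condition on the mapping spaces of $\EuScript{D}$ allows one to strictify, producing the weak equivalence with $\EuScript{D}(a,b)$. This is exactly the analysis that the remainder of the paper transports to the cubical setting, where the role of the cube $N\mathcal{P}(\,]0,n[\,)$ is played by the weak Bruhat order on the symmetric group.
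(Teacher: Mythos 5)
The paper offers no proof of this theorem: it is quoted from \cite{B18} (and ultimately from Joyal, Lurie and Dugger--Spivak), so there is no internal argument to compare yours with. Your sketch is an accurate outline of the proof in those sources --- computing $\C^{\Delta}$ on boundary and inner-horn inclusions as pushouts along $\Sigma$ of the cube-boundary and open-box inclusions of $N\mathcal{P}(\,]0,n[\,)$, assembling these into a Quillen adjunction (the clean way being Joyal's criterion, Proposition \ref{Joyal_result}, which circumvents the missing explicit generating set of Joyal trivial cofibrations that you rightly worry about), and reducing the equivalence to homotopical control of $\C^{\Delta}(X)(a,b)$ via necklaces --- and it is exactly the template the paper transports to cubes in Propositions \ref{C_preserves_cof}, \ref{C_hn}, \ref{N_preserves_fib} and Theorem \ref{T:Ct_with_neck}. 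The only steps you genuinely leave to the literature (the strictification of necklaces in $\mathfrak{N}\EuScript{D}$ for fibrant $\EuScript{D}$, and the treatment of the derived unit, where a categorical equivalence out of a non-fibrant $X$ is not detected naively on mapping spaces) are precisely the hard parts of \cite{DS11} and \cite{L}, which is an appropriate level of detail for a cited result.
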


\subsection{Cubical quasi-categories}  \label{S:Q}

We  next present  the material of  \cite{DKLS} needed for our purposes.
There are different notions of cubical sets depending on whether one considers all or part of the negative and positive connections, the diagonals, and the symmetries.
We follow \cite{DKLS} in using  the negative connections only, and we denote this category by $\square$.

The category $\square$ is the subcategory of the category of posets whose objects  are $[1]^n, n\geq 0,$  and whose  morphisms are generated by
 \begin{itemize}
\item the faces $\partial^n_{i,\epsilon}: [1]^{n-1}\rightarrow [1]^n$ ( $1\leq i\leq n, 
\epsilon\in\{0,1\}$), consisting in inserting $\epsilon$ at the $i$-coordinate,
\item the degeneracies $\sigma_i^n: [1]^n\rightarrow [1]^{n-1}$  ($1\leq i\leq n$), consisting in forgetting the $i$-coordinate, and
\item the negative connections $\gamma_{i,0}^n:[1]^n\rightarrow [1]^{n-1}$ ($1\leq i\leq n-1$), mapping $(x_1,\ldots,x_n)$ to $
(x_1,\ldots,x_{i-1},\max(x_i,x_{i+1}),x_{i+2},\dots,x_n)$.
\end{itemize}

Adapting Theorem 5.1 of 
Grandis and Mauri in \cite{GM03} (see also \cite{Mal09}) to our case, we have that
every map in the category $\square$ can be factored uniquely as a composite
\[(\partial_{c_1,\epsilon_1}\ldots\partial_{c_r,\epsilon_r})(\gamma_{b_1,0}\ldots\gamma_{b_q,0})(\sigma_{a_1}\ldots\sigma_{a_p})\]
with $1\leq a_1<\ldots<a_p,\; 1\leq b_1<\ldots<b_q,\;  c_1>\ldots>c_r\geq 1$. 

In particular, it factors uniquely as an epimorphism followed by a monomorphism. 
Relying on this factorisation, one can give an alternative presentation of $\square$ by generators (as above) and relations given by  cubical 
identities, as listed in \cite{DKLS} just before Proposition 1.16.

The category of presheaves on $\square$ is called the category of cubical sets and denoted by $c\Set$. The representable presheaves are 
denoted by $\square^n$, and are called the {\sl $n$-cubes}.  

In addition, the 
factorisation of Grandis and Mauri in $\square$ induces 
the existence of the standard form of an $n$-cube $x$ in a cubical set $S$. We recollect here Proposition 1.18, and Corollaries 1.19 and 
1.20, of \cite{DKLS}, where, as usual, ``nondegenerate'' stands for ``not in the image of a degeneracy nor of a connection''.

\begin{prop}\label{P:NDfactorisation} Let $S,T$ be two cubical sets. 
\begin{enumerate}
\item For any $n$-cube $x:\square^n\rightarrow S,$ there exists a unique decomposition $x=y\circ \varphi$, where  $\varphi:
\square^n\rightarrow\square^m$ is an epimorphism and $y : \square^m\to S$ is a nondegenerate $m$-cube.
\item Any map  $\varphi:S\rightarrow T$ in $c\Set$ is determined by its action on nondegenerate cubes.
\item A map  $\varphi:S\rightarrow T$ is a monomorphism if and only if it maps nondegenerate cubes of $S$ to nondegenerate cubes of 
$T$ and does so injectively.
\end{enumerate}
\end{prop}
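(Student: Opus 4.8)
The plan is to prove item (1) as an Eilenberg--Zilber lemma for $c\Set$ and to deduce items (2) and (3) formally from it. For the \emph{existence} part of (1), given an $n$-cube $x : \square^n \to S$ I would consider the factorisations $x = z \circ \psi$ in which $\psi$ is an epimorphism of $\square$ --- there is always the trivial one, $x = x \circ \id_{\square^n}$ --- and pick one whose intermediate cube $z : \square^m \to S$ has $m$ minimal. If $z$ were degenerate it would factor as $z = z' \circ g$ through a single degeneracy or connection $g$, whence $x = z' \circ (g \circ \psi)$ with $g \circ \psi$ again an epimorphism of $\square$ (by the Grandis--Mauri normal form, the epimorphisms of $\square$ are exactly the composites of degeneracies and connections), contradicting minimality of $m$. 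So $z$ is nondegenerate and $(z, \psi)$ is the desired decomposition.

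For the \emph{uniqueness} part of (1), I would first record three properties of $\square$, all immediate from the normal form: (i) every epimorphism of $\square$ splits --- a section of $\sigma_i$ is any $\partial_{i,\epsilon}$, a section of $\gamma_{i,0}$ is $\partial_{i,0}$, and sections compose; (ii) a monomorphism of $\square$ never lowers dimension, and the only monomorphism $\square^m \to \square^m$ is the identity --- equivalently, $\square$ has no non-trivial automorphism, since the symmetries have been left out; and (iii) an epimorphism of $\square$ is determined by the set of its sections. Now suppose $y \circ \varphi = y' \circ \varphi' = x$ with $\varphi : \square^n \to \square^m$ and $\varphi' : \square^n \to \square^{m'}$ epimorphisms and $y, y'$ nondegenerate. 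Choosing a section $s$ of $\varphi$, we have $y = x \circ s = y' \circ (\varphi' \circ s)$; factoring $\varphi' \circ s : \square^m \to \square^{m'}$ as an epimorphism followed by a monomorphism, the epimorphic first factor must be the identity, for otherwise $y$ would factor through a degeneracy or connection and hence be degenerate. Thus $\varphi' \circ s$ is a monomorphism, so $m \le m'$ by (ii); by symmetry $m' \le m$, hence $m = m'$, hence $\varphi' \circ s = \id_{\square^m}$ by (ii) again and $y = y'$. Running this argument over all sections $s$ of $\varphi$ shows that $\varphi$ and $\varphi'$ have the same set of sections, so $\varphi = \varphi'$ by (iii).

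Item (2) is then immediate: by (1) every cube of $S$ has the form $y \circ \varphi$ with $y$ nondegenerate, and functoriality of a map $f : S \to T$ gives $f(y \circ \varphi) = f(y) \circ \varphi$, so $f$ is determined by its restriction to nondegenerate cubes. For item (3), recall that a map of cubical sets is a monomorphism if and only if it is injective in every dimension. If $f$ carries nondegenerate cubes to nondegenerate cubes injectively and $f(x) = f(x')$, write $x = y \circ \varphi$ and $x' = y' \circ \varphi'$ with $y, y'$ nondegenerate; then $f(y) \circ \varphi = f(y') \circ \varphi'$ are two decompositions as in (1) of a single cube of $T$, so uniqueness forces $\varphi = \varphi'$ and $f(y) = f(y')$, whence $y = y'$ and $x = x'$. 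Conversely, if $f$ is a monomorphism it is obviously injective on nondegenerate cubes, and if a nondegenerate $y : \square^m \to S$ had $f \circ y = w \circ \rho$ with $\rho : \square^m \to \square^k$ a non-identity epimorphism, then for a section $s$ of $\rho$ one computes $f \circ (y \circ s \circ \rho) = w \circ \rho \circ s \circ \rho = w \circ \rho = f \circ y$, so $y \circ s \circ \rho = y$ by injectivity of $f$; writing $\rho = \rho_0 \circ g$ with $g$ a degeneracy or connection, the equality $y = (y \circ s \circ \rho_0) \circ g$ exhibits $y$ as degenerate --- a contradiction.

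The step I expect to be the real obstacle is property (iii), \ie the fact that $\square$ is an \emph{elegant} (Eilenberg--Zilber) category. Concretely, the normal form encodes an epimorphism $[1]^n \to [1]^m$ of $\square$ as a choice of a subset of ``deleted'' coordinates together with a weakly monotone surjection of the remaining coordinates onto $\{1, \dots, m\}$, with $\max$ taken over each fibre; one must show that this datum is reconstructible from the family of all its sections. This is precisely where the exclusion of symmetries is essential --- with symmetries present one would recover $\varphi$ only up to a coordinate permutation, and the decomposition in (1) only up to isomorphism --- and it is the sole genuinely combinatorial ingredient; everything else is bookkeeping over the normal form, and items (2) and (3) are then purely formal.
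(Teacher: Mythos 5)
The paper itself offers no proof of this Proposition: it is quoted verbatim from \cite{DKLS} (Proposition 1.18 and Corollaries 1.19--1.20 there), so there is nothing internal to compare against. Your argument is the standard Eilenberg--Zilber argument, and it is correct: existence by minimality of the intermediate dimension, uniqueness via splittings of epimorphisms together with the facts that monomorphisms of $\square$ are exactly the composites of faces (so the only mono $\square^m\to\square^m$ is the identity), and that an epimorphism of $\square$ is determined by its set of sections; (2) and (3) then follow formally exactly as you say.

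You are right that property (iii) is the only genuinely combinatorial point, and it does hold; to close the gap, here is the verification in the terms you set up. By the Grandis--Mauri normal form, an epimorphism $\varphi:[1]^n\to[1]^m$ is encoded by a subset $A\subseteq\{1,\dots,n\}$ of deleted coordinates together with an ordered partition of $\{1,\dots,n\}\setminus A$ into $m$ blocks $B_1<\dots<B_m$ (intervals for the induced order), with $\varphi(x)_k=\max_{j\in B_k}x_j$. A composite of faces $s:[1]^m\to[1]^n$ satisfies $\varphi\circ s=\mathrm{id}$ if and only if $s$ routes the $k$-th input to exactly one position of $B_k$, puts the constant $0$ on the remaining positions of $B_1\cup\dots\cup B_m$, and puts arbitrary constants on $A$. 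Hence from the set of sections one recovers $A$ (as the set of positions at which some section takes the constant value $1$) and each $B_k$ (as the set of positions to which some section routes the $k$-th input), so $\varphi$ is determined. This is indeed where the absence of symmetries matters, as you note. One cosmetic remark: in the uniqueness step, running over all sections of $\varphi$ only shows that every section of $\varphi$ is a section of $\varphi'$; you need the symmetric run over sections of $\varphi'$ to get equality of the two sets before invoking (iii) --- but that symmetry is already implicit in your argument.
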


\begin{defn}\label{order_ncube}A {\it vertex} of a cubical set $S$ is an element of $S_0$ (where $S_0 = S([1]^0)$) and the vertices of 
$\square^n$ are in one-to-one 
correspondence with the $n$-tuples $(a_1,\ldots,a_n)$ of $[1]^n$, or equivalently with the subsets of $\{1,\ldots,n\}$.
In the paper, we will use either point of view, depending on the context.

The order of $[1]^n$ induces an order $\preccurlyeq$ on  the vertices of $\square^n$:
\[(a_1,\ldots,a_n)\preccurlyeq (b_1,\ldots,b_n) \Longleftrightarrow a_i\leq b_i,\; \forall 1\leq i\leq n.\]
It is isomorphic to the subset lattice of $\{1,\ldots,n\}$ via $(a_1,\ldots,a_n)\mapsto \{i\; |\; a_i=1\}$.  Hence it has a least element  
$\alpha=\emptyset$ and a greatest element $\omega=\{1,\ldots,n\}$ (or $\alpha=(0,\ldots,0)$ and $\omega=(1,\ldots,1)$).\\
For $a\preccurlyeq b$,  let  $d(a,b)$ be the cardinal of $b\setminus a$  and let
$\iota^n_{a,b}$  be the face map
$\square^{d(a,b)} \hookrightarrow \square^n$ satisfying $\iota^n_{a,b}(\alpha)=a$ and $\iota^n_{a,b}(\omega)=b$ (see Lemma \ref{L:distance}).
\end{defn}

\begin{ex}If $n=5$, $a=(1,0,0,0,0)$, $b=(1,0,1,0,1)$, then $b\setminus a=\{3,5\}$, and $\iota^5_{a,b}:\square^2\rightarrow \square^5$ is given by
$\iota^5_{a,b}(x,y)=(1,0,x,0,y)$. 
\end{ex}

\begin{lem}\label{L:distance} A map $\varphi:\square^n\rightarrow\square^m$ satisfies $d(\varphi(\alpha),\varphi(\omega))\leq n$. The map $\varphi$ is a monomorphism if and 
only if $d(\varphi(\alpha),\varphi(\omega))= n$, and in this case $\varphi$ is determined by $\varphi(\alpha)$ and $\varphi(\omega)$.
 In particular, if $n=1$, then  $\varphi(\alpha)=\varphi(\omega)$ or $\varphi(\omega)\setminus\varphi(\alpha)=\{i\}$ for some $i$.
\end{lem}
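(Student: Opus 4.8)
The plan is to reduce everything to the Grandis--Mauri normal form recalled above. By the Yoneda lemma a map $\varphi\colon\square^n\to\square^m$ of cubical sets is the same as a morphism $[1]^n\to[1]^m$ in $\square$; in particular it is order-preserving, so $\varphi(\alpha)\preccurlyeq\varphi(\omega)$ and $d(\varphi(\alpha),\varphi(\omega))$ is defined. Write $\varphi=\delta\circ\varepsilon$, where $\varepsilon=(\gamma_{b_1,0}\cdots\gamma_{b_q,0})(\sigma_{a_1}\cdots\sigma_{a_p})\colon[1]^n\to[1]^k$ is the epimorphism part, with $k:=n-p-q$, and $\delta=\partial_{c_1,\epsilon_1}\cdots\partial_{c_r,\epsilon_r}\colon[1]^k\to[1]^m$ is the monomorphism part, a composite of $r=m-k$ faces with $c_1>\cdots>c_r\geq 1$.

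Next I would record two elementary observations. First, each degeneracy $\sigma_i$ and each negative connection $\gamma_{i,0}$ sends the bottom vertex to the bottom vertex and the top vertex to the top vertex (immediate from the formulas, using $\max(0,0)=0$ and $\max(1,1)=1$), hence $\varepsilon(\alpha)=\alpha$ and $\varepsilon(\omega)=\omega$ in $[1]^k$. Second, for any face $\partial_{i,\epsilon}$ and any $u\preccurlyeq w$, inserting the same value $\epsilon$ at coordinate $i$ in both vertices does not change the size of the difference set, so $d(\partial_{i,\epsilon}u,\partial_{i,\epsilon}w)=d(u,w)$; by induction $d(\delta u,\delta w)=d(u,w)$ for all $u\preccurlyeq w$ in $[1]^k$. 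Combining these,
\[
d(\varphi(\alpha),\varphi(\omega))=d\bigl(\delta(\varepsilon\alpha),\delta(\varepsilon\omega)\bigr)=d(\delta\alpha,\delta\omega)=d(\alpha,\omega)=k=n-p-q\le n,
\]
with equality exactly when $p=q=0$. By uniqueness of the epi--mono factorisation, $p=q=0$ holds precisely when the epi part $\varepsilon$ is trivial, i.e.\ when $\varphi=\delta$ is a composite of face maps, i.e.\ when $\varphi$ is a monomorphism; this yields the inequality and the second assertion. (The inequality alone also follows more quickly from the observation that every generator, hence every morphism, of $\square$ takes a covering relation to a covering relation or an equality, so $\varphi$ sends a maximal chain $\alpha\prec\cdots\prec\omega$ of length $n$ to a chain from $\varphi(\alpha)$ to $\varphi(\omega)$ with at most $n$ strict steps.)

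It then remains to see that a monomorphism $\varphi=\delta=\partial_{c_1,\epsilon_1}\cdots\partial_{c_r,\epsilon_r}$ (now $k=n$, $r=m-n$, $c_1>\cdots>c_r$) is determined by $\varphi(\alpha)$ and $\varphi(\omega)$. The strictly decreasing condition on the $c_j$ is exactly what makes this composite the standard embedding of a sub-$n$-cube: with $F:=\{1,\dots,m\}\setminus\{c_1,\dots,c_r\}$ the set of ``free'' coordinates, $\varphi$ sends $(x_1,\dots,x_n)$ to the $m$-tuple carrying the $x_j$'s, in order, on $F$ and $\epsilon_j$ at position $c_j$. Hence $\varphi(\alpha)$ and $\varphi(\omega)$ share the value $\epsilon_j$ at each $c_j\notin F$ and differ at every position of $F$, so $F=\varphi(\omega)\setminus\varphi(\alpha)$ (as subsets of $\{1,\dots,m\}$) and the $\epsilon_j$ are read off as the common coordinates of $\varphi(\alpha)$ and $\varphi(\omega)$; this recovers the data $(\{c_1>\cdots>c_r\},(\epsilon_j)_j)$, hence $\varphi$. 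Conversely, given any $a\preccurlyeq b$ this standard inclusion on the coordinates $b\setminus a$ realises the prescribed pair, so $\varphi$ is the unique monomorphism $\square^n\hookrightarrow\square^m$ with given images of $\alpha$ and $\omega$ and the map $\iota^n_{a,b}$ of Definition~\ref{order_ncube} is well defined. Finally, for $n=1$ we have $d(\varphi(\alpha),\varphi(\omega))\le 1$, and since $\varphi(\alpha)\preccurlyeq\varphi(\omega)$ this forces either $\varphi(\alpha)=\varphi(\omega)$ or $\varphi(\omega)\setminus\varphi(\alpha)=\{i\}$ for a unique $i$.

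I expect no genuine obstacle here; the work is bookkeeping. The two points to be careful about are: keeping the direction of composition in the Grandis--Mauri normal form straight (degeneracies and connections first, faces last), and, in the last step, checking honestly that the condition $c_1>\cdots>c_r$ forces the composite of faces to be the ``symmetry-free'' standard sub-cube inclusion, so that it really is reconstructible from its two extreme vertices — one must resist using coordinate permutations, which are not morphisms of $\square$.
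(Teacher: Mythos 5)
Your proof is correct and follows essentially the same route as the paper's: decompose $\varphi$ via the Grandis--Mauri epi--mono factorisation, note that the epi part preserves $\alpha,\omega$ while strictly decreasing dimension and that faces preserve $d$, and conclude that a composite of faces is recovered from the images of $\alpha$ and $\omega$. Your final step merely spells out in more detail the paper's one-line claim that a composition of faces is determined by its values on $\alpha$ and $\omega$.
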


\begin{proof} We decompose $\varphi=u\circ v$ with $v:\square^n\rightarrow\square^p$ a composition of degeneracies and connexions and $u:\square^p\rightarrow\square^m$ 
a composition of faces. We have $p\leq n$ and $v(\alpha)=\alpha, v(\omega)=\omega$.
A composition of faces inserts some $0$ and $1$ at some places and thus lets the distance between two vertices invariant. In particular $d(u(\alpha),u(\omega))=p\leq n$. 
In addition, since degeneracies and connections  always decrease $d(\alpha,\omega)$  strictly, we get that $\varphi$ is a monomorphism if and only if $\varphi$ is a composition of faces, 
if and only if  $d(\varphi(\alpha),\varphi(\omega))= n$. A face is uniquely determined by its value on $\alpha$ and $\omega$, so is a composition of faces. The second part of 
the statement is immediate.\end{proof}

We next recall two  model category structures on cubical sets. The first one, the Grothendieck model structure, models homotopy types and is described by Cisinski 
in \cite{Cin06}, and the second one models $(\infty,1)$-categories and is described in \cite{DKLS}.

We  recall  some useful definitions of \cite[Section 4]{DKLS}. 
\begin{defn}\label{D:boiteinterne}  \hspace{1cm}
\begin{itemize}
\item
The boundary of $\square^n$, that is, the union of all the faces of $\square^n$, is denoted by $\partial\square^n$ and the canonical inclusion by  
$\partial^n:\partial\square^n\rightarrow \square^n$.
\item
The union of all the faces except $\partial_{i,\epsilon}$ is denoted by $\sqcap^n_{i,\epsilon}$, and the inclusion $\sqcap^n_{i,\epsilon}\rightarrow \square^n$ is called an {\it  open box inclusion}. 
\item
Given a face $\partial_{i,\epsilon}$ of $\square^n$, its {\sl critical edge} $e_{i,\epsilon}$ is the unique edge of $\square^n$ that is adjacent to $\partial_{i,\epsilon}$ and 
contains the vertex $\alpha$ or $\omega$ which is not in $\partial_{i,\epsilon}$. Namely, this is the edge between the vertices $(1-\epsilon,\ldots,1-\epsilon)$ and 
$(1-\epsilon,\ldots,1-\epsilon,\epsilon,1-\epsilon,\ldots,1-\epsilon)$, where $\epsilon$ is placed at the i-coordinate. Equivalently, for $\epsilon=1$, this is the edge from 
$\alpha$ to $\{i\}$ and if $\epsilon=0$ this is the edge from $\{1,\ldots,n\}\setminus \{i\}$ to $\omega$.
\item
For $n\geq 2$, quotienting by the critical edge results in the {\sl $(i,\epsilon)$-inner cube} $\widehat{\square}^n_{i,\epsilon}$, the {\sl $(i,\epsilon)$-inner open box} 
$\widehat{\sqcap}^n_{i,\epsilon} $, and the {\sl $(i,\epsilon)$-inner open box inclusion} $h^n_{i,\epsilon} : \widehat{\sqcap}^n_{i,\epsilon} \to \widehat{\square}^n_{i,\epsilon}$.
\item
A (cubical) {\sl Kan fibration} is a map having the right lifting property with respect to all open box inclusions.
\item
A (cubical) {\sl inner fibration} is a map having the right lifting property with respect to all inner open box inclusions.
\item A {\sl cubical quasi-category} is a cubical set $X$ such that $X\rightarrow *$ is an inner fibration.
\end{itemize}
\end{defn}
\begin{thm}[Cisinski, Theorem 1.34 in \cite{DKLS}] The category $c\Set$ carries a cofibrantly generated model structure, refered to as the Grothendieck model structure, in which
\begin{itemize}
\item cofibrations are the monomorphisms, and
\item fibrations are Kan fibrations.
\end{itemize}
\end{thm}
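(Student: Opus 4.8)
The statement is due to Cisinski, and the plan is to obtain it as an instance of his general existence theorem for cofibrantly generated model structures on presheaf categories~\cite{Cin06}. Since $c\Set=\widehat{\square}$ is a presheaf topos, it suffices to equip it with an \emph{exact cylinder} together with a compatible class of \emph{anodyne extensions}, and then to read off what the resulting model structure looks like.

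First I would build the cylinder. I take the interval to be the $1$-cube $\square^1$, together with its two vertex inclusions $\partial^1_{1,0},\partial^1_{1,1}\colon\square^0\to\square^1$ and the degeneracy $\sigma^1_1\colon\square^1\to\square^0$. As $c\Set$ is cartesian closed, the endofunctor $C=\square^1\times(-)$, equipped with $\partial^1_{1,\epsilon}\times(-)$ and $\sigma^1_1\times(-)$, is a functorial cylinder, and the point to verify is that it is exact: it preserves small colimits (being, by cartesian closedness, a left adjoint) and monomorphisms, and the pushout-products of its two endpoint inclusions with an arbitrary monomorphism are again monomorphisms. This last point is combinatorial and rests on the description of nondegenerate cubes recalled in Proposition~\ref{P:NDfactorisation}.

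Next I would take as generating anodyne extensions the set of open box inclusions $\sqcap^n_{i,\epsilon}\hookrightarrow\square^n$ of Definition~\ref{D:boiteinterne}, and let $\mathsf{An}$ denote the class of maps they generate under pushout, transfinite composition and retract. Cisinski's axioms then require, besides the fact that $\mathsf{An}$ is the saturation of a set of monomorphisms (which is immediate), that $\mathsf{An}$ be stable under pushout-product with the endpoint inclusions of $C$ and with an arbitrary monomorphism; by the usual reductions this comes down to checking that the pushout-product of an open box inclusion with a boundary inclusion $\partial\square^m\hookrightarrow\square^m$ lies in $\mathsf{An}$. This cubical ``box-filling'' statement is the genuine combinatorial content of the theorem and is the step I expect to be the main obstacle; it is established in~\cite{Cin06} (see also~\cite{DKLS}), and is the cubical counterpart of the classical closure of horn inclusions under such pushout-products in $s\Set$.

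Granting these verifications, Cisinski's theorem provides a cofibrantly generated model structure on $c\Set$ whose cofibrations are exactly the monomorphisms and whose fibrations are exactly the maps having the right lifting property against $\mathsf{An}$. Since $\mathsf{An}$ is generated by the open box inclusions, this last class coincides with the maps having the right lifting property against all open box inclusions, that is, with the Kan fibrations of Definition~\ref{D:boiteinterne}. This is the announced description of the Grothendieck model structure.
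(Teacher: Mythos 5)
This theorem is recalled from the literature (Cisinski, via \cite{DKLS}) and the paper gives no proof of it, so the only question is whether your sketch would actually deliver the statement. The framework you choose --- Cisinski's existence theorem for a homotopical datum on a presheaf category --- is indeed the right one, but there is a genuine gap at your final step. Cisinski's general theorem does \emph{not} say that the fibrations of the resulting model structure are the maps with the right lifting property against the anodyne class $\mathsf{An}$. It says that the cofibrations are the monomorphisms, that the \emph{fibrant objects} are those with the right lifting property against $\mathsf{An}$, and that a map with fibrant codomain is a fibration if and only if it is a ``naive'' fibration (right lifting property against $\mathsf{An}$). Since anodyne extensions are trivial cofibrations but not conversely in general, one only gets that every fibration is a Kan fibration; the converse --- equivalently, that every trivial cofibration is anodyne --- is precisely the hard content of the theorem, the cubical analogue of Quillen's theorem that the trivial cofibrations of $s\Set$ are exactly the anodyne extensions. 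Cisinski proves this identification for cubical sets by a separate, substantive argument; your sketch asserts it as an automatic output of the existence theorem, which it is not.

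A second, more technical problem is your choice of cylinder. The construction uses $\square^1\otimes(-)$ for the monoidal product of the paper (the geometric product), not the cartesian product $\square^1\times(-)$. Both are exact cylinders, but Cisinski's compatibility axioms already require, for \emph{every} monomorphism $K\to L$, that the map $\square^1\otimes K\cup\{\epsilon\}\otimes L\to\square^1\otimes L$ be anodyne; the whole point of the geometric product is that, for $K\to L$ a boundary inclusion $\partial\square^n\to\square^n$, this map is literally an open box inclusion of $\square^{n+1}$ via $\square^1\otimes\square^n\cong\square^{n+1}$, and similarly the pushout--products with open box inclusions can be analysed inside representable cubes. With the cartesian cylinder, $\square^1\times\square^n$ is not a representable cube, these pushout--products are not open box inclusions, and you offer no argument (nor does the literature) that they lie in the saturation of the open box inclusions. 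So as written, the compatibility verification you defer to \cite{Cin06} is not the verification that reference actually performs; you should replace $\times$ by $\otimes$ throughout, and then still supply (or cite precisely) the argument that naive fibrations are fibrations.
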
 
We next sum up   Theorems 4.2  and 4.16 in \cite{DKLS} for the Joyal model structure.

\begin{thm} \label{Cset-Joyal}
The category $c\Set$ carries a cofibrantly generated model structure, refered to as the Joyal model structure, in which
\begin{itemize}
\item cofibrations are the monomorphisms, and
\item fibrant objects are cubical quasi-categories.
\end{itemize}
Moreover, fibrations between fibrant objects are inner fibrations having the right lifting property with respect to the two endpoint inclusions $j_0:\{0\}\rightarrow K$ and $j_1:\{1\}\to K$, where   
$K$ is the cubical set 
\[\xymatrix{1\ar[r]\ar@{=}[d]&0\ar[d]\ar@{=}[r]&0\ar@{=}[d]\\
1\ar@{=}[r]&1\ar[r]&0}
\]
\end{thm}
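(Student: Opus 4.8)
This statement is quoted from \cite{DKLS}, where it is obtained by combining Theorems 4.2 and 4.16; since the argument informs the later sections, we outline how it runs. The plan for the construction of the model structure is to invoke Cisinski's theory of model structures on presheaf categories, in the form that takes as input the monomorphisms as cofibrations, an exact cylinder, and a set of generating anodyne maps, and outputs a cofibrantly generated model structure in which the cofibrations are exactly the chosen ones and the fibrant objects are the objects right-orthogonal to the saturated class generated by the anodyne maps and the cylinder data. Here one takes the cylinder built from the interval $\square^1$, using the negative connection $\gamma_{1,0}$ to provide the contraction, and one takes the generating anodyne maps to be the inner open box inclusions $h^n_{i,\epsilon}:\widehat{\sqcap}^n_{i,\epsilon}\to\widehat{\square}^n_{i,\epsilon}$ of Definition \ref{D:boiteinterne}. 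Write $\mathcal{A}$ for the resulting saturated class, generated by the inner open box inclusions together with the Leibniz products of the two cylinder endpoints against monomorphisms.

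To identify the fibrant objects with the cubical quasi-categories, one inclusion is immediate since $\mathcal{A}$ contains the inner open box inclusions. For the converse one must check that every cubical quasi-category $X$ has $X\to *$ right-orthogonal to all of $\mathcal{A}$; because $\mathcal{A}$ is built from its generators by the Cisinski operations (cobase change, transfinite composition, retract, Leibniz product with monomorphisms, and Leibniz product against the cylinder), this reduces to two pushout-product lemmas: that the Leibniz product of an inner open box inclusion with a boundary inclusion $\partial\square^m\to\square^m$ again lies in $\mathcal{A}$, and that the inner-fibration condition together with the connection already supplies the cylinder-endpoint lifts. Both are proved by filtering a cube by its faces and inducting on dimension; the point of passing to the critical-edge quotient $\widehat{\square}^n_{i,\epsilon}$ is precisely to keep this filtration consistent in the presence of connections. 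I expect this combinatorial pushout-product statement to be the main obstacle.

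For the last part, $K$ is the cubical analogue of the nerve of the free-standing isomorphism $0\cong 1$: it is a contractible cubical quasi-category, and $j_0,j_1$ are monomorphisms, hence trivial cofibrations, so every fibration has the right lifting property against $j_0$ and $j_1$; with the previous paragraph this gives that a fibration between fibrant objects is an inner fibration right-orthogonal to $j_0$ and $j_1$. For the converse one enlarges the saturated class to $\mathcal{A}'$, generated now by the inner open box inclusions, $j_0$, $j_1$, and the cylinder data, and proves that $\mathcal{A}'$ is exactly the class of trivial cofibrations of the Joyal structure. This again rests on a Leibniz lemma --- that the class generated by the inner boxes together with $j_0$ and $j_1$ is closed under pushout-product with monomorphisms --- together with the verification that any map between cubical quasi-categories right-orthogonal to $\mathcal{A}'$ is already a fibration of the Joyal model structure; in \cite{DKLS} the latter passes through the cubical Gray tensor product and the comparison with marked cubical sets. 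Combining the two directions yields the stated characterisation.
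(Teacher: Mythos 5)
The paper offers no proof of this statement: it is explicitly presented as a summary of Theorems 4.2 and 4.16 of \cite{DKLS}, so treating it as a citation, as you do, is exactly what is expected, and most of your sketch of the \cite{DKLS} argument (Cisinski--Olschok machinery with monomorphisms as cofibrations, inner open box inclusions as the generating anodyne set, pushout-product lemmas, and the derivation of the fibration characterisation from the fact that $j_0$ and $j_1$ are trivial cofibrations together with a converse via special open boxes) is faithful to how that proof is organised.

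There is, however, one substantive error in the sketch: the choice of exact cylinder. You take the cylinder on $\square^1$, with the negative connection providing the contraction, and assert that inner fibrancy plus the connection ``already supplies the cylinder-endpoint lifts''. That cylinder is the one underlying the \emph{Grothendieck} model structure, and it cannot produce the Joyal one: in Cisinski's machinery the anodyne class necessarily contains the Leibniz products of \emph{both} endpoint inclusions of the cylinder with all monomorphisms, and only one endpoint of $\square^1$ is compatible with inner fibrancy. Concretely, the Leibniz product of $\{0\}\to\square^1$ with $\partial\square^1\to\square^1$ is the \emph{outer} open box inclusion $\sqcap^2_{1,1}\to\square^2$, whose critical edge is a generic edge of the box; filling such a box means producing, from edges $f,h,k$, an edge $g$ and a square witnessing $g\circ h\simeq k\circ f$, i.e.\ dividing by $h$, which a cubical quasi-category supports only when $h$ is invertible (the cubical nerve of the poset $\{0<1\}$ already fails). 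With your cylinder the fibrant objects would therefore come out as (roughly) the cubical Kan complexes rather than the cubical quasi-categories. The interval \cite{DKLS} actually use is $K$ itself, the cubical walking coherent isomorphism --- playing exactly the role of $J$ in Cisinski's construction of the simplicial Joyal structure --- and the essential nontrivial input is their special-open-box filling theorem, which shows that cubical quasi-categories do lift against the $K$-endpoint Leibniz products. (A minor point of attribution: the converse direction of the characterisation of fibrations between fibrant objects in \cite{DKLS} does not pass through marked cubical sets; those appear only in later work.)
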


We next recall the notion of equivalence and of special open box (see \cite[Section 4]{DKLS}).

\begin{defn}\label{D:special} Let $X$ be a cubical set. 
\begin{itemize}
\item An edge $f:\square^1\rightarrow X$ is an {\sl equivalence} if it factors through the inclusion of the middle edge 
$\square^1\rightarrow K$. 
\item For $n\geq 2, 1\leq i\leq n, \epsilon\in\{0,1\}$, a {\sl special open box} in $X$ is a map ${\sqcap}^n_{i,\epsilon}\rightarrow X$ which sends the critical edge $e_{i,\epsilon}$ to an 
equivalence.
\end{itemize}
\end{defn}
Intuitively, in reference to the above drawing of $K$, the definition of  equivalence says that $f$ has a left and and right inverse (the images of the nondegenerate horizontal edges), 
witnessed such by the images of the  two  2-cubes.

Finally, we collect results of \cite[Sections 5 and 6]{DKLS} on the comparison between cubical and simplicial models.
\begin{defn}
We can construct a monoidal product $\otimes:c\Set\times c\Set\rightarrow c\Set$ by taking the left Kan extension of the monoidal product on $\square$ given 
by $[1]^n\times [1]^m\mapsto [1]^{n+m}$ postcomposed with the Yoneda morphism. In particular $\square^n\otimes\square^m\cong \square^{n+m}.$
\end{defn}
Note that this monoidal product is not symmetric.

\smallskip
In \cite{DKLS} the authors provide four  different but analogous functors $s\Set\rightarrow c\Set$, each of them labelled by a face of the $2$-cube. We choose the one 
labelled by the face $\partial_{2,1}$ (corresponding to  $Q_{R,1}$ in \cite{DKLS}) and  denote it by  $Q$ throughout the paper. It is obtained as a left Kan extension of 
a functor $\Delta\rightarrow c\Set$, that we describe in the following definition.

\begin{defn}\label{D:Q} Let $n\geq 0$. The cubical set $Q^n$ 
is the quotient of the $n$-cube $\square^n$ obtained as the following pushout

\begin{tikzcd}[column sep=100pt]
(\square^0\otimes\square^{n-1}) \bigsqcup (\square^1\otimes\square^{n-2}) \bigsqcup \dots \bigsqcup (\square^{n-1}\otimes\square^0) \arrow{d} \arrow[dr, phantom, "\ulcorner", very near end]
\arrow{r}{(\partial_{1,1}, \partial_{2,1}, \dots, \partial_{n,1})} & \square^n \arrow{d}{\pi_n} \\
\square^{n-1} \bigsqcup \square^{n-2} \bigsqcup \dots \bigsqcup \square^0 \arrow{r} & Q^n 
\end{tikzcd}

The family $(Q^n)_{n\geq 0}$ assembles to a functor $Q:\Delta\rightarrow c\Set$ where faces and degeneracies are defined on the
 cubes $\square^n$ and compatible with the pushout. Namely,
\begin{itemize}
\item the $i$-th face $[n-1]\stackrel{d_i}{\longrightarrow}[n]$ is sent to $\square^{n-1}\stackrel{\partial_{i,0}}{\longrightarrow}\square^n$ if $i>0$ and to  $\partial_{1,1}$ if $i=0$, and
\item the $i$-th degeneracy $[n+1]\stackrel{s_i}{\longrightarrow}[n]$ is sent to $\square^{n+1}\stackrel{\gamma_{i,0}}{\longrightarrow}\square^n$ if $i>0$ and to $\sigma_1$ if $i=0$.
\end{itemize}
\end{defn}

\begin{lem}\label{L:vertofQ} The set of vertices of $Q^n$ is in bijection with the set $\{0,\ldots,n\}$ and the map $\pi_n$ sends $a\subseteq\{1,\ldots,n\}$ to $\sup a$, setting $\sup \emptyset = 0$.
Furthermore, the action of the faces and degeneracies on the vertices of $Q^n$ coincides with the action on the vertices on the simplicial set $\Delta^n$.
\end{lem}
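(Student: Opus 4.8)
The statement is about the combinatorial structure of $Q^n$, so I would prove it by directly analysing the pushout defining $Q^n$ together with the explicit description of $\pi_n$. The first step is to establish the claim about vertices and the map $\pi_n$. Since $Q^n$ is built as a pushout of cubical sets and taking vertices ($S \mapsto S_0$) is cocontinuous (as recalled in Remark \ref{R:objets}, it is a left adjoint), the set of vertices of $Q^n$ is the pushout in $\Set$ of the corresponding diagram of vertex sets. I would compute: the vertices of $\square^n$ are the subsets $a \subseteq \{1,\dots,n\}$; the face maps $\partial_{k,1}\colon \square^{k-1}\otimes\square^{n-k} \cong \square^{n-1} \to \square^n$ insert a $1$ in the $k$-th coordinate, and the corresponding leg of the pushout collapses $\square^{k-1}\otimes\square^{n-k}$ onto $\square^{n-1}$ via (the relevant identification with) a degeneracy/projection. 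Concretely, the equivalence relation on vertices generated by the pushout identifies two subsets $a, b \subseteq \{1,\dots,n\}$ whenever they differ only by elements strictly below $\sup a = \sup b$; the quotient is then classified by the value $\sup a \in \{0,1,\dots,n\}$, with the convention $\sup\emptyset = 0$. This shows simultaneously that $Q^n_0 \cong \{0,\dots,n\}$ and that $\pi_n(a) = \sup a$.

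For the second part — that faces and degeneracies act on $Q^n_0$ exactly as on $\Delta^n$ — I would use Definition \ref{D:Q}, which specifies these maps on the cubical models, and then trace through the effect on vertices via the identification $\pi_n$. For the face $d_i$ with $i > 0$, the claimed assignment is $\partial_{i,0}\colon \square^{n-1}\to\square^n$, inserting $0$ at coordinate $i$; on a subset $a \subseteq \{1,\dots,n-1\}$ this produces the subset $a' \subseteq \{1,\dots,n\}$ obtained by shifting up the indices $\geq i$, and one checks $\sup a' = d_i(\sup a)$ where $d_i\colon\{0,\dots,n-1\}\to\{0,\dots,n\}$ is the order-preserving injection missing $i$. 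For $d_0$, the assignment $\partial_{1,1}$ inserts a $1$ at coordinate $1$, so $\sup$ of the image is always $\geq 1$ and equals $d_0(\sup a)$, matching the coface missing $0$. The degeneracies are handled the same way: $\gamma_{i,0}$ and $\sigma_1$ induce on the $\sup$-labelled vertices precisely the codegeneracies of $\Delta^n$. Each of these is a short bookkeeping check with the $\max$/$\sup$ formula.

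**Main obstacle.** The genuinely delicate point is not the face/degeneracy bookkeeping but getting the vertex-level pushout right: I need to be careful that the left-hand leg of the pushout square — the map $\bigsqcup_k (\square^{k-1}\otimes\square^{n-k}) \to \bigsqcup_k \square^{n-1-k+1}$, wait, rather $\to \bigsqcup_j \square^j$ — is induced on each summand by the appropriate degeneracy (collapsing the $\square^{k-1}$-factor, or dually the $\square^{n-k}$-factor, according to the chosen face $\partial_{2,1}$ convention), so that the identification it forces on vertices is exactly "forget the coordinates strictly below the top occupied coordinate". I would verify this by spelling out, for each $k$, how $\square^{k-1}\otimes\square^{n-k}\cong\square^{n-1}$ sits inside $\square^n$ via $\partial_{k,1}$ and which degeneracy $\square^{n-1}\to\square^{k-1}$ or similar appears in the bottom row; then the colimit of vertex sets is computed as the set of equivalence classes, and the well-definedness and bijectivity of the $\sup$-labelling follows. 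Once this identification is pinned down, everything else is routine, and the compatibility with $\Delta^n$ is immediate from Definition \ref{D:Q}.
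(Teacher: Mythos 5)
Your proposal is correct and follows essentially the same route as the paper: compute the vertex set as the pushout in $\Set$ (colimits of presheaves being dimensionwise), observe that the left leg collapses the $\square^{i-1}$-factor so that the generated relation identifies $I_1\cup\{i\}\cup I_2$ with $I_1'\cup\{i\}\cup I_2$, conclude that the quotient is classified by $\sup$, and then check the face/degeneracy compatibility by direct bookkeeping. The only point to pin down is the one you flag yourself — in the paper's convention the bottom row is $\square^{n-1}\sqcup\dots\sqcup\square^0$, so it is indeed the first factor $\square^{i-1}$ that is collapsed — and with that settled your argument matches the paper's.
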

\begin{proof}
Since colimits in $c\Set$ are computed dimensionwise, the set of vertices $(Q^n)_0$ is obtained as the pushout of the diagram above evaluated at $\square^0$. 
We claim that the set $\{0,\ldots,n\}$, together with the map
\[\begin{array}{cccc}
\pi_n:& \mathcal P(\{1,\ldots,n\}) &\rightarrow &\{0,\ldots,n\}\\
& a&\mapsto & \sup a,
\end{array}\]
satisfy the universal property of the pushout. Consider $I_1\subseteq \{1,\ldots,i-1\}$ and $I_2\subseteq \{i+1,\ldots,n\}$. Then $(I_1,I_2)$ is mapped horizontally to 
$I_1\cup\{i\}\cup I_2$ and vertically to $I_2$, hence $I_1\cup\{i\}\cup I_2$ is identified to $I'_1\cup\{i\}\cup I_2$ for any other $I'_1\subseteq \{1,\ldots,i-1\}$. The claim follows easily 
from this observation.
The rest of the statement is also checked easily.
\end{proof}

The left Kan extension of $Q : \Delta \to c\Set$ along the Yoneda morphism is also denoted by $Q:s\Set\to c\Set$ and then admits a right adjoint $\int$ defined as
$(\int{S})_n:=\Hom_{c\Set}(Q^n,S)$.  We have the following Quillen equivalences
(Corollary 6.24 and Proposition 6.25 of \cite{DKLS}).

\begin{thm}\label{Q_Quillen}
The adjunction  $Q : s\Set \rightleftarrows c\Set:\int$  is both a Quillen equivalence 
\begin{itemize}
    \item  between the Joyal model structure on $s\Set$ and the Joyal model structure on $c\Set$, and
    \item  between the Kan-Quillen model structure on $s\Set$ and the Grothendieck model structure on $c\Set$.
\end{itemize}
\end{thm}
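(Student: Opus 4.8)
The plan is to reduce the statement, in both cases, to the analogous property of the triangulation adjunction $T\dashv U$ --- recalled in the introduction to be a Quillen equivalence both for the Joyal structures on $s\Set$ and $c\Set$ and for the Kan--Quillen structure on $s\Set$ and the Grothendieck structure on $c\Set$ --- by exploiting the identity $T\circ Q\cong\mathrm{id}_{s\Set}$ together with the general fact that a Quillen adjunction is a Quillen equivalence as soon as the total left derived functor of its left adjoint is an equivalence of homotopy categories. Concretely, I would carry out three steps: (1) verify that $Q\dashv\int$ is a Quillen adjunction for both pairs of model structures; (2) establish the natural isomorphism $T\circ Q\cong\mathrm{id}_{s\Set}$; and (3) combine these with the Quillen equivalence $T\dashv U$ to conclude.

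For step (1), recall that in all four model structures the cofibrations are exactly the monomorphisms, and that the $s\Set$ sides are cofibrantly generated with generating cofibrations the boundary inclusions $\partial\Delta^n\hookrightarrow\Delta^n$. By the recognition criterion for Quillen adjunctions between cofibrantly generated model categories, it therefore suffices to show that $Q$ carries the generating cofibrations to monomorphisms and the generating trivial cofibrations to trivial cofibrations of $c\Set$. The first statement follows from the explicit pushout presentation of $Q^n$ in Definition~\ref{D:Q}: combining it with the Grandis--Mauri standard form, with Proposition~\ref{P:NDfactorisation}(3), and with Lemma~\ref{L:vertofQ}, one checks that $Q$ takes a monomorphism of simplicial sets to a map that is injective on nondegenerate cubes, hence a monomorphism. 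The second statement --- that $Q$ carries the inner horn inclusions (resp.\ all horn inclusions, resp.\ the remaining generating trivial cofibrations specific to the Joyal structure) to trivial cofibrations of $c\Set$ --- is the combinatorial heart of the argument: one filters $Q^n$ by successive cell attachments and realizes the induced map $Q(\Lambda^n_k)\hookrightarrow Q^n$ as a composite of pushouts of (inner) open box inclusions together with degenerate instances of such. I expect this verification --- or, dually, the statement that $\int$ takes fibrations between fibrant objects to fibrations --- to be the main obstacle.

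For step (2), both $T\circ Q$ and $\mathrm{id}_{s\Set}$ are cocontinuous (the former as a composite of left adjoints), so it is enough to produce a natural isomorphism $T(Q^n)\cong\Delta^n$ over $\Delta$. Since $T$ is cocontinuous and $T(\square^p\otimes\square^q)\cong T(\square^p)\times T(\square^q)$ on representables, $T(Q^n)$ is the quotient of the triangulated cube $(\Delta^1)^{\times n}$ that collapses, inside each face $\{x_j=1\}$, the first $j-1$ coordinates. A direct inspection --- or a comparison of the effect of the two functors on simplices, based on the description of the vertices of $Q^n$ in Lemma~\ref{L:vertofQ} --- identifies this quotient with $\Delta^n$, naturally in $[n]\in\Delta$; in particular the comparison map is induced by the map $\mathcal{P}(\{1,\dots,n\})\to\{0,\dots,n\}$, $a\mapsto\sup a$ (with $\sup\emptyset=0$).

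Finally, for step (3): every simplicial set is cofibrant, in both the Joyal and the Kan--Quillen structures, and $Q$ applied to any simplicial set is already cofibrant in $c\Set$; hence the total left derived functor $\mathbb{L}Q$ is computed by $Q$ itself, and $\mathbb{L}T\circ\mathbb{L}Q\cong\mathbb{L}(T\circ Q)\cong\mathbb{L}(\mathrm{id})=\mathrm{id}$ on $\Ho(s\Set)$, for the relevant pair of model structures in each of the two cases. Since $T\dashv U$ is a Quillen equivalence in each case, $\mathbb{L}T$ is an equivalence of homotopy categories; being a one-sided inverse of an equivalence, $\mathbb{L}Q$ is then itself an equivalence, and therefore the Quillen adjunction $Q\dashv\int$ is a Quillen equivalence --- for the Joyal structures and for the Kan--Quillen and Grothendieck structures alike.
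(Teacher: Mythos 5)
The paper does not actually prove this theorem: it imports it from \cite{DKLS} (their Corollary 6.24 and Proposition 6.25), so there is no internal argument to compare yours against. Your overall architecture --- reduce to the triangulation adjunction via a natural isomorphism $T\circ Q\cong\mathrm{id}_{s\Set}$, then conclude by the two-out-of-three property for derived equivalences --- is in fact the strategy of the cited source, and your steps (2) and (3) are sound: $T$ is cocontinuous and monoidal, the identification $T(Q^n)\cong\Delta^n$ (natural in $[n]$) is correct, and the formal argument that a one-sided derived inverse of an equivalence of homotopy categories is itself an equivalence, hence that the Quillen adjunction is a Quillen equivalence, is standard.

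The genuine gap is in your step (1), and it is twofold. First, the criterion ``$Q$ sends generating trivial cofibrations to trivial cofibrations'' is not available in the Joyal case: neither the Joyal model structure on $s\Set$ nor the cubical Joyal structure of Theorem \ref{Cset-Joyal} comes with an explicit set of generating trivial cofibrations (the inner horn inclusions are trivial cofibrations but do not generate them all), so one is forced into the alternative you mention only parenthetically, namely Joyal's criterion (Proposition \ref{Joyal_result}): $Q$ preserves cofibrations and $\int$ carries fibrations between fibrant objects to fibrations between fibrant objects. Second, whichever formulation one chooses, the combinatorial content --- that $Q(\Lambda^n_k)\to Q^n$ is (inner) anodyne in $c\Set$, or dually that $\int X$ is a quasi-category with the appropriate additional lifting properties whenever $X$ is a fibrant cubical set --- is precisely the substance of Section 6 of \cite{DKLS}, and your proposal defers it entirely; you correctly flag it as ``the main obstacle'' but do not surmount it, so the proof is incomplete at its hardest point. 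A smaller issue: you also need $T\dashv U$ to be a Quillen equivalence for the Kan--Quillen/Grothendieck pair, not only for the Joyal pair recalled in the introduction; this is true but is a further external input that must be sourced.
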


\section{Necklaces and paths} \label{necklace-section}

In this section and the following one, we follow closely the steps taken by Dugger and Spivak in \cite{DS11} in order to understand more concretely the simplicial 
rigidification functor. We adapt their approach to define the simplicial rigidification of cubical sets.

\subsection{Necklaces}\label{S:necklace}

Let $\cSdp = \slicecat{\partial\square^1}{c\Set}$ be the category of double pointed cubical sets. Given a cubical set $S$ and two vertices 
$a,b\in S_0$, the notation $S_{a,b}$ stands for the double pointed cubical set corresponding to the morphism $(\partial\square^1 \to S) \in 
\cSdp$ mapping $0$ to $a$ and $1$ to $b$. We refer to Section  \ref{S:wedge} for general constructions. When there is no ambiguity on the double pointing, we omit the 
indices and write $S\in \cSdp$. For example, the cube $\square^n$ is naturally double pointed by $\alpha$ and $\omega$ (see 
Definition \ref{order_ncube}), and if not specified otherwise we will consider this double pointing.

\begin{defn}\hspace{1cm}
\begin{itemize}
\item A {\sl (cubical) necklace} is an object $T$ of $\cSdp$ of  the form  $\square^{n_1} \vee \dots \vee 
\square^{n_k}$, for some sequence $(n_1,\ldots,n_k)$ of positive integers. The double pointing is induced by $\alpha\in\square^{n_1}$ and 
$\omega\in\square^{n_k}$. The empty sequence corresponds to the necklace $T=\square^0$ and it is the unique one satisfying $\alpha=\omega$.
\item For $k\geq 1$, the canonical morphism  $B_i:\square^{n_i} {\to} T$ in $c\Set$ is called the {\sl $i$-th bead} of $T$, so 
that $\id_T=B_1*\ldots*B_k$ (see Definition \ref{D:concat} for the notation).
\item We denote by $\Nec$  the full subcategory of $\cSdp$, whose objects are cubical necklaces. Objects will be identified with sequences $
(n_1,\ldots,n_k)$ of positive integers. Note that if $S$ is a 
necklace and $(a,b)\not=(\alpha,\omega)$, then an object of  the slice category $\slicecat{\Nec}{S_{a,b}}$, i.e., a morphism $T\rightarrow S_{a,b}$ with $T$ a necklace, 
is not a morphism in $\Nec$ since 
the double pointing in $\Nec$ is given by $(\alpha,\omega)$.
\item Given two sequences $(n_1,\ldots,n_k)$ and $(m_1,\ldots,m_l)$, their {\sl concatenation} is the sequence $(n_1,\ldots,n_k,m_1,\ldots,m_l)$.
\item  A {\sl decomposition}  of a non-empty sequence $(n_1,\ldots,n_k)$ in $l$ blocks is a collection $(A_1,\ldots,A_l)$ of non-empty sequences such that their concatenation is $(n_1,\ldots,n_k)$.
\end{itemize}
\end{defn}

\begin{figure}[h]
  \centering
  \[\xymatrix{ & & \ar[dr] & & & \ar[dr] & & \ar[r] \ar@{.>}[dr] & \ar[dr] & &
\\ \ar@{}[r]^{\hphantom{3cm} \alpha} & \cdot \ar[ur] \ar[dr] \ar@{}[rr]|{B_1} & & \ar[r]^{B_2} & \ar[ur] \ar[dr] \ar@{}[rr]|{B_3} & &  \ar[r] \ar[ur] 
\ar[dr] \ar@{}[rrr]|{B_4} & \ar[ur] \ar[dr] & \ar@{.>}[r] & \cdot \ar@{}[r]^-{\omega \hphantom{3cm}} &  
\\ & & \ar[ur] & & & \ar[ur] & & \ar[r] \ar@{.>}[ur] & \ar[ur] & &}\]
  \caption{The necklace associated to the sequence $(2,1,2,3)$}
\end{figure}

The following proposition describes the morphisms in $\Nec$.

\begin{prop}\label{P:Nec_morphisms} \hspace{1cm}

\begin{enumerate} 
\item In the category $\Nec$, a morphism $\varphi$ from $(n_1,\ldots,n_k)$ to $(m)$ decomposes uniquely as $
\varphi=\varphi_1*\ldots*\varphi_k$, where all $\varphi_i:\square^{n_i}\rightarrow \square^m$  are  morphisms in $\square$, and satisfy
\[\begin{cases} \varphi_1(\alpha)=\alpha,\\
 \varphi_i(\omega)=\varphi_{i+1}(\alpha),&\forall 1\leq i\leq k-1,\\
 \varphi_k(\omega)=\omega.
 \end{cases}\]
 In particular $m\leq n_1+\ldots+n_k$.
 \item
 Given a morphism $f:(n_1,\ldots,n_k)\rightarrow(m_1,\ldots,m_l)$ in $\Nec$, there exists a decomposition $(A_1,\ldots,A_l)$ of the sequence $
 (n_1,\ldots,n_k)$ into $l$ parts and 
 morphisms  $f_j:A_j\rightarrow (m_j)$ in $\Nec$ such that $f=f_1\vee\ldots\vee f_l$. This decomposition is unique if, for any $1\leq i\leq k$, the 
 restriction of $f$ to the bead $
 (n_i)$ is not constant.
\end{enumerate}
 \end{prop}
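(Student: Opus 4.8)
The plan is to prove the two parts in order, using the monomorphism/epimorphism factorisation machinery from Proposition~\ref{P:NDfactorisation} and the explicit description of $\square$-morphisms in terms of their action on $\alpha,\omega$ (Lemma~\ref{L:distance}).

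\textbf{Part (1).} A morphism $\varphi:(n_1,\dots,n_k)\to(m)$ in $\Nec$ is, by definition of $\cSdp$ and of the wedge, a double-pointed map $\varphi:\square^{n_1}\vee\dots\vee\square^{n_k}\to\square^m$. First I would observe that precomposing $\varphi$ with the $i$-th bead $B_i:\square^{n_i}\to\square^{n_1}\vee\dots\vee\square^{n_k}$ gives a $\square$-morphism $\varphi_i:=\varphi\circ B_i:\square^{n_i}\to\square^m$. Because the wedge identifies $\omega\in\square^{n_i}$ with $\alpha\in\square^{n_{i+1}}$, and because $\varphi$ respects the double-pointing (so $\varphi(\alpha)=\alpha$ and $\varphi(\omega)=\omega$), the three displayed compatibility conditions are immediate: $\varphi_1(\alpha)=\varphi(\alpha)=\alpha$, $\varphi_i(\omega)=\varphi(\text{glued vertex})=\varphi_{i+1}(\alpha)$, and $\varphi_k(\omega)=\varphi(\omega)=\omega$. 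Conversely, given $\square$-morphisms $\varphi_i$ satisfying these matching conditions, the universal property of the wedge (the colimit defining $\square^{n_1}\vee\dots\vee\square^{n_k}$; see Section~\ref{S:wedge}) assembles them into a unique double-pointed map $\varphi_1*\dots*\varphi_k$; uniqueness of the decomposition is exactly this universal property. For the inequality $m\le n_1+\dots+n_k$: by Lemma~\ref{L:distance}, $d(\varphi_i(\alpha),\varphi_i(\omega))\le n_i$; since $d$ is additive along a chain of vertices $\alpha=\varphi_1(\alpha)\preccurlyeq\varphi_1(\omega)=\varphi_2(\alpha)\preccurlyeq\dots\preccurlyeq\varphi_k(\omega)=\omega$, summing gives $m=d(\alpha,\omega)\le\sum n_i$.

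\textbf{Part (2).} Here the target is a general necklace $(m_1,\dots,m_l)$ rather than a single cube. Given $f:(n_1,\dots,n_k)\to(m_1,\dots,m_l)$, I want to cut the source into consecutive blocks $A_1,\dots,A_l$ so that block $A_j$ lands in the $j$-th bead $\square^{m_j}$. The idea is to track the images of the joint (gluing) vertices of the source necklace: let $v_0=\alpha\preccurlyeq v_1\preccurlyeq\dots\preccurlyeq v_k=\omega$ be the vertices of $(n_1,\dots,n_k)$, with $v_{i-1},v_i$ being the endpoints of bead $(n_i)$. Their images $f(v_i)$ form a monotone chain in the necklace $(m_1,\dots,m_l)$ from $\alpha$ to $\omega$, hence pass monotonically through the gluing vertices $w_0=\alpha,w_1,\dots,w_l=\omega$ of the target. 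Because $f$ restricted to each bead $\square^{n_i}$ is a $\square$-morphism, its image is contained in a single bead of the target exactly when $f(v_{i-1})$ and $f(v_i)$ lie in the closed subcube spanned by that bead — and the only way $f(\square^{n_i})$ can fail to lie in a single bead is if $f(v_{i-1})=f(v_i)$ equals a gluing vertex $w_j$, i.e. the restriction is "constant at a node" (this is where the non-constancy hypothesis enters). Having chosen, for each $i$, an index $j(i)$ with $f(\square^{n_i})\subseteq\square^{m_{j(i)}}$ (and where this is forced when the restriction is non-constant), monotonicity of $i\mapsto j(i)$ lets me group $\{1,\dots,k\}$ into consecutive blocks $A_j=\{i: j(i)=j\}$, giving the decomposition; the corestrictions $f_j:A_j\to(m_j)$ are then morphisms in $\Nec$ by Part~(1) applied to the target cube $(m_j)$, and $f=f_1\vee\dots\vee f_l$ by the universal property of the wedge decomposition of the target.

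\textbf{Main obstacle.} The delicate point is Part~(2), specifically making precise "$f$ restricted to the bead $(n_i)$ lands in a single bead of the target" and the role of the non-constancy hypothesis in pinning down $j(i)$. A bead map $f_i:\square^{n_i}\to(m_1,\dots,m_l)$ whose endpoints both map to a single node $w_j$ could, if non-constant, bulge into either $\square^{m_j}$ or $\square^{m_{j+1}}$ (or further), so the block containing $i$ is genuinely ambiguous; conversely even a constant bead can be assigned to any block touching that node. I would handle this by first establishing a lemma: \emph{any $\square$-morphism $\square^n\to(m_1,\dots,m_l)$ whose source is connected and whose image meets two distinct beads must send $\alpha$ and $\omega$ to the same gluing vertex $w_j$} — this follows because the image is a sub-double-pointed-cubical set of the necklace, and the beads of a necklace overlap only in single vertices, so a subcube straddling a node $w_j$ must collapse its endpoints onto $w_j$ (using that $\square^n$ has a unique minimum and maximum and that $f$ being a $\square$-morphism preserves $\preccurlyeq$, combined with Lemma~\ref{L:distance}). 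Granting this lemma, the assignment $i\mapsto j(i)$ is canonical whenever the bead restriction is non-constant, its monotonicity in $i$ is straightforward from $f$ preserving the order on vertices, and uniqueness of the decomposition follows. I expect the bookkeeping of monotonicity and the precise statement of the straddling lemma to be the only real work; everything else is the universal property of wedges plus Part~(1).
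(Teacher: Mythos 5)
Your part (1) matches the paper's (which calls it ``a direct consequence of the definition of the concatenation'' and obtains $m\le\sum n_i$ from Lemma~\ref{L:distance} exactly as you do), and your overall strategy for part (2) --- assign to each source bead the target bead containing its image, observe that the assignment is ambiguous only for beads constant at a gluing vertex, deduce existence and conditional uniqueness --- is also the paper's. The problem is precisely the step you single out as the obstacle. First, your ``straddling lemma'' is false as literally stated: the bead inclusion $B_1:\square^{m_1}\to\square^{m_1}\vee\square^{m_2}$ has image meeting both beads (they share the gluing vertex), yet $B_1(\alpha)\ne B_1(\omega)$ and neither is forced to be that vertex. What you actually need is that the image of \emph{every} cube $\square^{n}\to\square^{m_1}\vee\dots\vee\square^{m_l}$ is contained in a \emph{single} bead. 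Second, the justification you offer (order-preservation on vertices, uniqueness of $\alpha$ and $\omega$ in $\square^{n}$, Lemma~\ref{L:distance}) only controls the \emph{vertices} of the image: it shows they all lie in the interval $[f(\alpha),f(\omega)]$ of $T_0$, hence in the vertex set of one bead when the endpoints do. It does not by itself show that the higher-dimensional cubes in the image lie in that bead, which is the actual content needed.

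The missing ingredient is the paper's (essentially definitional) observation that colimits of presheaves are computed levelwise: the set of $n$-cubes of $T=\square^{m_1}\vee\dots\vee\square^{m_l}$ is the quotient of $\bigsqcup_j(\square^{m_j})_n$ by the identification of the totally degenerate cubes over the gluing vertices. Hence each component $\varphi_i:(n_i)\to T$ \emph{is} an $n_i$-cube of some single $\square^{m_j}$, with $j$ ambiguous exactly when $\varphi_i$ is degenerate at a gluing vertex, i.e.\ constant there. With that fact in hand, your bookkeeping (monotonicity of $i\mapsto j(i)$, grouping into consecutive blocks, uniqueness under the non-constancy hypothesis) goes through and coincides with the paper's inductive construction. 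Your order-theoretic route could alternatively be repaired, but it takes more than you indicate: compose with the standard embedding $T\hookrightarrow\square^N$, use the epi--mono factorisation to identify the image of $\varphi_i$ with the subcube $[\varphi_i(\alpha),\varphi_i(\omega)]$ of $\square^N$, and check that this subcube lies inside $T$ only if its endpoints lie in a common bead.
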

 
 \begin{proof} The first part of the proposition is a direct consequence of the definition of the concatenation. Note that a map from 
 $(n_1,\ldots,n_k)$ to $(m)$ yields a chain $
 \alpha=a_0\preccurlyeq a_1\preccurlyeq\ldots\preccurlyeq a_{k-1}\preccurlyeq a_k=\omega$ in $(\square^m)_0$ with $d(a_{i-1},a_{i})\leq n_i$, 
 by Lemma  \ref{L:distance}. Hence $m=d(\alpha,\omega)\leq n_1+\ldots+n_k$.\\
 Let us prove the second part. For the sake of clarity, we denote by $(\alpha_i,\omega_i)$ the initial and terminal vertices of $\square^{m_i}$.
 For any cubical set $S$ and any $n\geq 0$, we denote by $\sigma:S_0\rightarrow S_n$ the map induced by the unique map 
 $\square^n\rightarrow\square^0$ in $\square$. Let 
 $T=\square^{m_1}\vee\ldots\vee\square^{m_l}$.  By definition of concatenation,  the set of $n$-cubes in the cubical set $T$ is the quotient of the 
 disjoint union of the $n$-cubes of 
 $\square^{m_i}$ by the relation $\sigma(\omega_i)=\sigma(\alpha_{i+1})$ for $1\leq i\leq l-1$. 
  Let $\varphi: (n_1,\ldots,n_k)\rightarrow T$ be a morphism in $\Nec$ and let $\varphi_i:(n_i)\rightarrow (m_1,\ldots,m_l)$ be its components, that is, $\varphi_i$ 
 is an $n_i$-cube of $T$, and 
 $\varphi=\varphi_1*\ldots *\varphi_k$. Since $\varphi_1(\alpha)=\alpha_1$, necessarily $\varphi_1$ is an $n_1$-cube of $\square^{m_1}$. Since $
 \varphi_1(\omega)=\varphi_2(\alpha)$, there are two possibilities: either $\varphi_1(\omega)\not=\omega_1$ and then $\varphi_2$ is an 
 $n_2$-cube of $\square^{m_1}$, or $
 \varphi_1(\omega)=\omega_1$ and $\varphi_2$ is an $n_2$-cube of $\square^{m_2}$. Inductively, we get a decomposition $(A_1,\ldots,A_l)$, where 
 $A_i$ is a
 sequence of consecutive $n_j$  such that $\varphi_{j}$ is an $n_j$-cube of $\square^{m_i}$.  The decomposition is not unique in general. For 
 example,  if above we  had $\varphi_2=\sigma(\omega_1)$, then we could have chosen to keep $n_2$ in $A_1$.  But under the assumption that 
 each $\varphi_i$ is not constant,  we do have uniqueness, since we can identify unambiguously in 
 which component of $T$ it lies.
 \end{proof}

 \begin{ex}\label{E:mornec} (i) There is no morphism from $(2,1,3)$ to $(3,2,1)$: there is a unique decomposition of $(2,1,3)$ into three parts and, 
 by Lemma \ref{L:distance}, there is no morphism in $\Nec$ from $\square^2$ to $\square^3$.\\
 (ii) A morphism  $f:(2,1,3)\rightarrow (2,1)$ in $\Nec$ is either of the form $g\vee h$ with $g:(2,1)\rightarrow (2)$ and $h:(3)\rightarrow (1)$ in $
 \Nec$ (first type), or is $\id_{(2)}\vee(\id_{(1)} *c_\omega)$, where $c_\omega: (3)\to (1)$ is the constant map with value $
 \omega$. Indeed, there are two decompositions of $(2,1,3)$ into two blocks. The decomposition $((2,1),(3))$ 
 gives the first decomposition. The decomposition $((2),(1,3))$  gives  $f=k\vee(l_1*l_2)$, where $k:(2)\to (2) \in\Nec$ is necessarily the identity and 
 $l_1*l_2:(
 1,3)\to (1)$.
 The only maps $l_1:(1)\to (1)$  such that $l_1(\alpha)=\alpha$ are $c_{\alpha}$ and the identity. If $l_1=c_\alpha$, then 
 $(k*c_\omega)\vee l_2: (2,1)\vee (3)\to (2)\vee (1)$ is also a decomposition of $f$ of the first type. If $l_1=\id_{(1)}$, then 
 $l_2=c_\omega$.
 \end{ex}

 \begin{notation}If $T=(n_1,\ldots,n_k)$ is a cubical necklace, then 
$T_0=(\square^{n_1})_0\vee\ldots\vee(\square^{n_k})_0$ is a bounded poset (see  Definition \ref{bounded-poset}). We will also denote the order
 in $T_0$ by $\preccurlyeq$. Moreover, 
 setting $n=n_1+\ldots+n_k$, any monomorphism in $\Nec$ from $T$ to $\square^n$ is a morphism 
 of posets on vertices, which justifies the notation $\preccurlyeq$.
 \end{notation}
 
 The following Lemma is an easy consequence of  Lemma \ref{L:distance}. 

\begin{lem}\label{L:canonical-embedding} Let $T=(n_1,\ldots,n_k)$ be a cubical necklace.
\begin{itemize} 
\item Any monomorphism $\varphi: T\hookrightarrow\square^n$ in $\Nec$ is uniquely determined by a sequence 
$a_0\prec a_1\prec\ldots\prec a_k$ of vertices in $\square^n$ satisfying
$a_0=\alpha, a_k=\omega$ and $d(a_{i-1},a_{i})=n_i$. 
The sequence $\emptyset \prec \{1,\ldots,n_1\}\prec\ldots\prec \{1,\ldots,n_1+\ldots+n_{k-1}\}\prec \{1,\ldots,n\}$ corresponds to an 
embedding $T\hookrightarrow\square^n$ that we will call the {\sl standard embedding}.
\item If $n_1 =\ldots = n_k =1$, then any morphism $\varphi: T\to\square^n$ in $\Nec$ is uniquely determined by a 
sequence $a_0\preccurlyeq a_1\preccurlyeq\ldots\preccurlyeq a_k$ of vertices in $\square^n$ satisfying
$a_0=\alpha, a_k=\omega$ and $d(a_{i-1},a_{i})\leq 1$.
\item  If $n_1 =\ldots = n_k =1$, then any monomorphism $\varphi: T\hookrightarrow\square^n$ in $\Nec$ is uniquely determined by a 
sequence $a_0\prec a_1\prec\ldots\prec a_k$ of vertices in $\square^n$ satisfying
$a_0=\alpha, a_k=\omega$ and $d(a_{i-1},a_{i})=1$.
\end{itemize}
\end{lem}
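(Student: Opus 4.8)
The plan is to read all three statements off Proposition \ref{P:Nec_morphisms}(1) together with Lemma \ref{L:distance}; the only point that is not purely formal is the converse direction, namely that every sequence meeting the stated conditions is actually realised by a (mono)morphism.

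First I would treat a morphism $\varphi:T\to\square^n$ in $\Nec$ with $T=(n_1,\ldots,n_k)$ and target the single bead $(n)$. By Proposition \ref{P:Nec_morphisms}(1) it factors uniquely as $\varphi=\varphi_1*\cdots*\varphi_k$ with $\varphi_i:\square^{n_i}\to\square^n$ in $\square$, and since $\varphi_i=\varphi\circ B_i$ is the composite of $\varphi$ with the $i$-th bead $B_i$, one gets $\varphi_i(\alpha)=a_{i-1}$ and $\varphi_i(\omega)=a_i$ where $a_0=\alpha\preccurlyeq a_1\preccurlyeq\cdots\preccurlyeq a_k=\omega$ with $d(a_{i-1},a_i)\le n_i$ by Lemma \ref{L:distance}. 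If $\varphi$ is a monomorphism, so is each $\varphi_i$ (a composite of the monomorphism $\varphi$ with the bead inclusion, itself a monomorphism), whence $d(a_{i-1},a_i)=n_i$, so $a_{i-1}\prec a_i$, and $\varphi_i=\iota^n_{a_{i-1},a_i}$ is determined by its endpoints, again by Lemma \ref{L:distance}. Thus $\varphi$ is recovered from the chain $(a_0,\ldots,a_k)$; and the chain $a_i=\{1,\ldots,n_1+\cdots+n_i\}$, whose successive differences have sizes $n_1,\ldots,n_k$, yields the standard embedding. This gives the first bullet, modulo the realisation claim.

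Next, for $n_1=\cdots=n_k=1$ I would drop the monomorphism hypothesis. Decomposing $\varphi=\varphi_1*\cdots*\varphi_k$ with $\varphi_i:\square^1\to\square^n$, the last sentence of Lemma \ref{L:distance} shows that $\varphi_i$ is either constant — so it factors through $\square^0$ and is the degeneracy at $a_{i-1}=a_i$ — or is the face $\iota^n_{a_{i-1},a_i}$ with $d(a_{i-1},a_i)=1$; either way $\varphi_i$ is the unique morphism $\square^1\to\square^n$ with endpoints $a_{i-1}\preccurlyeq a_i$, $d(a_{i-1},a_i)\le 1$, so $\varphi$ is recovered from the chain $(a_0,\ldots,a_k)$. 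Conversely every such chain is realised by gluing the corresponding faces and degeneracies, using Proposition \ref{P:Nec_morphisms}(1). The third bullet then follows by combining the first two: with all $n_i=1$, a morphism in $\Nec$ is a monomorphism precisely when no $\varphi_i$ is constant, i.e. when $d(a_{i-1},a_i)=1$ for all $i$.

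The step I expect to be the main obstacle is the realisation claim used in the first (hence third) bullet: that gluing faces $\iota^n_{a_0,a_1},\ldots,\iota^n_{a_{k-1},a_k}$ along a strict chain $a_0\prec\cdots\prec a_k$ does produce a monomorphism $T\hookrightarrow\square^n$. I would verify this via Proposition \ref{P:NDfactorisation}(3): face maps carry nondegenerate cubes to nondegenerate cubes injectively; a nondegenerate cube of $T$ of positive dimension lies in exactly one bead; and for $i<j$ the intervals $[a_{i-1},a_i]$ and $[a_{j-1},a_j]$ of the Boolean lattice $(\square^n)_0$ meet only at $\{a_i\}$ when $j=i+1$ and not at all otherwise, since $a_i\preccurlyeq a_{j-1}$. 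Hence no positive-dimensional nondegenerate cube of $\square^n$ is hit twice, the only coincidences among images of vertices are the already-identified pairs $\omega_i=\alpha_{i+1}$, and $\varphi$ is injective on nondegenerate cubes, so it is a monomorphism by Proposition \ref{P:NDfactorisation}(3).
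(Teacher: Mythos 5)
Your proof is correct and follows exactly the route the paper intends: the paper offers no argument beyond the remark that the lemma ``is an easy consequence of Lemma \ref{L:distance}'', and your elaboration via the decomposition $\varphi=\varphi_1*\cdots*\varphi_k$ of Proposition \ref{P:Nec_morphisms}(1) together with Lemma \ref{L:distance} is precisely that consequence spelled out. Your additional verification of the realisation claim (that a strict chain with $d(a_{i-1},a_i)=n_i$ actually glues to a monomorphism, checked via Proposition \ref{P:NDfactorisation}(3) and the fact that the intervals $[a_{i-1},a_i]$ meet in at most one vertex) correctly supplies a point the paper leaves implicit but uses, e.g.\ for the standard embedding and in Proposition \ref{P:subneckposet}.
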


\begin{defn}\label{D:T_ab}
Let $T=(n_1,\ldots,n_k)$ be a necklace and $a\preccurlyeq b$ be vertices of $T$.
We define $\iota^T_{a,b} : T_{[a,b]} \hookrightarrow T$ in $c\Set$  as follows: 

\begin{itemize}
\item If $a,b\in\square^{n_i}$ then $T_{[a,b]} := \square^{d(a,b)}$ and $\iota^T_{a,b} :=B_i\circ \iota^{n_i}_{a,b}$ (cf. Definition \ref{order_ncube}).
\item If $a\in\square^{n_i}$ and $b\in\square^{n_j}$ with $i<j$, then $T_{[a,b]}:= \square^{d(a,\omega)}\vee\square^{n_{i+1}}
\vee\dots\vee\square^{n_{j-1}}\vee\square^{d(\alpha,b)}$ and $\iota^T_{a,b} := (B_i\circ\iota^{n_i}_{a,\omega}) * B_{i+1}*\ldots* B_{j-1}* (B_j\circ\iota^{n_j}_{\alpha,b})$.
\end{itemize}

Hence $T_{[a,b]}$ is a necklace and $\iota^T_{a,b} : T_{[a,b]} \hookrightarrow T$ is a monomorphism in $c\Set$. We call $T_{[a,b]}$  the {\sl 
subnecklace} of $T$ between $a$ and $b$.
\end{defn}

\begin{rmk}
This is well-defined as $\square^0$ is the unit of the monoidal product $\vee$, so the construction of $T_{[a,b]}$ does not depend on 
the bead chosen for containing $a$ or $b$.
\end{rmk}

\begin{prop}\label{univ_prop_subneck}
Let $T$ be a necklace and $a\preccurlyeq b\in T_0$. The object $\iota^T_{a,b} : T_{[a,b]} \hookrightarrow T$ is terminal in 
$\slicecat{\Nec}{T_{a,b}}$. 
\end{prop}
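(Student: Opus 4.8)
The plan is to exploit that $\iota^T_{a,b}$ is a monomorphism (Definition~\ref{D:T_ab}): then any factorisation of a map through it is automatically unique, so it suffices to show that every object $f\colon S\to T_{a,b}$ of $\slicecat{\Nec}{T_{a,b}}$ admits \emph{some} factorisation through $\iota^T_{a,b}$. One then checks painlessly that the resulting map $g\colon S\to T_{[a,b]}$ with $\iota^T_{a,b}\circ g=f$ is a morphism of $\Nec$ over $T_{a,b}$: it preserves $\alpha$ and $\omega$ because $\iota^T_{a,b}$ does and is mono. Here $S=(p_1,\dots,p_r)$ is a necklace and $f$, as a map in $\cSdp$, sends the basepoints of $S$ to $a$ and $b$.

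First I would decompose $f$ along the beads of $S$. Writing $\id_S=B^S_1*\dots*B^S_r$ and $f_\ell:=f\circ B^S_\ell$, each $f_\ell$ is a $p_\ell$-cube of $T$, $f=f_1*\dots*f_r$, with $f_1(\alpha)=a$, $f_\ell(\omega)=f_{\ell+1}(\alpha)$ and $f_r(\omega)=b$. Set $c_0:=a$ and $c_\ell:=f_\ell(\omega)$ (so $c_r=b$). The heart of the argument is the claim that each $f_\ell$ factors through $\iota^T_{c_{\ell-1},c_\ell}\colon T_{[c_{\ell-1},c_\ell]}\hookrightarrow T$. To prove it, take the epi--mono factorisation $f_\ell=y_\ell\circ e_\ell$ of Proposition~\ref{P:NDfactorisation}(1), with $e_\ell$ an epimorphism of $\square$ and $y_\ell$ a nondegenerate cube of $T$. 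Since $T$ is an iterated wedge, i.e.\ an iterated pushout of its beads along vertex inclusions $\square^0$, and $\square^0$ has no nondegenerate cube in positive degree, $y_\ell=B_i\circ z$ for a unique bead $B_i$ and a nondegenerate cube $z$ of $\square^{n_i}$; and a nondegenerate cube of a representable is a monomorphism, by the normal form in $\square$ together with Proposition~\ref{P:NDfactorisation}(3) and Lemma~\ref{L:distance}. As epimorphisms of $\square$ (composites of degeneracies and connections) fix $\alpha$ and $\omega$, we get $B_i(z(\alpha))=c_{\ell-1}$ and $B_i(z(\omega))=c_\ell$; in particular $c_{\ell-1}\preccurlyeq c_\ell$ lie in the common bead $\square^{n_i}$, so $T_{[c_{\ell-1},c_\ell]}$ is well defined and, by Lemma~\ref{L:distance}, the face $z$ equals $\iota^{n_i}_{c_{\ell-1},c_\ell}$. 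Comparing with Definition~\ref{D:T_ab} yields $y_\ell=B_i\circ\iota^{n_i}_{c_{\ell-1},c_\ell}=\iota^T_{c_{\ell-1},c_\ell}$, hence $f_\ell=\iota^T_{c_{\ell-1},c_\ell}\circ e_\ell$; write $f_\ell=\iota^T_{c_{\ell-1},c_\ell}\circ h_\ell$.

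To conclude, by induction $a=c_0\preccurlyeq c_1\preccurlyeq\dots\preccurlyeq c_r=b$, so every $c_\ell$ lies in the interval $[a,b]$ of $T_0$, and inspection of the formulas of Definition~\ref{D:T_ab} (transitivity of the subnecklace construction) provides inclusions $k_\ell\colon T_{[c_{\ell-1},c_\ell]}\hookrightarrow T_{[a,b]}$ with $\iota^T_{a,b}\circ k_\ell=\iota^T_{c_{\ell-1},c_\ell}$. Using that post-composition distributes over concatenation (Definition~\ref{D:concat}),
\[
f=f_1*\dots*f_r=(\iota^T_{a,b}\,k_1h_1)*\dots*(\iota^T_{a,b}\,k_rh_r)=\iota^T_{a,b}\circ\bigl((k_1h_1)*\dots*(k_rh_r)\bigr),
\]
which is the desired factorisation. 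The only genuinely delicate point is the structural fact that a nondegenerate cube of the wedge lives in a single bead — this is what lets us identify $y_\ell$ with $\iota^T_{c_{\ell-1},c_\ell}$; everything else is bookkeeping with the explicit description of $\iota^T_{a,b}$, the normal form of $\square$, and Lemma~\ref{L:distance}.
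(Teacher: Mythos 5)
Your proposal is correct and follows essentially the same route as the paper, whose proof simply defers to the bead-by-bead decomposition technique of Proposition~\ref{P:Nec_morphisms}; you carry out that decomposition explicitly (each component $f_\ell$ has its nondegenerate part in a single bead of $T$, hence factors through $T_{[c_{\ell-1},c_\ell]}$, and these reassemble into $T_{[a,b]}$). Deriving uniqueness directly from the fact that $\iota^T_{a,b}$ is a monomorphism is a clean way to package the second half of the argument.
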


\begin{proof}
Let $f:X_{\alpha,\omega}\rightarrow T_{a,b}$ be a map in $\cSdp$ with $X$ a necklace. Proceeding like in the proof of  Proposition 
\ref{P:Nec_morphisms}, we get that $f$ factors uniquely through $T_{[a,b]}$ as $f=\iota^T_{a,b}\circ \hat f$ with $\hat f:X_{\alpha,\omega}\rightarrow T_{[a,b]}$ a 
morphism in $\Nec$.
\end{proof}

The next  lemma states properties that will be needed in the proof of Proposition \ref{Ct_with_subneck}.

\begin{lem}\label{L:TNDmono} Let $S$ be a cubical subset of $\square^n$. Let $a\preccurlyeq b$ be two vertices of $S$.
\begin{enumerate}
\item An $m$-cube $x:\square^m\rightarrow S$ is nondegenerate if and only if $x$ is a monomorphism. 
\item A map $f:T\rightarrow S$ with $T$ a necklace is a monomorphism if and only if it is a monomorphism on every bead.
\item Any object  $f:T\rightarrow S_{a,b}$ of $\slicecat{\Nec}{S_{a,b}}$ factors uniquely as $f=\iota(f)\pi(f)$, where $\pi(f):T\to T^f$ is an 
epimorphism in $\Nec$ and  $\iota(f):T^f\to S_{a,b}$ is a monomorphism in $c\Set$.
\end{enumerate}
\end{lem}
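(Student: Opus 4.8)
The plan is to prove the three items of Lemma~\ref{L:TNDmono} in order, each relying on facts already established in the excerpt.

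\medskip
\noindent\textbf{Item (1).} Since $S\subseteq\square^n$, a cube $x:\square^m\to S$ is the same as a cube $x:\square^m\to\square^n$ whose image lies in $S$. By Lemma~\ref{L:distance}, such a map is a monomorphism if and only if $d(x(\alpha),x(\omega))=m$, and any non-monomorphism factors through a composite of degeneracies and connections (strictly decreasing the $\alpha$--$\omega$ distance), hence is degenerate in the sense used here. Conversely a degenerate or connection-image cube is not a monomorphism because the defining epimorphism is not injective on vertices (again by Lemma~\ref{L:distance}, since such epimorphisms drop the distance). So for subsets of $\square^n$ the two notions coincide. First I would simply spell this out, invoking Lemma~\ref{L:distance} and Proposition~\ref{P:NDfactorisation}(1).

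\medskip
\noindent\textbf{Item (2).} Write $T=(n_1,\dots,n_k)$ with beads $B_i:\square^{n_i}\to T$, and recall from the construction of $\vee$ that the $m$-cubes of $T$ are the disjoint union of the $m$-cubes of the beads, glued only along the degenerate cubes $\sigma(\omega_i)=\sigma(\alpha_{i+1})$ at the joining vertices. If $f:T\to S$ is a monomorphism, then each $f\circ B_i$ is a monomorphism, being a composite of monomorphisms. Conversely, suppose each $f\circ B_i$ is a monomorphism. By Proposition~\ref{P:NDfactorisation}(3) it suffices to check that $f$ sends nondegenerate cubes of $T$ to nondegenerate cubes of $S$ injectively. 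A nondegenerate cube of $T$ lives (by the description of $\vee$) inside a single bead, say it is $B_i\circ y$ for $y$ a nondegenerate cube of $\square^{n_i}$; since $f\circ B_i$ is a monomorphism, by item~(1) applied to $f\circ B_i$ (or directly by Proposition~\ref{P:NDfactorisation}(3)) it sends $y$ to a nondegenerate cube of $S$. Injectivity: two nondegenerate cubes of $T$ in the same bead are separated because $f\circ B_i$ is mono; two in different beads have distinct images because their $\alpha$--$\omega$-type vertices land at the images of the chain $a_0\prec\dots\prec a_k$, which is strictly increasing in $S\subseteq\square^n$ as $f$ is mono on each bead (here one uses that a cube in bead $i$ has all its vertices between $f(a_{i-1})$ and $f(a_i)$, and these intervals only overlap at single points). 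This is the step I expect to require the most care, essentially a bookkeeping argument on how nondegenerate cubes of a wedge distribute among the beads.

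\medskip
\noindent\textbf{Item (3).} Given $f:T\to S_{a,b}$ with $T=(n_1,\dots,n_k)$ a necklace, apply Proposition~\ref{P:NDfactorisation}(1) bead by bead: for each $i$, the cube $f\circ B_i:\square^{n_i}\to S$ factors uniquely as $y_i\circ\varphi_i$ with $\varphi_i:\square^{n_i}\to\square^{p_i}$ an epimorphism in $\square$ and $y_i:\square^{p_i}\to S$ nondegenerate, hence by item~(1) a monomorphism. Set $T^f:=(p_1,\dots,p_k)$, dropping those $p_i=0$ so that $T^f$ is again a genuine necklace (the $\square^0$ factors are absorbed as $\vee$-units, exactly as in the Remark after Definition~\ref{D:T_ab}); let $\pi(f):=\varphi_1*\dots*\varphi_k$, which is an epimorphism in $\Nec$ because a concatenation of epimorphisms is an epimorphism, and $\iota(f):=y_1*\dots*y_k:T^f\to S_{a,b}$, which is a monomorphism by item~(2) since it is a monomorphism on each bead. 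Then $f=\iota(f)\pi(f)$, and the double-pointing is preserved because $\pi(f)$ fixes $\alpha$ and $\omega$ (epimorphisms in $\square$ fix the extremal vertices, by the factorisation before Lemma~\ref{L:distance}). Uniqueness follows from the uniqueness of the per-bead factorisations in Proposition~\ref{P:NDfactorisation}(1) together with item~(2): if $f=\iota\pi$ with $\pi$ epi in $\Nec$ and $\iota$ mono, restrict to each bead and invoke uniqueness there, noting that an epimorphism in $\Nec$ restricts on beads to a concatenation of epimorphisms in $\square$ by Proposition~\ref{P:Nec_morphisms}(1). I would close by remarking that this epi--mono factorisation in $\slicecat{\Nec}{S_{a,b}}$ is exactly what feeds into Proposition~\ref{Ct_with_subneck}.
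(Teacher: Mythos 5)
Your proposal is correct and follows essentially the same route as the paper: item (1) via the epi--mono (nondegenerate) factorisation of Proposition~\ref{P:NDfactorisation} together with the characterisation of monomorphisms in Lemma~\ref{L:distance}; item (2) via Proposition~\ref{P:NDfactorisation}(3), locating each nondegenerate cube in a single bead and separating beads by the strictly increasing chain of joint vertices in $\square^n$; item (3) by factoring bead by bead and concatenating. The only differences are cosmetic (e.g.\ your explicit remark about dropping the $p_i=0$ beads, which the paper leaves implicit in the $\vee$-unit convention).
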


\begin{proof} We make use of Proposition \ref{P:NDfactorisation}. \\
$(1)$ If $x$ is nondegenerate in $S$, then $x$ is nondegenerate in $\square^n$, hence a monomorphism, and so is $x$. Conversely, if $x$ is  a monomorphism, 
we can factor uniquely $x=ip$ with $p$ an epimorphism and $i$ a nondegenerate map. In particular $p$ is a monomorphism, thus an isomorphism of cubes, that is, it is the identity. \\
$(2)$ Let $B_i:\square^{n_i}\to T=(n_1,\ldots,n_k)$ be the inclusion of the $i$-th bead of $T$. If $f$ is a monomorphism, then $f B_i$ is. For the converse, we use 
Proposition \ref{P:NDfactorisation}(3). Let $x:\square^m\rightarrow T$ be a nondegenerate $m$-cube of $T$.
Since $x$ factors through one bead $B_i$, we have that $f(x)=(f B_i)(x)$ is nondegenerate. Let  $x,y$ be two nondegenerate cubes in $T$ such that $f(x)=f(y)$ and $x$ and $y$ are not vertices of $T$. 
Assume $x$ factors through $B_i$, and $y$ through $B_j$, with $i<j$. Then $f B_i(\alpha_i)\prec f B_i(\omega_i)\preccurlyeq f B_j(\alpha_j)$. The inequalities hold because $S$ is a 
cubical subset of $\square^n$ and the left one is strict because $f B_i$ is a monomorphism and $x$ is nondegenerate and not a vertex. Hence $f(x)=f(y)$ is not possible. Therefore, 
$x$ and $y$  factor through the same bead, and hence $x=y$ by our assumption.\\
$(3)$ For every bead $B_i$ of $T$, $fB_i$ factors uniquely as $\iota_i(f)\circ \pi_i(f)$ with $\pi_i(f)$ an epimorphism and $\iota_i(f)$ a monomorphism (by (1)). Setting 
$\pi(f)=\pi_1(f)\vee\ldots\vee \pi_k(f)$ and $\iota(f)=\iota_1(f)*\ldots*\iota_k(f)$, we get the desired factorisation (by (2)).  It is unique since $f$ writes uniquely as $f_1*\ldots *f_k$ 
and each $f_i$ factors uniquely.
\end{proof}

\subsection{The path category of a cubical set}
In this section we associate to a cubical set $S$ a category enriched in  prosets (i.e., preordered
sets) $\Cbox_{path}(S) \in \enrichedcat{Proset}$. The idea is that $\Cbox_{path}(S)(a,b)$ has for objects concatenations of
nondegenerate $1$-cubes joining $a$ to $b$ and that the  preorder is induced by the $2$-cubes of $S$.

\begin{notation} For $n\geq 0$, let $\mathrm{I}_n$ be the necklace $(\square^1)^{\vee n}$.
For $n\geq 2$ and $0\leq k\leq n-2$, let $\mathds{I}_{k,n}$ be the concatenation $\mathrm{I}_k\vee\square^2\vee 
\mathrm{I}_{n-2-k}$.
The source and target maps  $s_{k,n},t_{k,n}: \mathrm{I}_n\rightarrow \mathds{I}_{k,n}$ are the morphisms in $\Nec$ defined by
\[s_{k,n}={\id}^{\vee k}\vee (\partial_{1,0}*\partial_{2,1})\vee {\id}^{\vee n-2-k} \text{ and } t_{k,n}={\id}^{\vee k}\vee (\partial_{2,0}*\partial_{1,1})\vee {\id}^{\vee n-2-k}\]
as presented in the following figure:
    \[\xymatrix@R=6pt{
& & & & & \ar[dr]^{B_{k+2}} & & &  &
\\ \mathrm{I}_n \ar@{}[r]|{=} \ar@/_0.4cm/[dd]_{t_{k,n}} & \ar[r]^{B_1} & \ar@{}[r]|{\cdots} & \ar[r]^{B_k} & \ar[ur]^{B_{k+1}} & &  
\ar[r]^{B_{k+3}} & \ar@{}[r]|{\cdots} & \ar[r]^{B_n} &    
\\ & & & & & \ar[dr] & & &  &
\\ \mathds{I}_{k,n} \ar@{}[r]|{=} & \ar[r]^{B_1} & \ar@{}[r]|{\cdots} & \ar[r]^{B_k} & \ar[ur]^{\mathtt{1}} \ar[dr]_{\mathtt{2}} \ar@{}[rr]|{B_{k+1}} & 
&  \ar[r]^{B_{k+2}} & \ar@{}[r]|{\cdots} & \ar[r]^{B_{n-1}} &
\\ & & & & & \ar[ur] & & & &
\\ \mathrm{I}_n \ar@{}[r]|{=} \ar@/_-0.4cm/[uu]^{s_{k,n}} &\ar[r]_{B_1} & \ar@{}[r]|{\cdots} & \ar[r]_{B_k} & \ar[dr]_{B_{k+1}} & &  
\ar[r]_{B_{k+3}} & \ar@{}[r]|{\cdots} & \ar[r]_{B_n} & 
\\ & & & & & \ar[ur]_{B_{k+2}} & & &  &   }\]
  \end{notation}
\begin{defn} \label{path-cat-hom}
Let $S_{a,b}\in\cSdp$.
The set $\Cbox_{path}(S)(a,b)$ of {\sl paths joining $a$ to $b$} is defined as
\[\Cbox_{path}(S)(a,b)=\bigcup_n\Hom(\mathrm{I}_n,S_{a,b})/\sim\]
where $\sim$ is the equivalence relation generated by  $\gamma\sim\gamma'$ if there exists a factorisation in $\cSdp$
    \[\begin{tikzcd}[row sep=14pt]
    \mathrm{I}_n\ar[r, "\gamma"]\ar[d, dashed] & S_{a,b}\\
    \mathrm{I}_m\ar[ur, "\gamma'", swap]
    \end{tikzcd}\]
For  $\gamma:\mathrm{I}_n \to S_{a,b}$ and $\gamma':\mathrm{I}_n \to S_{a,b}$, we write $[\gamma] \leadsto [\gamma']$, if there exists $0\leq k \leq n-2$ and a factorisation in $\cSdp$
    \[
    \begin{tikzcd}[row sep=8pt]
    \mathrm{I}_n \ar[dr, "s_{k,n}", swap] \ar[drr, "\gamma"] &  & \\
    & \mathds{I}_{k,n} \ar[r, dashed] & S_{a,b} \\
    \mathrm{I}_n \ar[ur, "t_{k,n}"] \ar[urr, "\gamma'", swap] &  & \\
    \end{tikzcd}
    \]
 We then define the preorder structure $\Cbox_{path}(S)(a,b)$  as the reflexive transitive closure of $\leadsto$, that we also denote by $\leadsto$.
    \end{defn}

Next proposition lifts the definition at the level of categories enriched in preordered sets, that is, $Proset$-categories.

\begin{prop} \label{Cpath-cat}
Any cubical set $S$ gives rise to a  $Proset$-category  $\Cbox_{path}(S)$ whose objects are the vertices of $S$ and whose homsets are given by Definition \ref{path-cat-hom}, 
with composition given by concatenation of paths. In addition the assignment $S\mapsto \Cbox_{path}(S)$ upgrades to a functor $\Cbox_{path} : c\Set \to \enrichedcat{Proset}$.
\end{prop}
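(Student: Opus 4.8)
The plan is to verify directly that the data described assemble into a $Proset$-category and then into a functor, the main point being that all the relevant structure maps are well-defined on equivalence classes and respect the preorder $\leadsto$. First I would fix the objects: $\Ob(\Cbox_{path}(S)) := S_0$. For the homsets, Definition \ref{path-cat-hom} already gives $\Cbox_{path}(S)(a,b)$ as a set equipped with the preorder $\leadsto$; what remains is to produce identities, composition, and to check the category axioms and monotonicity.

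For identities, the constant path $a\in\Hom(\mathrm{I}_0,S_{a,a})=\Hom(\square^0,S_{a,a})$ (here $\mathrm{I}_0 = \square^0$) gives a distinguished class $[\mathrm{id}_a]\in\Cbox_{path}(S)(a,a)$. For composition, given $\gamma:\mathrm{I}_n\to S_{a,b}$ and $\delta:\mathrm{I}_m\to S_{b,c}$, concatenation $\gamma*\delta:\mathrm{I}_{n+m}\to S_{a,c}$ (Definition \ref{D:concat}) is a path joining $a$ to $c$; I would set $[\gamma]\circ[\delta] := [\gamma*\delta]$ — more precisely $[\delta]\circ[\gamma]$, with whatever order convention the paper adopts for $Proset$-categories. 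The key verifications are then: (i) concatenation descends to $\sim$-classes, which follows because a factorisation $\mathrm{I}_n\to\mathrm{I}_{n'}\to S_{a,b}$ of $\gamma$ concatenates (using $\vee$ of the relevant maps and $\mathrm{id}$ on the other factor) to a factorisation of $\gamma*\delta$, and symmetrically for $\delta$; (ii) concatenation is monotone for $\leadsto$ in each variable, because a witnessing diagram $\mathrm{I}_n\rightrightarrows\mathds{I}_{k,n}\to S_{a,b}$ for $[\gamma]\leadsto[\gamma']$ extends, via $-\vee\mathrm{id}^{\vee m}$ applied to $s_{k,n},t_{k,n}$ and the necklace $\mathds{I}_{k,n}\vee\mathrm{I}_m$, to a witnessing diagram for $[\gamma*\delta]\leadsto[\gamma'*\delta]$ with the same $k$ (and an analogous argument with $\mathrm{id}^{\vee n}\vee-$ and shifted index for the second variable); since $\leadsto$ on the homset is the reflexive–transitive closure, monotonicity passes to it. Associativity of $\circ$ and the unit laws hold already at the level of representing maps, since $\vee$ is strictly associative and $\square^0$ is its strict unit, so the identities $(\gamma*\delta)*\varepsilon=\gamma*(\delta*\varepsilon)$ and $\mathrm{id}_a*\gamma=\gamma=\gamma*\mathrm{id}_b$ hold on the nose before passing to classes. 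This establishes that $\Cbox_{path}(S)$ is a $Proset$-category.

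For functoriality, a map $g:S\to T$ in $c\Set$ sends a vertex $a$ to $g(a)$ and a path $\gamma:\mathrm{I}_n\to S_{a,b}$ to $g\circ\gamma:\mathrm{I}_n\to T_{g(a),g(b)}$; postcomposition with $g$ obviously preserves factorisations through $\mathrm{I}_m$ and through $\mathds{I}_{k,n}$, hence descends to $\sim$-classes and is monotone for $\leadsto$ (and its closure). It preserves concatenation and constant paths strictly, so each $g$ induces a $Proset$-functor $\Cbox_{path}(g):\Cbox_{path}(S)\to\Cbox_{path}(T)$; and $\Cbox_{path}(\mathrm{id})=\mathrm{id}$, $\Cbox_{path}(g'\circ g)=\Cbox_{path}(g')\circ\Cbox_{path}(g)$ are immediate from $(g'\circ g)\circ\gamma = g'\circ(g\circ\gamma)$. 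Hence $\Cbox_{path}:c\Set\to\enrichedcat{Proset}$ is a functor.

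The only genuinely delicate point is the compatibility of the generating relation $\leadsto$ with concatenation, i.e. checking that attaching a fixed necklace to one side of the source/target maps $s_{k,n},t_{k,n}$ again produces legitimate source/target maps of the form $s_{k',n'},t_{k',n'}$ into the correct concatenated necklace $\mathds{I}_{k',n'}$; this is a bookkeeping exercise with indices (replacing $k$ by $k$ or by $k+m$ depending on which side one concatenates, and $n$ by $n+m$) and with the strict associativity of $\vee$, and I expect it to be the main — though routine — obstacle. Everything else is formal once one observes that $\sim$ is generated by postcomposition-stable and concatenation-stable factorisations.
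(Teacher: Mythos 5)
Your proposal is correct and follows essentially the same route as the paper's (much terser) proof: composition is concatenation of representatives, one checks it descends to $\sim$-classes and is monotone for $\leadsto$ by wedging the witnessing factorisations with identities, and functoriality is postcomposition. The only quibble is a parenthetical index slip (concatenating $\mathrm{I}_n$ on the left of a witness $\mathds{I}_{k,m}$ shifts $k$ to $k+n$, not $k+m$), which is exactly the routine bookkeeping you flag and does not affect the argument.
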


\begin{proof}
For $\gamma:\mathrm{I}_n\to S_{a,b}$ and $\beta:\mathrm{I}_m\to S_{b,c}$, the  class of the concatenation 
$\gamma*\beta: \mathrm{I}_{n+m} = \mathrm{I}_n \vee \mathrm{I}_m {\to} S_{a,c}$ does not depend on the choice of the representatives 
$\gamma$ and $\beta$. This defines  a composition $\Cbox_{path}(S)(b,c)\times\Cbox_{path}(S)(a,b)\to \Cbox_{path}(S)(a,c)$ by 
$([\beta],[\gamma])\mapsto [\gamma*\beta]$. Similarly if $[\gamma]\leadsto[\gamma']$ and $[\beta]\leadsto[\beta']$ then 
$[\gamma*\beta]\leadsto[\gamma'*\beta']$, and everything is functorial in $S$.
\end{proof}

\subsection{The path category of a necklace}\label{S:pathcatneck} Let $A$ be a totally ordered set with $k$ elements.  An element in the set of bijections $\Sigma_A$ of 
$A$ is represented by a sequence $(a_1,\ldots,a_k)$ such that $\{a_1,\ldots,a_k\}=A$. We consider the (reverse right) weak Bruhat order on $\Sigma_A$, that is, the order generated by 
\[(a_1,\ldots,a_i,a_{i+1},\ldots,a_k)\leadsto_B (a_1,\ldots,a_{i+1},a_i,\ldots,a_k), \text{ if } 1\leq i<k\; \text{and } a_i>a_{i+1}.\] 
For example, for $A=\{1,2,3\}$  the Hasse diagram of $\leadsto_B$ is given by
\[\xymatrix@R=1em{ & 231\ar[r]  &213\ar[dr] &\\
321\ar[ur] \ar[dr] & & &123\\
& 312\ar[r]  &132\ar[ur] &}\]
Note that, given two disjoint subsets $A$ and $B$  of $\{1,\ldots,n\}$,  the concatenation $*:\Sigma_A\times \Sigma_B\rightarrow \Sigma_{A\sqcup B}$ of sequences is a 
map of posets (for the order $\leadsto_B$), where the total orders on $A$, $B$ and $A\sqcup B$ are induced by that of \{1,\ldots,n\}. We refer to the book  \cite{Bj84} by 
Bj\"orner for more on orders on Coxeter groups.

\begin{lem} \label{unique-representative}
 For all $n,a,b$, each element in $\Cbox_{path}(\square^n)(a,b)$ has a unique 
representative 
$\gamma$ which is a monomorphism, corresponding to
 a sequence 
$a_0=a\prec a_1\prec\ldots\prec a_{l}=b$, with $d(a_i,a_{i+1})=1$. The same holds replacing $\square^n$ with any cubical subset $S$ of 
$\square^n$.
\end{lem}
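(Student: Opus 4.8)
The plan is to prove the two assertions—existence of a monomorphism representative and its uniqueness—by first reducing the general subset case $S\subseteq\square^n$ to the case $S=\square^n$ itself, and then analyzing the relation $\leadsto$ combinatorially on vertex-chains in the cube.

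\textbf{Existence.} Given any $\gamma:\mathrm{I}_m\to S_{a,b}$, I would use Lemma \ref{L:TNDmono}(3) to factor $\gamma=\iota(\gamma)\pi(\gamma)$ through a necklace $\mathrm{I}_m^\gamma$, where $\iota(\gamma)$ is a monomorphism; by Lemma \ref{L:TNDmono}(2) this means each bead is embedded, and since the beads are $1$-cubes $\square^1$, each nondegenerate bead contributes an edge $a_i\prec a_{i+1}$ with $d(a_i,a_{i+1})=1$ by Lemma \ref{L:distance}, while degenerate beads are dropped. Thus $\gamma\sim\iota(\gamma)$ and $\iota(\gamma)$ is a monomorphism, which by Lemma \ref{L:canonical-embedding} (third item) is precisely the data of a strictly increasing chain $a=a_0\prec a_1\prec\cdots\prec a_l=b$ with unit steps. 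This establishes that every class has at least one such representative, and since $S\subseteq\square^n$ is closed under faces, the chain lies in $S$.

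\textbf{Uniqueness.} The heart of the argument is to show that two distinct monomorphism representatives cannot be equivalent under $\sim$. I would first observe that the equivalence relation $\sim$ of Definition \ref{path-cat-hom} and the symmetrization of $\leadsto$ (via the span through $\mathds{I}_{k,n}$) together generate the same equivalence: degenerating a bead never changes the underlying chain, so it suffices to understand how $\leadsto$ acts on monomorphism representatives. A single $\leadsto$-step replaces, inside a $2$-cube sitting at positions $k+1,k+2$, the two edges traversed along one pair of sides by the two edges along the complementary pair. Concretely, if the relevant $2$-face of $\square^n$ spans vertices $u\prec u\cup\{i\}\prec u\cup\{i,j\}$ via the chain through $u\cup\{i\}$, the step replaces it by the chain $u\prec u\cup\{j\}\prec u\cup\{i,j\}$, i.e. it transposes the order in which two coordinates $i,j$ get "switched on." Reading off, for each pair $a\prec b$, the sequence of coordinates added along the chain from $a$ to $b$ gives an element of $\Sigma_{b\setminus a}$ (when $d(a,b)$ equals the number of steps, which holds for monomorphisms), and a $\leadsto$-step is exactly an elementary move generating the weak Bruhat order—in particular it is invertible by another $\leadsto$-step (the transposition is its own inverse at the level of the underlying unordered face). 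Hence on monomorphism representatives, $\sim$ collapses to: same endpoints, same total length $l=d(a,b)$. But a monomorphism representative with $l=d(a,b)$ steps is completely determined by its endpoints together with the induced element of $\Sigma_{b\setminus a}$—no, more carefully: I claim distinct monomorphism chains of the same length from $a$ to $b$ in $\square^n$ are genuinely distinct classes, which I would prove by exhibiting an invariant, OR—and this is the cleaner route—I would invoke the identification announced in the paper's introduction and proved later, that $\Cbox_{path}(\square^n)(a,b)$ is the weak Bruhat order on $\Sigma_{b\setminus a}$; uniqueness of the monomorphism representative is then the statement that each element of $\Sigma_{b\setminus a}$ corresponds to exactly one reduced chain, which is immediate since a permutation $(i_1,\ldots,i_l)$ of $b\setminus a$ determines the chain $a\prec a\cup\{i_1\}\prec a\cup\{i_1,i_2\}\prec\cdots\prec b$ bijectively. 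Since we are asked only for this lemma and may use later results would be circular, I would instead give the direct invariant: the map sending a monomorphism representative to its underlying sequence $(a_0,\ldots,a_l)$ of vertices is injective on representatives, and I must show no two such sequences are $\sim$-related unless equal; but a $\leadsto$-step changes the sequence (it replaces an interior vertex), so distinct sequences of the same length \emph{can} be related—therefore the real content is that the $\leadsto$-relation restricted to monomorphism representatives of fixed endpoints is exactly the weak Bruhat order on $\Sigma_{b\setminus a}$, a poset with a unique minimum and maximum but many elements, so "uniqueness of monomorphism representative" must mean: in the quotient $\bigcup_n\Hom(\mathrm{I}_n,S_{a,b})/\!\sim$, each class contains exactly one monomorphism. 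Re-reading Definition \ref{path-cat-hom}, $\sim$ is \emph{only} the degeneracy relation (the first diagram), \emph{not} $\leadsto$; the preorder $\leadsto$ is extra structure on the quotient set. So uniqueness is simply: two monomorphisms $\mathrm{I}_l\to S_{a,b}$, $\mathrm{I}_{l'}\to S_{a,b}$ that are $\sim$-equivalent are equal. Since $\sim$ is generated by precomposition with epimorphisms $\mathrm{I}_m\twoheadrightarrow\mathrm{I}_{m'}$ in $\Nec$, and a monomorphism cannot factor nontrivially through a proper epimorphism, the reduction is forced and unique.

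\textbf{Main obstacle and cleanup.} The main subtlety is bookkeeping the equivalence relation correctly: $\sim$ in Definition \ref{path-cat-hom} is generated by the degeneracy span (not by $\leadsto$), so the argument is really that the nondegenerate-factorisation of Lemma \ref{L:TNDmono}(3) is both a valid $\sim$-representative \emph{and} canonical—any epimorphism in $\Nec$ out of $\mathrm{I}_m$ collapses some $1$-beads to points (there are no connections or nontrivial degeneracies available on $\square^1$ beyond $\sigma_1$), so the monomorphism part is uniquely recovered, giving existence and uniqueness simultaneously. The passage to an arbitrary subset $S\subseteq\square^n$ is free because all the relevant cubes (the nondegenerate $1$-beads, and the $2$-cubes witnessing $\leadsto$) that appear lie in $S$ iff their vertices do, $S$ being a subpresheaf closed under faces; Proposition \ref{P:NDfactorisation} and Lemma \ref{L:TNDmono} are already stated for subsets of $\square^n$, so nothing new is needed. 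I would close by remarking that this canonical representative is exactly a strictly increasing unit-step chain $a=a_0\prec\cdots\prec a_l=b$, matching the statement, and that it will serve as the normal form used to identify $\Cbox_{path}(\square^n)(a,b)$ with $\Sigma_{b\setminus a}$ in the sequel.
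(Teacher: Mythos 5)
Your proposal is correct and follows essentially the same route as the paper: the normal form is obtained by collapsing the degenerate beads of $\gamma:\mathrm{I}_m\to S_{a,b}$ (equivalently, deleting repetitions in the associated vertex sequence), which is exactly the epi--mono factorisation of Lemma \ref{L:TNDmono}(3), and uniqueness comes from the fact that this normal form is an invariant of the equivalence class. The one point to state cleanly (after trimming the visible self-corrections) is that uniqueness does not follow merely from ``a monomorphism cannot factor through a proper epimorphism'', since $\sim$ is a generated equivalence relation and hence involves zigzags; the actual argument is the one you reach at the end, namely that $\gamma\mapsto\iota(\gamma)$ is constant on each generating pair $\gamma=\gamma'\circ h$ (because $h$ is forced to be an epimorphism and epi--mono factorisations are unique), hence descends to $\sim$-classes, and a monomorphism is its own $\iota$.
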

\begin{proof}
Let $[\gamma']$ be an element of $\Cbox_{path}(\square^n)(a,b)$.
By  Lemma \ref{L:canonical-embedding},  $\gamma'$ corresponds to some sequence $s'=(a'_0=a\preccurlyeq x_1\preccurlyeq\ldots\preccurlyeq a'_{k'}=b)$
such that $d(a'_i,a'_{i+1})\leq 1$. We claim that the desired $\gamma$ is the monomorphism corresponding to the sequence 
$\kappa(s')=(a_0=a\prec a_1\prec\ldots\prec a_{l}=b)$ obtained by eliminating the repetitions in the sequence $s'$.  This is a consequence of the following two easy facts: 
(i) for $\gamma$ as just defined, we have $[\gamma]\sim[\gamma']$, and (ii)
 if $[\gamma'_1]\sim [\gamma'_2]$, with corresponding sequences $s'_1,s'_2$, then $\kappa(s'_1)=\kappa(s'_2)$.\\
 The last part of the statement follows from the observation that if $s'$ above lies in $S$, then so does $\kappa(s')$.
\end{proof}

\begin{prop}\label{P:pathcubes} For every pair of vertices $a\preccurlyeq b$  in $\square^n$, there is an isomorphism of  preordered sets
\[ \Cbox_{path}(\square^n)(a,b)\rightarrow \Sigma_{b\setminus a},\]
compatible with concatenation. As a consequence, the  preorder $\leadsto$ on paths of a cube is a partial order, isomorphic to the weak 
Bruhat order $\leadsto_B$ on the symmetric group.

\end{prop}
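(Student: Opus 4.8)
The plan is to exhibit the isomorphism concretely and then deduce the structural statement. By Lemma \ref{unique-representative}, every element of $\Cbox_{path}(\square^n)(a,b)$ has a unique monomorphic representative, which by Lemma \ref{L:canonical-embedding} corresponds to a strictly increasing chain $a = a_0 \prec a_1 \prec \dots \prec a_l = b$ with $d(a_{i-1},a_i)=1$. Since $d(a,b) = |b\setminus a| =: k$, such a chain must have length exactly $l = k$, and each step $a_{i-1}\prec a_i$ adds a single element of $b\setminus a$. Reading off these elements in order yields a sequence $(c_1,\dots,c_k)$ enumerating $b\setminus a$, i.e.\ an element of $\Sigma_{b\setminus a}$. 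First I would check this assignment $\Phi:\Cbox_{path}(\square^n)(a,b)\to\Sigma_{b\setminus a}$ is a well-defined bijection: injectivity is clear because the chain is recovered from the partial sums $a_i = a\cup\{c_1,\dots,c_i\}$, and surjectivity because any enumeration of $b\setminus a$ produces such a chain lying in $\square^n$.

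Next I would verify that $\Phi$ is a morphism of preordered sets in both directions, which amounts to matching the covering relations. On the path side, $[\gamma]\leadsto[\gamma']$ is generated (Definition \ref{path-cat-hom}) by the existence of a $2$-cube filler, i.e.\ by replacing a sub-path $a_{i-1}\prec a_i \prec a_{i+1}$ going ``right then up'' in some $2$-face by the one going ``up then right''. Concretely, if the $\gamma$-chain does step $c$ then $c'$ between $a_{i-1}$ and $a_{i+1}$, the $\gamma'$-chain does $c'$ then $c$; passing through $\mathds{I}_{k,n}$ via $s_{k,n},t_{k,n}$ realizes exactly this swap. In $\Sigma_{b\setminus a}$ this is precisely the generating relation $(\dots,c,c',\dots)\leadsto_B(\dots,c',c,\dots)$ — except that the weak Bruhat order only allows the swap when $c>c'$. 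The key point to nail down is the orientation/asymmetry: I must check that a $2$-cube of $\square^n$ with the prescribed pointing exists for the pair $(s_{k,n},t_{k,n})$ in exactly one of the two orders of $\{c,c'\}$, and that this matches the convention $c>c'$ fixed in Section \ref{S:pathcatneck}. This is a direct computation with the faces $\partial_{1,0}*\partial_{2,1}$ versus $\partial_{2,0}*\partial_{1,1}$ inside a $2$-face of the cube, identifying which diagonal of the square is the ``$s$'' side and which is the ``$t$'' side.

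From the generating relations agreeing on both sides, $\Phi$ and $\Phi^{-1}$ are monotone, so $\Phi$ is an isomorphism of preordered sets $\Cbox_{path}(\square^n)(a,b)\cong\Sigma_{b\setminus a}$ equipped with $\leadsto_B$. Compatibility with concatenation follows from the observation recalled in Section \ref{S:pathcatneck} that $*:\Sigma_A\times\Sigma_B\to\Sigma_{A\sqcup B}$ is a poset map: if $a\preccurlyeq b\preccurlyeq c$ in $\square^n$ then $(c\setminus b)$ and $(b\setminus a)$ are disjoint with union $c\setminus a$, and concatenating monomorphic representatives corresponds to concatenating the enumerating sequences, matching the composition in $\Cbox_{path}$. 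Finally, since $\leadsto_B$ is a genuine partial order on $\Sigma_{b\setminus a}$ (a standard fact about the weak Bruhat order, e.g.\ \cite{Bj84}), the preorder $\leadsto$ on $\Cbox_{path}(\square^n)(a,b)$ is antisymmetric, hence a partial order isomorphic to the weak Bruhat order, which is the second assertion.

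I expect the main obstacle to be the bookkeeping in the second step: tracking the pointing of the $2$-cube through the maps $s_{k,n}$ and $t_{k,n}$ carefully enough to see that the generating moves $\leadsto$ correspond to the \emph{oriented} covers $\leadsto_B$ (with the strict inequality $a_i>a_{i+1}$) rather than to an unoriented adjacent transposition — getting this asymmetry right, and confirming it is the reverse/right convention stated in the paper, is where the real content lies. Everything else (well-definedness, bijectivity, compatibility with concatenation) is routine once Lemmas \ref{unique-representative} and \ref{L:canonical-embedding} are in hand.
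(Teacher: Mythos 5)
Your plan is correct and follows essentially the same route as the paper's proof: the unique monomorphic representatives of Lemma \ref{unique-representative} give the bijection with $\Sigma_{b\setminus a}$, and one checks that the generating moves on the two sides match, the only real content being the computation that $f\circ s_{k,m}$ traverses the larger of the two coordinates of the $2$-face first, so that $\leadsto$ points in the same direction as $\leadsto_B$. The one point you gloss over is that a witness $f:\mathds{I}_{k,m}\to\square^n_{a,b}$ need not send the distinguished $2$-cube to a nondegenerate square (so the associated vertex sequences are $\preccurlyeq$-chains rather than strict chains); in that degenerate case one checks that $[f\circ s_{k,m}]=[f\circ t_{k,m}]$ after eliminating repetitions, so monotonicity of $\Psi$ still holds --- the paper treats this case explicitly.
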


\begin{proof}  Assume $a\preccurlyeq b$.  With the notation of  Lemma \ref{unique-representative}, we can associate with each 
$[\gamma]\in\Cbox_{path}(\square^n)(a,b)$ a sequence 
$a_0=a\prec a_1\prec\ldots\prec a_l=b$, with $d(a_i,a_{i+1})=1$. We denote by $x_i$ the unique element in $a_{i+1}\setminus a_i=\{x_i\}$, so that
$\{x_1,\ldots,
x_l\}=b\setminus a$. Then the map
$\Psi: \Cbox_{path}(\square^n)(a,b)\rightarrow \Sigma_{b\setminus a}$ sending $[\gamma]$ to the sequence $(x_1,\ldots,x_l)$ in $
\Sigma_{b\setminus a}$ is  well defined and bijective. \\
Let $f: \mathds{I}_{k,m}\rightarrow \square^n_{a,b}$ in $\cSdp$, witnessing $[f\circ s_{k,m}] \leadsto [f\circ t_{k,m}]$. Let
\[\begin{cases}
a_0\preccurlyeq\ldots\preccurlyeq a_k\preccurlyeq a_{k+1}\preccurlyeq a_{k+2}\preccurlyeq \ldots\preccurlyeq a_m\\
  a_0\preccurlyeq\ldots\preccurlyeq a_k\preccurlyeq a'_{k+1}\preccurlyeq a_{k+2}\preccurlyeq \ldots\preccurlyeq a_m
\end{cases}\]
be the sequences corresponding to $f\circ s_{k,m}$ and $f\circ t_{k,m}$, respectively.
If $d(a_k,a_{k+2})=2$, then there exists $u<v$ such that $a_{k+2}\setminus a_k=\{u,v\}$, $a_{k+1}\setminus a_k=\{v\}$ and 
$a'_{k+1}\setminus a_k=\{u\}$, hence $\Psi(f\circ s_{k,m})\leadsto_B \Psi(f\circ t_{k,m})$. If $d(a_k,a_{k+2})<2$, then 
$$[f\circ s_{k,m}]=[\gamma]=[f\circ 
t_{k,m}],$$
where $\gamma$ corresponds to $a_0\preccurlyeq\ldots\preccurlyeq a_k\prec a_{k+2}\preccurlyeq \ldots\preccurlyeq a_m$. Hence $\Psi$ is a morphism of  preordered sets. 
Similarly, and even more straightforwardly, we see that $\Psi^{-1}$ is also a morphism of prosets, which in particular implies that $\Cbox_{path}(\square^n)(a,b)$ is 
 a poset. 
\end{proof}

We also observe that if $a\not\preccurlyeq b$, there is no morphism $\mathrm{I}_m\rightarrow (\square^n)_{a,b}$, and that the constant path is the unique path in 
 $\Cbox_{path}(\square^n)(a,a)$.
Hence $\Cbox_{path}(\square^n)$ is a $P$-shaped poset-category, with $P$ the subset lattice of $\{1,\ldots,n\}$. We refer to  Appendix 
\ref{S:Pshapedcat} for this notion, and for the description of the   concatenation product $\vee$ on such categories used in the following proposition.

\begin{cor}\label{C:pathneck} Let $T=\square^{n_1}\vee\ldots\vee\square^{n_k}$  be a necklace.
We have:
\begin{enumerate}
\item\label{cor_neck_1} For $a\preccurlyeq b\in T_0$, the inclusion $T_{[a,b]} \subseteq T_{a,b}$ induces an isomorphism of posets $\Cbox_{path}(T)(a,b) \cong \Cbox_{path}
(T_{[a,b]})(\alpha,\omega)$.
\item\label{cor_neck_2} If $T = U\vee V$, the composition in the poset category $\Cbox_{path}(T)$ provides a morphism 
$\Cbox_{path}(V)(\alpha_V,\omega_V)\times\Cbox_{path}(U) (\alpha_U,\omega_U)\to \Cbox_{path}(T)(\alpha_T,\omega_T)$, which is an isomorphism of posets.
\item  $\Cbox_{path}(T)$ is a poset-category and we have an isomorphism of poset-categories
\[ \Cbox_{path}(T)\cong\Cbox_{path}(\square^{n_1})\vee\ldots\vee \Cbox_{path}(\square^{n_k}).\]
\end{enumerate}
 \end{cor}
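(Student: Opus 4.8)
The plan is to derive all three items from Proposition \ref{P:pathcubes} together with the two structural lemmas on necklaces, namely Lemma \ref{L:canonical-embedding} and Proposition \ref{univ_prop_subneck}. For item \eqref{cor_neck_1}, I would first observe that by Proposition \ref{univ_prop_subneck} any path $\mathrm{I}_m \to T_{a,b}$ factors uniquely through the terminal object $\iota^T_{a,b} : T_{[a,b]} \hookrightarrow T_{a,b}$ of $\slicecat{\Nec}{T_{a,b}}$; since $\mathrm{I}_m$ is itself a necklace (doubly pointed by $\alpha, \omega$), this gives a bijection on representatives $\Hom(\mathrm{I}_m, T_{[a,b]}_{\alpha,\omega}) \cong \{\gamma : \mathrm{I}_m \to T_{a,b}\}$, compatible with the factorisations witnessing $\sim$ and the factorisations $s_{k,m}, t_{k,m}$ witnessing $\leadsto$ (because these too factor through the terminal object). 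Hence the induced map $\Cbox_{path}(T_{[a,b]})(\alpha,\omega) \to \Cbox_{path}(T)(a,b)$ is an isomorphism of prosets; that it is a poset follows from the last step below.

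For item \eqref{cor_neck_2}, write $T = U \vee V$ with $U = \square^{n_1}\vee\cdots\vee\square^{n_j}$, $V = \square^{n_{j+1}}\vee\cdots\vee\square^{n_k}$, and let $m$ be the gluing vertex $\omega_U = \alpha_V$ in $T_0$. Concatenation of paths gives the composition map in $\Cbox_{path}(T)$, namely $\Cbox_{path}(T)(m,\omega_T)\times\Cbox_{path}(T)(\alpha_T,m) \to \Cbox_{path}(T)(\alpha_T,\omega_T)$; by item \eqref{cor_neck_1} applied to the subnecklaces $T_{[\alpha_T,m]} = U$ and $T_{[m,\omega_T]} = V$, the two factors are $\Cbox_{path}(U)(\alpha_U,\omega_U)$ and $\Cbox_{path}(V)(\alpha_V,\omega_V)$. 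To see this map is a bijection I would argue that every path $\mathrm{I}_n \to T_{\alpha_T,\omega_T}$ is, by Proposition \ref{P:Nec_morphisms}(1) (the chain of vertices it determines) together with the description of $T_0 = U_0 \vee V_0$, forced to pass through the cut vertex $m$ and hence splits uniquely (up to $\sim$) as a concatenation of a path in $U$ and a path in $V$; one then checks this splitting is compatible with the relations $\leadsto$ on both sides, which again reduces to Proposition \ref{univ_prop_subneck} since each elementary relation $s_{k,n}, t_{k,n}$ involves a single $\square^2$ bead that lies entirely in $U$ or entirely in $V$. Order-preservation in both directions follows, giving an isomorphism of posets.

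For item \eqref{cor_neck_3}, I would first record that $\Cbox_{path}(\square^{n_i})$ is a $P_i$-shaped poset-category with $P_i$ the subset lattice of $\{1,\ldots,n_i\}$ — this is exactly the remark following Proposition \ref{P:pathcubes}, using that $\Cbox_{path}(\square^{n_i})(a,b)\cong\Sigma_{b\setminus a}$ is a poset and is empty unless $a \preccurlyeq b$ and is a point when $a=b$. Then $T_0 = (\square^{n_1})_0\vee\cdots\vee(\square^{n_k})_0$ is the $\vee$-concatenation of the $P_i$ (a bounded poset), and one checks directly from the definition of $\Cbox_{path}$ that $\Cbox_{path}(T)$ is $T_0$-shaped: its homsets are empty off the order $\preccurlyeq$ of $T_0$ and a point on the diagonal (using the final paragraph before the corollary, extended to necklaces via item \eqref{cor_neck_1}). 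The asserted isomorphism of poset-categories is then precisely the statement that, for $a \preccurlyeq b$ spanning several beads, $\Cbox_{path}(T)(a,b)$ decomposes as the product of the homsets of the intermediate $\Cbox_{path}(\square^{n_i})$ — which is item \eqref{cor_neck_1} (identifying $T_{[a,b]}$) followed by an iterated application of item \eqref{cor_neck_2} to the beads of $T_{[a,b]}$ — and that this decomposition is compatible with composition, which holds because composition in $\Cbox_{path}(T)$ is concatenation and the decomposition of a concatenated path is the concatenation of the decompositions.

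The main obstacle I anticipate is the compatibility of the path-splitting with the generating relations $\leadsto$ in item \eqref{cor_neck_2}: one must be careful that an elementary move $s_{k,n} \leadsto t_{k,n}$, whose $\square^2$ bead might in principle straddle the cut, in fact cannot — the cut vertex $m$ is a vertex of $T_0$ distinct from $\alpha,\omega$ of any single bead, so any nondegenerate $\square^2$ in $T$ lies in a single $\square^{n_i}$ and hence on one side of the cut. Making this precise (and likewise that the monomorphism representatives of Lemma \ref{unique-representative} split correctly) is where the care is needed; everything else is a bookkeeping consequence of Proposition \ref{univ_prop_subneck} and Proposition \ref{P:pathcubes}.
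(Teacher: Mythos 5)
Your plan reproduces the paper's proof essentially step for step: item (1) via the terminality of $T_{[a,b]}$ (Proposition \ref{univ_prop_subneck}) applied both to paths and to the witnesses $\mathds{I}_{k,n}$; item (2) via the splitting of paths and of elementary moves at the cut vertex using Proposition \ref{P:Nec_morphisms} and Lemma \ref{unique-representative}; item (3) by iterating and invoking Proposition \ref{P:V-concat}. The only cosmetic difference is that the paper gets the bijection in (2) from the unique monomorphism representative inside the standard embedding $\square^{n_1}\vee\square^{n_2}\hookrightarrow\square^{n_1+n_2}$, whereas you argue directly that a path must pass through the cut vertex --- the same idea.
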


\begin{proof} $(1)$ 
By Definition \ref{D:T_ab} and Proposition 
\ref{univ_prop_subneck}, a path $\gamma$ 
joining $a$ to $b$ in $T$ is equivalent to a morphism $\mathrm{I}_n\rightarrow T_{[a,b]}$, where $T_{[a,b]}$ is a necklace. By Proposition \ref{univ_prop_subneck}, 
any map $\mathds{I}_{k,n} \to T_{a,b}$ factorises through $T_{[a,b]}$, hence the result. \\
$(2)$ Let us prove that the morphism of prosets  induced by composition/concatenation
\[\Cbox_{path}(\square^{n_2})(\alpha_2,\omega_2)\times\Cbox_{path}(\square^{n_1}) (\alpha_1,\omega_1)\to \Cbox_{path}(\square^{n_1}\vee\square^{n_2})
(\alpha_1,\omega_2)\]
is an isomorphism of prosets. By Lemma \ref{unique-representative}, and viewing $\square^{n_1}\vee\square^{n_2}$ as a cubical subset of 
$\square^{n_1+n_2}$ via the standard embedding, any element in the right hand side admits a unique representative $\gamma:\mathrm{I}_l\to \square^{n_1}\vee\square^{n_2}$ 
which is a monomorphism. 

Since $\gamma$ preserves  $\alpha$ and $\omega$, we have $l=n_1+n_2$ and thus $\gamma=\gamma_1\vee\gamma_2$ 
is the unique decomposition  provided by Proposition \ref{P:Nec_morphisms}. Hence the morphism is a bijection. We have to prove that 
$[\gamma]\leadsto [\gamma']$ implies $[\gamma_1]\leadsto [\gamma'_1]$ and $[\gamma_2]\leadsto [\gamma'_2].$
Any $f:\mathds{I}_{k,m}\rightarrow \square^{n_1}\vee\square^{m_1}$ factors as $f=f_1\vee f_2$, where either $f_1$  or $f_2$ is a path. 
It implies that if $[\gamma]\leadsto [\gamma']$ then either $[\gamma_1]\leadsto [\gamma'_1]$ and $[\gamma_2]=[\gamma'_2]$, 
or $[\gamma_2]\leadsto [\gamma'_2]$ and $[\gamma_1]=[\gamma'_1]$. In conclusion,  (2) holds, since the left hand side of the morphism is 
a poset. \\
$(3)$ It is clear that this generalises to any  finite wedge product of cubes. In particular, if 
$T_{[a,b]}:= \square^{d(a,\omega_i)}\vee\square^{n_{i+1}}\vee\dots\vee\square^{n_{j-1}}\vee\square^{d(\alpha_j,b)}$, 
we have
$$\begin{array}{llll}
\Cbox_{path}(T)(a,b)\\
\; = \Cbox_{path}(T_{[a,b]})(\alpha,\omega) & \mbox{(by (1))}\\
\; \cong   \Cbox_{path}(\square^{d(a,\omega_i)})(\alpha_i,\omega_i) \times \dots \times\Cbox_{path}(\square^{d(\alpha_j,b)})(\alpha_j,
\omega_j) & 
  \mbox{(by (2))}\\
 \; =  \Cbox_{path}(\square^{n_i})(a,\omega_i)\times\dots\
 \times\Cbox_{path}(\square^{n_j})(\alpha_j,b) & \mbox{(by (1))}\\
 \; \cong  (\Cbox_{path}(\square^{n_i})\vee\dots\vee\Cbox_{path}(\square^{n_j}))(a,b)  & \mbox{(by Proposition \ref{P:V-concat})}.
 \end{array}
$$  
Finally, we note that having established this isomorphism a fortiori implies that $\Cbox_{path}(T)$ is poset-enriched, since all 
$\Cbox_{path}(\square^{n_i})$ are.
\end{proof}

Applying the nerve functor from Proset-categories to simplicial categories, we get the functor $N\circ\Cbox_{path}:c\Set\rightarrow s\Cat$. 
Unfortunately it is not cocontinuous, as we  show in Example  \ref{(C-path-not-continuous}, so that it cannot serve as a left functor in a Quillen equivalence. The next section is devoted to build  such a functor and to study its properties.

\begin{ex} \label{(C-path-not-continuous}
In this example, all simplicial categories involved have only one (possibly) non-trivial mapping space, and hence reduce to simplicial sets.
Consider the quotient $\tilde\Box^2$ of $\Box^2$ obtained by colllapsing the edges between $(0,0)$ and $(0,1)$, and between  $(1,0)$ and $(1,1)$, and 
consider the pushout $X$ of the two horizontal inclusions $\partial_{2,0},\partial_{2,1}:\Box^1\rightarrow\tilde\Box^2$. The cubical set $X$ can be represented as follows
\[\xymatrix{a\ar@/^1.5pc/[rr]|u\ar[rr]|v\ar@/_1.5pc/[rr]|w &&b}\]
with two nondegenerate 2-cubes inducing $u \leadsto v\leadsto w$ in $\Cbox_{path}(X)$. It follows that $N(\Cbox_{path}(X))$ is not 1-skeletal.
However $N(\Cbox_{path}(\Box^1))$ and $N(\Cbox_{path}(\tilde\Box^2))$ are 1-skeletal. But a pushout of 1-skeletal simplicial sets is 1-skeletal, so the pushout of 
$N(\Cbox_{path}(\partial_{2,0}))$ and $N(\Cbox_{path}(\partial_{2,1}))$ cannot be $N(\Cbox_{path}(X))$.
The point of this counter-example is that taking a colimit in $c\Set$ may result in merging of orders: $X$ sees $u\leadsto v\leadsto w$,  while each copy of $\tilde\Box^2$ 
sees only $u\leadsto v$ or $v\leadsto w$. By applying the functor $N\circ \Cbox_{Path}$ before the colimit functor, we lose this piece of magic!
\end{ex}

\section{The  rigidification functor \texorpdfstring{$\Cbox$}{}} \label{rigidification-section}

In this section, we define rigidification  as a left Kan extension of the restriction of $N\circ \Cbox_{Path}$ to the cubes $\Box^n$, and provide concrete descriptions of its  
simplicial homsets, making an essential use of necklaces (see Remark \ref{the-why-of-necklaces}).

\subsection{Definition of the rigidification}
The rigidification functor is defined as the left Kan extension along the Yoneda functor $Y:\square \to c\Set$ of the composition \[\square \stackrel{Y}\to c\Set 
\stackrel{\Cbox_{path}}\to \enrichedcat{Proset} \stackrel{N}\to s\Cat\]
By usual means, we obtain an adjunction $\Cbox \colon c\Set\rightleftarrows s\Cat \colon \Nbox$. The simplicial category $\Cbox(S)$ is obtained as the colimit 
over the category of elements of $S$ of  
some $\Cbox(\square^n)$.

\begin{lem}\label{L:cocontinuous}
For every cubical set $S$, the set of objects of the simplicial category $\Cbox(S)$ is in bijection with $S_0$, and thus will be identified with it.
\end{lem}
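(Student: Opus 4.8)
The plan is to exploit the cocontinuity of the object functor $\Ob\colon s\Cat\to\Set$, exactly as recorded in Remark \ref{R:objets}. Since $\Cbox$ is defined as a left Kan extension along the Yoneda embedding, it is cocontinuous, and $\Cbox(S)$ is computed as the colimit $\colim_{(\square^n\to S)}\Cbox(\square^n)$ indexed by the category of elements $\square\downarrow S$ of $S$. Composing with $\Ob$ and using that $\Ob$ is a left adjoint (hence preserves colimits), we get $\Ob(\Cbox(S))\cong\colim_{(\square^n\to S)}\Ob(\Cbox(\square^n))$, the colimit taken in $\Set$.

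First I would identify $\Ob(\Cbox(\square^n))$. By the construction of $\Cbox$ on representables, $\Cbox(\square^n)=N(\Cbox_{path}(\square^n))$, and by Proposition \ref{Cpath-cat} the objects of the $Proset$-category $\Cbox_{path}(\square^n)$ are the vertices of $\square^n$; the nerve functor on enriched categories does not change the object set, so $\Ob(\Cbox(\square^n))=(\square^n)_0$. Hence the colimit above is $\colim_{(\square^n\to S)}(\square^n)_0$, the colimit in $\Set$ of the functor sending an element $\square^n\to S$ to its vertex set.

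Next I would observe that this last colimit is precisely $S_0$. Indeed, the functor $(\square^n\to S)\mapsto(\square^n)_0$ on $\square\downarrow S$ is, up to the canonical identification, the composite of the forgetful functor $\square\downarrow S\to\square$ with the functor $[1]^m\mapsto([1]^m)_0=\Hom_{\square}([1]^0,[1]^m)$; equivalently it is the restriction along $\square\downarrow S\to\square$ of the representable $\square^0=Y([1]^0)$ viewed as a cubical set evaluated on objects. The standard colimit-of-representables formula (the density/co-Yoneda argument) gives $\colim_{(\square^n\to S)}(\square^n)_0\cong S([1]^0)=S_0$, naturally in $S$. Chaining the three isomorphisms yields $\Ob(\Cbox(S))\cong S_0$, and naturality makes the identification canonical.

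The argument is essentially formal, so there is no serious obstacle; the only point requiring a little care is the bookkeeping of the last step — making sure that the colimit over $\square\downarrow S$ of the ``vertex-set'' functor really is the colimit computing $S$ evaluated at $[1]^0$, rather than something larger. This is handled by noting that $S\cong\colim_{(\square^n\to S)}\square^n$ in $c\Set$, that colimits in $c\Set$ are computed pointwise, and that evaluation at $[1]^0$ therefore commutes with this colimit, so $S_0=\colim_{(\square^n\to S)}(\square^n)_0$. Alternatively, one can cite Remark \ref{R:objets} verbatim, since the situation is the special case $I=\square\downarrow S$, $F=(\square^n\to S)\mapsto\Cbox(\square^n)$ of the general fact stated there, combined with $\Ob(\Cbox(\square^n))=(\square^n)_0$.
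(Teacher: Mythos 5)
Your proof is correct and follows exactly the paper's route: cocontinuity of $\Ob\colon s\Cat\to\Set$ (Remark \ref{R:objets}), the identification $\Ob(\Cbox(\square^n))=(\square^n)_0$ on representables (Proposition \ref{Cpath-cat}), and the co-Yoneda computation of the resulting colimit as $S_0$. The paper's proof is just a terser version of the same argument; your extra bookkeeping in the last step is the part the paper leaves implicit.
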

\begin{proof}
By Remark \ref{R:objets}, the functor $s\Cat \stackrel{Ob}{\to} \Set$ is cocontinuous. We conclude since the statement holds on cubes  by definition (cf. Proposition \ref{Cpath-cat}).
\end{proof}

\begin{notation} The previous lemma implies that the rigidification functor lifts to a functor $\Cbox : \cSdp \to \sCdp$.  We denote by $\Cbox_t$  
the functor from $\cSdp$ to $s\Set$ defined on objects by 
\[\Cbox_t(S_{a,b}) = \Cbox(S)(a,b),\ \forall a,b\in S_0.\]
\end{notation}

\begin{lem}\label{Ct_cubes_contractible}
The space $\Cbox_t(\square^n)$ is contractible.
\end{lem}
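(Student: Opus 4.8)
The plan is to compute $\Cbox_t(\square^n) = \Cbox(\square^n)(\alpha,\omega)$ directly using the identification of the path poset with a weak Bruhat order. Recall that $\Cbox(\square^n)(a,b) = N(\Cbox_{path}(\square^n)(a,b))$, and by Proposition \ref{P:pathcubes} we have $\Cbox_{path}(\square^n)(\alpha,\omega) \cong \Sigma_{\{1,\ldots,n\}}$ equipped with the weak Bruhat order $\leadsto_B$. So it suffices to show that the nerve of the symmetric group $\Sigma_n$ equipped with the (reverse right) weak Bruhat order is contractible as a simplicial set.

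The weak Bruhat order on $\Sigma_n$ is a bounded poset: it has a least element (the identity permutation $(1,2,\ldots,n)$, or the top of the order depending on orientation) and a greatest element (the longest element $w_0 = (n, n-1, \ldots, 1)$). Indeed, this is explicitly visible in the $n=3$ Hasse diagram drawn in the excerpt, where $321$ is the bottom and $123$ is the top. For \emph{any} poset $P$ with a least element $\hat 0$ (or dually a greatest element), the nerve $N(P)$ is contractible: the constant functor at $\hat 0$ together with the identity functor are connected by a natural transformation (the unique arrows $\hat 0 \le x$), so $\mathrm{id}_{N(P)}$ is homotopic to a constant map, giving a contraction. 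This is the cleanest route. Concretely, I would invoke the standard fact that a poset with a maximum or minimum has contractible nerve (e.g. via the natural transformation argument, or by noting that $\hat 0$ is a terminal/initial object so the category is "contractible"), and apply it to $\Cbox_{path}(\square^n)(\alpha,\omega)$ which has least element the path along a fixed maximal chain.

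So the key steps, in order, are: (1) by definition $\Cbox_t(\square^n) = \Cbox(\square^n)(\alpha,\omega) = N(\Cbox_{path}(\square^n)(\alpha,\omega))$; (2) by Proposition \ref{P:pathcubes}, $\Cbox_{path}(\square^n)(\alpha,\omega)$ is a poset, isomorphic to $\Sigma_{\{1,\ldots,n\}}$ with the weak Bruhat order; (3) observe this poset has a least element, namely the class of any monomorphism $\mathrm{I}_n \to \square^n$ realizing a fixed maximal chain from $\alpha$ to $\omega$ (equivalently, the identity permutation $(1,2,\ldots,n) \in \Sigma_n$, which satisfies $(1,\ldots,n) \leadsto_B \sigma$ for all $\sigma$ since no adjacent transposition applies to decrease it — or dually the longest element is the top, whichever matches the convention; in the reverse right weak Bruhat order as stated, $(1,\ldots,n)$ is the unique maximal element, so use that instead); (4) conclude that its nerve is contractible by the standard argument that a poset with a top (or bottom) element has contractible nerve.

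The main obstacle is essentially bookkeeping: pinning down which element of $\Sigma_n$ is the extremal one under the specific "(reverse right) weak Bruhat order" convention fixed in Section \ref{S:pathcatneck} (where $(a_1,\ldots,a_i,a_{i+1},\ldots) \leadsto_B (a_1,\ldots,a_{i+1},a_i,\ldots)$ when $a_i > a_{i+1}$). Under that convention the sorted sequence $(1,2,\ldots,n)$ admits no such move, so it is maximal; the anti-sorted sequence $(n,\ldots,1)$ is minimal. Either way one extremum exists, and the contraction argument is symmetric in top versus bottom, so the contractibility conclusion is unaffected. No nontrivial computation is required beyond citing Proposition \ref{P:pathcubes} and the elementary fact about nerves of bounded posets.
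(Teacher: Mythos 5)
Your proof is correct and follows essentially the same route as the paper's: identify $\Cbox_t(\square^n)$ with the nerve of $\Sigma_{\{1,\ldots,n\}}$ under the weak Bruhat order via Proposition \ref{P:pathcubes}, and conclude by the standard fact that a bounded poset has contractible nerve. The extra bookkeeping about which permutation is the top versus the bottom element is harmless but unnecessary, since the weak Bruhat order is bounded on both ends.
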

\begin{proof}
We have $\Cbox_t(\square^n)=\Cbox(\square^n)(\alpha,\omega)\cong N(\Sigma_{\{1,\ldots,n\}})$ by Proposition \ref{P:pathcubes} with the 
weak Bruhat order on $\Sigma_{\{1,\ldots,n\}}$. The latter is a bounded poset (see Definition \ref{bounded-poset}), hence its nerve is contractible.
\end{proof}

A direct application of Corollaries \ref{C:pathneck} and  \ref{C:nerveconcat} is the following theorem.

\begin{thm}\label{rigid_neck}
For all necklaces $T$, there is an isomorphism of simplicial categories
\[\Cbox(T) \cong N(\Cbox_{path}(T)).\]
In addition, if $T = U\vee V$, the composition in the simplicial category $\Cbox(T)$ provides a morphism $\Cbox_t(V)\times\Cbox_t(U) \to \Cbox_t(T)$, which is an 
isomorphism of simplicial sets.
\end{thm}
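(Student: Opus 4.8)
The plan is to deduce the theorem directly from the two corollaries it cites, so the main work is reducing the statement to facts already available rather than proving anything new from scratch. First I would recall that $\Cbox$ is defined as a left Kan extension along $Y : \square \to c\Set$ of $N \circ \Cbox_{path}$, so that on a representable cube $\square^n$ one has $\Cbox(\square^n) \cong N(\Cbox_{path}(\square^n))$ by the usual fact that left Kan extension along Yoneda restricts back to the original functor on representables. The key point is then to propagate this from cubes to necklaces. A necklace $T = \square^{n_1} \vee \dots \vee \square^{n_k}$ is a colimit in $c\Set$ of its beads glued along vertices; more precisely $T$ is built from the representables $\square^{n_i}$ and $\square^0$ via the colimit presenting the wedge. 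Since $\Cbox$ is cocontinuous (being a left adjoint), $\Cbox(T)$ is the corresponding colimit of the $\Cbox(\square^{n_i}) \cong N(\Cbox_{path}(\square^{n_i}))$ in $s\Cat$.

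Next I would invoke Corollary \ref{C:pathneck}(3), which gives the isomorphism of poset-categories $\Cbox_{path}(T) \cong \Cbox_{path}(\square^{n_1}) \vee \dots \vee \Cbox_{path}(\square^{n_k})$, where $\vee$ on the right is the concatenation product of $P$-shaped poset-categories defined in Appendix \ref{S:Pshapedcat}. Applying the (monoidal) nerve functor $N$ and using Corollary \ref{C:nerveconcat} — which I read as asserting that $N$ sends the concatenation $\vee$ of poset-categories to the corresponding wedge/concatenation of simplicial categories, and in particular commutes with the relevant colimit — yields $N(\Cbox_{path}(T)) \cong N(\Cbox_{path}(\square^{n_1})) \vee \dots \vee N(\Cbox_{path}(\square^{n_k}))$. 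Comparing the two colimit descriptions, both $\Cbox(T)$ and $N(\Cbox_{path}(T))$ are the same iterated wedge of the $N(\Cbox_{path}(\square^{n_i}))$ in $s\Cat$, and on objects both agree with $T_0$ by Lemma \ref{L:cocontinuous} and the notation for $T_0$. One must check that the comparison map $\Cbox(T) \to N(\Cbox_{path}(T))$ — induced by the universal property of the colimit, or equivalently coming from the counit of the Kan extension adjunction evaluated bead by bead — is compatible with the gluing maps, which is where a small diagram chase is needed, but it reduces to the statement on cubes.

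For the second half of the statement, with $T = U \vee V$: by the first half, $\Cbox(T) \cong N(\Cbox_{path}(T))$, and by Corollary \ref{C:pathneck}(3) applied to the splitting into $U$-beads followed by $V$-beads (or by Corollary \ref{C:pathneck}(2) directly when $U, V$ are cubes, then iterated), $\Cbox_{path}(T)(\alpha_T, \omega_T) \cong \Cbox_{path}(V)(\alpha_V, \omega_V) \times \Cbox_{path}(U)(\alpha_U, \omega_U)$ as posets, with the isomorphism realized by composition in $\Cbox_{path}(T)$. Applying $N$, which preserves finite products, turns this into the asserted isomorphism $\Cbox_t(V) \times \Cbox_t(U) \to \Cbox_t(T)$ of simplicial sets, and the fact that it is induced by composition in the simplicial category $\Cbox(T)$ follows because $N$ is a monoidal functor, hence takes composition in $\Cbox_{path}(T)$ to composition in $\Cbox(T) \cong N(\Cbox_{path}(T))$.

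The main obstacle I anticipate is making the cocontinuity argument fully rigorous: I need that the colimit in $c\Set$ presenting the wedge $\square^{n_1} \vee \dots \vee \square^{n_k}$ is sent by the cocontinuous functor $\Cbox$ to precisely the colimit in $s\Cat$ that Corollary \ref{C:nerveconcat} identifies with the wedge $N(\Cbox_{path}(\square^{n_1})) \vee \dots$, i.e. that the two notions of "wedge of $P$-shaped categories along basepoints" (one computed in $s\Cat$ as a colimit, one as the explicit concatenation product of Appendix \ref{S:Pshapedcat}) coincide. This is a matter of matching up cocone data and should follow formally once the appendix material on $P$-shaped poset-categories and their concatenation is in hand, but it is the step where one actually has to write something down rather than just cite; everything else is a direct chain of the already-established isomorphisms.
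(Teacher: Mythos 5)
Your proposal is correct and follows essentially the same route as the paper, which proves the theorem as "a direct application" of Corollaries \ref{C:pathneck} and \ref{C:nerveconcat}; you have simply filled in the details (restriction of the Kan extension to representables, cocontinuity of $\Cbox$, and monoidality of $N$) in the natural way. The one "obstacle" you flag — identifying the pushout in $s\Cat$ with the explicit concatenation product of Appendix \ref{S:Pshapedcat} — is already settled by the proof of Proposition \ref{P:V-concat}, which verifies that the explicit concatenation satisfies the universal property of that pushout.
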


\begin{cor}\label{Ct_nacklaces_contractible}
Let $T$ be a necklace.  For every   $a\preccurlyeq b\in T_0$, the simplicial set $\Cbox(T)(a,b)$ is contractible, in particular the simplicial set $\Cbox_t(T)$ is contractible.
\end{cor}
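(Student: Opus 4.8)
The plan is to reduce the statement to the contractibility of the nerve of a bounded poset, exactly as in the proof of Lemma \ref{Ct_cubes_contractible}. By Theorem \ref{rigid_neck} we have $\Cbox(T)(a,b) \cong N(\Cbox_{path}(T))(a,b) = N(\Cbox_{path}(T)(a,b))$, so it suffices to show that the poset $\Cbox_{path}(T)(a,b)$ is contractible (as a simplicial set via its nerve). If $a \not\preccurlyeq b$ this poset is empty, but then there is nothing to prove since the statement only concerns $a \preccurlyeq b$; and if $a = b$ the only path is the constant one, so the poset is a point.

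First I would invoke Corollary \ref{C:pathneck}\eqref{cor_neck_1}: for $a \preccurlyeq b$ we have an isomorphism of posets $\Cbox_{path}(T)(a,b) \cong \Cbox_{path}(T_{[a,b]})(\alpha,\omega)$, where $T_{[a,b]}$ is again a necklace, say $T_{[a,b]} = \square^{p_1}\vee\ldots\vee\square^{p_r}$. Then by Corollary \ref{C:pathneck}\eqref{cor_neck_2} (iterated, or equivalently by part (3) together with Proposition \ref{P:V-concat}) this poset is isomorphic to a finite product
\[
\Cbox_{path}(\square^{p_r})(\alpha,\omega)\times\cdots\times\Cbox_{path}(\square^{p_1})(\alpha,\omega),
\]
and by Proposition \ref{P:pathcubes} each factor is isomorphic to the weak Bruhat order $\Sigma_{\{1,\ldots,p_i\}}$, which is a bounded poset. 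A finite product of bounded posets is bounded (the product of the least elements is least, the product of the greatest elements is greatest), so $\Cbox_{path}(T)(a,b)$ is a bounded poset, and the nerve of a bounded poset is contractible (a top or bottom element gives a contracting homotopy). This yields contractibility of $\Cbox(T)(a,b)$, and taking $a = \alpha_T$, $b = \omega_T$ gives in particular that $\Cbox_t(T)$ is contractible.

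I expect essentially no serious obstacle here: every ingredient has already been assembled in Corollary \ref{C:pathneck}, Proposition \ref{P:pathcubes}, and the observation that weak Bruhat orders are bounded posets. The only mild point to get right is the bookkeeping that a finite product of bounded posets is bounded and that its nerve (which is the product of the nerves, up to the usual Eilenberg--Zilber weak equivalence, but here literally the nerve of the product poset) is contractible; this is immediate from the existence of a global minimum or maximum. One could also argue more directly that $\Cbox(T)$ is the nerve of a $P$-shaped poset-category all of whose homposets are bounded, but routing through Corollary \ref{C:pathneck} keeps the argument short and self-contained.
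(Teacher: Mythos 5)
Your proof is correct and follows the same route as the paper: both reduce via Theorem \ref{rigid_neck} and Corollary \ref{C:pathneck} to the fact that $\Cbox_{path}(T)(a,b)$ is a (finite product of) bounded poset(s), whose nerve is contractible. The paper's proof is just a terser statement of exactly this argument.
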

\begin{proof} By Corollary \ref{C:pathneck},   $\Cbox_{path}(T)(a,b)$ is a product of bounded posets, hence its nerve is contractible. We conclude using Theorem \ref{rigid_neck}.
\end{proof}

\subsection{Computing the rigidification functor}

The construction by left Kan extension gives us a way to express $\Cbox(S)$ as a colimit in $s\Cat$, which is difficult to compute. In this section, 
we use necklaces as ``paths of higher dimension'' to obtain a handy way to compute $\Cbox(S)$. Indeed, we follow step by step the techniques 
developed by Dugger and Spivak in \cite[Proposition 4.3]{DS11}.

Each object $f:T \to S_{a,b}$ in $\slicecat{\Nec}{S_{a,b}}$ induces a morphism $\Cbox_t(T) \to \Cbox_t(S_{a,b})$ in $s\Set$ and this construction gives a 
morphism $\colim_{\slicecat{\Nec}{S_{a,b}}}\Cbox_t(T) \to \Cbox_t(S_{a,b})$ in $s\Set$. We prove that it is an isomorphism,  so that  $\Cbox(S)(a,b)=\C_t(S_{a,b})$ 
is computed as a colimit in $s\Set$.

\begin{notation} Let $S_{a,b}\in\cSdp$.
\begin{itemize}
    \item We set $E_t(S_{a,b}):=\colim_{\slicecat{\Nec}{S_{a,b}}}\Cbox_t(T)$. 
        \item Let $(\alpha_t)_{S_{a,b}}: E_t(S_{a,b}) \to \Cbox_t(S_{a,b})$ be the structure map from the colimit to the cocone $\Cbox_t(S_{a,b})$.
          \end{itemize}
 The following facts are left to the reader.
 \begin{itemize}
 \item The definition of $E_t$ is functorial, hence defines a functor $E_t : \cSdp \to s\Set$.
 \item The morphisms $(\alpha_t)_{S_{a,b}}$ in $s\Set$ form a natural transformation $\alpha_t : E_t \Rightarrow \Cbox_t$.
 \end{itemize}
 \end{notation}

Our goal is to prove that $\alpha_t$ is a natural isomorphism. In fact, we will prove that $E_t$ can be upgraded to a functor $E : c\Set \to s\Cat$ that is naturally 
isomorphic to $\Cbox$.

\begin{prop}
There exists a functor $E : c\Set \to s\Cat$ and a natural transformation $\alpha : E\Rightarrow\Cbox$ such that
\begin{itemize}
\item $\Ob(E(S)) = S_0$,
\item $E(S)(a,b) = E_t(S_{a,b})$,
\item $\alpha_S$ is the dentity on objects and $\alpha_S(a,b) = (\alpha_t)_{S_{a,b}} : E(S)(a,b)\to\Cbox(S)(a,b)$.
\end{itemize} 
\end{prop}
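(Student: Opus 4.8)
The statement is essentially a bookkeeping lemma: we already have, for each bipointed cubical set $S_{a,b}$, the simplicial set $E_t(S_{a,b})$, the structure map $(\alpha_t)_{S_{a,b}}$, and the functoriality of $E_t$ and naturality of $\alpha_t$ as transformations of functors into $s\Set$. What remains is to reorganize these pointwise data into a single functor $E : c\Set \to s\Cat$ together with a natural transformation $\alpha : E \Rightarrow \Cbox$. The plan is to first define $E$ on objects, then on morphisms, check functoriality, and finally assemble $\alpha$ from its components; the only subtlety is verifying that composition in $s\Cat$ is well-behaved, i.e.\ that $E(S)$ really is a simplicial category (associative composition, units) and that the candidate maps respect it.

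\textbf{Step 1: $E$ on objects.} Given a cubical set $S$, set $\Ob(E(S)) := S_0$ and $E(S)(a,b) := E_t(S_{a,b})$ for $a,b \in S_0$. We must equip this with a composition. The natural place to get it is to transport the composition of $\Cbox(S)$ backwards, but since $\alpha_t$ is not yet known to be an isomorphism we instead build composition directly on the colimits: given objects $f : T \to S_{a,b}$ and $g : U \to S_{b,c}$ of $\slicecat{\Nec}{S_{a,b}}$ and $\slicecat{\Nec}{S_{b,c}}$, their ``wedge'' $f * g : T \vee U \to S_{a,c}$ is an object of $\slicecat{\Nec}{S_{a,c}}$, and the concatenation isomorphism $\Cbox_t(U) \times \Cbox_t(T) \to \Cbox_t(T\vee U)$ of Theorem \ref{rigid_neck} followed by the structure map into $E_t(S_{a,c})$ gives a cocone; passing to the colimit over $\slicecat{\Nec}{S_{b,c}} \times \slicecat{\Nec}{S_{a,b}}$ and using that colimits commute with the finite product appearing here (the relevant index categories have the needed shape — this is exactly the point Dugger and Spivak handle) yields the composition $E(S)(b,c) \times E(S)(a,b) \to E(S)(a,c)$. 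Units come from the empty necklace $\square^0 \to S_{a,a}$. Associativity and unitality reduce to the corresponding strict associativity/unitality of the wedge $*$ on necklaces and of concatenation of paths, which holds on the nose.

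\textbf{Step 2: $E$ on morphisms, and $\alpha$.} A map $\phi : S \to S'$ in $c\Set$ acts on vertices by $\phi_0 : S_0 \to S'_0$, and for each $a,b$ postcomposition with $\phi$ sends the slice $\slicecat{\Nec}{S_{a,b}}$ into $\slicecat{\Nec}{S'_{\phi a,\phi b}}$, inducing $E_t(S_{a,b}) \to E_t(S'_{\phi a,\phi b})$; this is just the functoriality of $E_t$ already recorded, and it is immediate that it commutes with the composition defined in Step 1 (both are built from $*$ and concatenation, which $\phi$ respects), so $E(\phi)$ is a simplicial functor and $E$ is a functor $c\Set \to s\Cat$. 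For $\alpha$: on objects $\alpha_S$ is the identity $S_0 = \Ob(E(S)) \to \Ob(\Cbox(S)) = S_0$ (using Lemma \ref{L:cocontinuous}), and on homs it is $(\alpha_t)_{S_{a,b}} : E(S)(a,b) \to \Cbox(S)(a,b) = \Cbox_t(S_{a,b})$. One checks $\alpha_S$ is a simplicial functor by comparing the composition of Step 1 with the composition of $\Cbox(S)$: both are induced by the wedge $T, U \mapsto T \vee U$ and Theorem \ref{rigid_neck}, so the square commutes. Naturality of $\alpha$ in $S$ is exactly the naturality of $\alpha_t$ already noted, together with naturality on objects, which is trivial.

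\textbf{Main obstacle.} The genuinely non-formal point is Step 1: showing that the componentwise colimits $E_t(S_{a,b})$ organize into a \emph{category} enriched in $s\Set$, i.e.\ that composition is well-defined and strictly associative. This requires that the colimit defining $E_t(S_{a,c})$ absorbs the product of the colimits defining $E_t(S_{b,c})$ and $E_t(S_{a,b})$ — concretely, that the wedge functor $\vee$ on slices over $S$ is compatible with the colimit formation, which is where the structure of $\slicecat{\Nec}{S_{a,b}}$ (and the ``cofinality''-type facts about necklaces analogous to \cite[Prop.~4.3]{DS11}) is used. Everything else is a routine diagram chase.
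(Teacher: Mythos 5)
Your proposal is correct and follows essentially the same route as the paper: composition is defined by combining the concatenation isomorphism of Theorem \ref{rigid_neck} with the interchange of the product and the two colimits, units come from $*\to S_{a,a}$, and associativity/unitality together with the compatibility of $\alpha$ reduce to the monoidal structure of $(\cSdp,\vee,*)$ and the universal property of colimits. One small correction of emphasis: the interchange $\colim\times\colim\cong\colim$ over the product category holds simply because $\times$ is cocontinuous in each variable in the cartesian closed category $s\Set$, not because of any special shape of the slice categories, and no cofinality property of necklaces is needed at this stage (that only enters in the subsequent proof that $\alpha$ is an isomorphism).
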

\begin{proof}
We take $*\to S_{a,a} \in \slicecat{\Nec}{S_{a,a}}$  as identity morphism ${\id}_a\in E(S)(a,a)$.
The composition in the simplicial category $E(S)$ is defined to be the composite  featured as the left  arrow in the following diagram:
    \[\begin{tikzcd}
    E_t(S_{b,c})\times E_t(S_{a,b}) = \colim\limits_{V\to S_{b,c}}\Cbox_t(V) \times \colim\limits_{U\to S_{a,b}}\Cbox_t(U) 
    \ar[d, dashed, "\circ_{E}"] & \colim\limits_{\substack{V\to S_{b,c} \\ U\to S_{a,b}}}(\Cbox_t(V)\times \Cbox_t(U)) 
    \ar[l, swap, "\cong"] \ar[d, "\text{Th. } \ref{rigid_neck} ", "\cong"'] \\ 
    E_t(S_{a,c}) = \colim\limits_{T\to S_{a,c}}\Cbox_t(T) & \colim\limits_{\substack{V\to S_{b,c} \\ U\to S_{a,b}}}\Cbox_t(U\vee V) \ar[l]
    \end{tikzcd}\]
 where the top arrow is invertible, as $\times$ is cocontinuous in $s\Set$.
Then the monoidal structure of $(\cSdp,\vee,*)$ ensures that $E(S)$ with identities and composition as above is a simplicial category. The functoriality of $E$ comes from that of $E_t$.

 Let us check that the $(\alpha_t)_{S_{a,b}}$ induce an enriched functor $\alpha_S : E(S)\to\Cbox(S)$, which is the identity on objects. 
 We have to prove that the following diagram commutes:
       \[\xymatrix@C=0.6em{
    \colim\limits_{{V\to S_{b,c}}}\Cbox_t(V) \times \colim\limits_{U\to S_{a,b}}\Cbox_t(U) \ar[d]_{(\alpha_t)_{S_{b,c}}\times(\alpha_t)_{S_{a,b}}} 
    &
     \colim\limits_{\substack{V\to S_{b,c} \\ U\to S_{a,b}}} (\Cbox_t(V)\times \Cbox_t(U)) \ar[l]_-{\cong} 
     \ar[r]^-{\cong}\ar[dl] & \colim\limits_{\substack{V\to S_{b,c} \\ U\to S_{a,b}}}\Cbox_t(U\vee V) \ar[r] \ar[dr]
     & \colim\limits_{T\to S_{a,c}}\Cbox_t(T) \ar[d]^{(\alpha_t)_{S_{a,c}}} \\ 
    \Cbox_t(S_{b,c})\times\Cbox_t(S_{a,b}) \ar[rrr] & & & \Cbox_t(S_{a,c})
}\]  
    It suffices to notice that for all $V\to S_{b,c}$ and $U\to S_{a,b}$, $\Cbox(U\vee V)\to \Cbox(S)$ is a simplicially enriched functor and so the following square commutes:
       \[\begin{tikzcd}
    \Cbox(U\vee V)(\alpha_V,\omega_V)\times\Cbox(U\vee V)(\alpha_U,\omega_U) \ar[r] \ar[d] & \Cbox(U\vee V)(\alpha_U,\omega_V) \ar[d] \\ 
    \Cbox(S)(b,c)\times\Cbox(S)(a,b) \ar[r] & \Cbox(S)(a,c)
    \end{tikzcd}\]
   Theorem \ref{rigid_neck} applied to our case gives  $\Cbox(U\vee V)(\alpha_V,\omega_V)= \Cbox(V)(\alpha_V,\omega_V)$ and
  $ \Cbox(U\vee V)(\alpha_U,\omega_U)=\Cbox(U)(\alpha_U,\omega_U)$, so that the diagram writes:
     \[\begin{tikzcd}
    \Cbox_t(V)\times\Cbox_t(U) \ar[r] \ar[d] & \Cbox_t(U\vee V) \ar[d] \\ \Cbox_t(S_{b,c})\times\Cbox_t(S_{a,b}) \ar[r] & \Cbox_t{(S_{a,c})}
    \end{tikzcd}\]
  and we conclude by universality of colimits. The naturality of $\alpha_S : E(S)\to\Cbox(S)$ comes from the naturality of $\alpha_t$.
\end{proof}

\begin{prop}\label{alpha_nat_iso}
The natural transformation $\alpha : E \Rightarrow \Cbox$ is a natural isomorphism.
\end{prop}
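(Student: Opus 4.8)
The proof mirrors Dugger--Spivak's argument (\cite[Proposition 4.3]{DS11}) adapted to cubes. Since $\alpha$ is the identity on objects by construction, it suffices to show that for every cubical set $S$ and every pair of vertices $a,b\in S_0$ the component $(\alpha_t)_{S_{a,b}} : E_t(S_{a,b}) \to \Cbox_t(S_{a,b})$ is an isomorphism of simplicial sets. The strategy is to build an inverse, or equivalently to show that $\Cbox_t(S_{a,b})$ itself satisfies the universal property of the colimit $\colim_{\slicecat{\Nec}{S_{a,b}}}\Cbox_t(T)$. The key structural input is that $\Cbox$ is defined by left Kan extension along $Y:\square\to c\Set$, so $\Cbox(S)$ (and hence each $\Cbox_t(S_{a,b})$) is a colimit over the category of elements $\slicecat{\square}{S}$ of the diagram $n\mapsto \Cbox(\square^n)$. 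One then wants to reorganize this colimit over the category of elements into a colimit over $\slicecat{\Nec}{S_{a,b}}$.

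\medskip
\textbf{Key steps.} First, I would reduce to representables: both $E$ and $\Cbox$ are cocontinuous (for $\Cbox$ this is its definition as a left Kan extension; for $E$ one checks that $E_t$ is built from colimits and hence commutes with colimits in the appropriate sense, using that the slice category construction $S_{a,b}\mapsto\slicecat{\Nec}{S_{a,b}}$ interacts well with colimits of cubical sets), so it is enough to check $\alpha$ is an isomorphism on the cubes $\square^n$. On a cube $\square^n$, we have $E_t(\square^n) = \colim_{\slicecat{\Nec}{\square^n_{\alpha,\omega}}}\Cbox_t(T)$, and the object $\iota^T_{\alpha,\omega}$ of Proposition \ref{univ_prop_subneck} combined with Corollary \ref{Ct_nacklaces_contractible} shows each $\Cbox_t(T)$ is contractible; meanwhile $\Cbox_t(\square^n)$ is contractible by Lemma \ref{Ct_cubes_contractible}. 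So the content is not contractibility but that the colimit is computed correctly: one must show the comparison map is a bijection in each simplicial degree. For this I would exhibit, for each object $f:T\to S_{a,b}$ of $\slicecat{\Nec}{S_{a,b}}$, the factorisation $f = \iota(f)\pi(f)$ of Lemma \ref{L:TNDmono}(3) through a necklace $T^f$ mapping monomorphically to $S_{a,b}$ — reducing the colimit to one indexed by necklaces that embed in $S$. On such embedded necklaces $\Cbox_t$ is computed by the explicit description via paths (Proposition \ref{P:pathcubes}, Corollary \ref{C:pathneck}, Theorem \ref{rigid_neck}) as a nerve of a weak-Bruhat-type poset. A direct comparison of simplices — a simplex of $\Cbox_t(S_{a,b})$ is a chain of paths in $S$ between $a$ and $b$, and such a chain factors through a minimal embedded subnecklace — should then match simplices on both sides, with the equivalence relation defining the colimit precisely matching the identifications among necklaces.

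\medskip
\textbf{The main obstacle.} The delicate point is the bijectivity in positive simplicial degrees: a $k$-simplex of $\Cbox_t(S_{a,b})$ is (roughly) a $\leadsto$-chain of paths $\gamma_0 \leadsto \gamma_1 \leadsto \cdots \leadsto \gamma_k$ in $S$, and one must show such a chain arises from a unique necklace $T\to S_{a,b}$ (up to the colimit identifications), together with a $k$-simplex of $\Cbox_t(T)$. Injectivity requires that two necklaces giving rise to the same chain are connected by a zig-zag in $\slicecat{\Nec}{S_{a,b}}$ — this is where one needs the uniqueness in the epi-mono factorisation of Lemma \ref{L:TNDmono}(3) and the morphism-classification of Proposition \ref{P:Nec_morphisms}, to see that the "minimal" carrying necklace is well-defined. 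Surjectivity requires that any such chain of paths does factor through a common necklace, i.e. the union of the nondegenerate cubes of $S$ hit by the $2$-cube witnesses of the relations $\gamma_i\leadsto\gamma_{i+1}$ together with the beads of the $\gamma_i$ assembles into a necklace mapping to $S$ — this uses that in a cubical subset of a cube the relevant cubes are linearly organised (the arguments of Lemma \ref{L:TNDmono}). I expect the bookkeeping for the equivalence relation $\sim$ (absorbing degeneracies of beads) to be the most error-prone part, and would handle it by first passing, via Lemma \ref{unique-representative}, to monomorphic representatives on cubes, reducing everything to embedded subnecklaces where the combinatorics is rigid.
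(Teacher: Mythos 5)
There is a genuine gap at the very first step. You propose to reduce to representables by asserting that both $\Cbox$ and $E$ are cocontinuous. Cocontinuity of $\Cbox$ is indeed free (it is a left Kan extension along Yoneda), but cocontinuity of $E$ is essentially the content of the proposition, not an input to it: $E_t$ is a left Kan extension of $\Cbox_t|_{\Nec}$ along the (non-Yoneda) inclusion $\Nec\hookrightarrow\cSdp$, and such extensions are not cocontinuous in general. Your parenthetical that ``$S_{a,b}\mapsto\slicecat{\Nec}{S_{a,b}}$ interacts well with colimits'' fails for the colimits that actually matter: a necklace mapping into a pushout $S_1\cup_{S_0}S_2$ need not factor through either piece, since its beads may alternate between them --- this is exactly the phenomenon behind Example \ref{(C-path-not-continuous} and Remark \ref{the-why-of-necklaces}. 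The paper avoids this by arguing in the opposite direction: it first proves $\alpha_T$ is an isomorphism for $T$ a necklace (where $\slicecat{\Nec}{T_{a,b}}$ has the terminal object $\iota^T_{a,b}$ of Proposition \ref{univ_prop_subneck}, so the colimit defining $E_t$ collapses immediately --- a point you miss, using Proposition \ref{univ_prop_subneck} only to get contractibility), and then for general $S$ constructs an explicit inverse $\beta_S$ by writing $\Cbox_t(S_{a,b})\cong(\colim_{\square^k\to S}E(\square^k))(a,b)\to E_t(S_{a,b})$, using the cube case and the cocontinuity of $\Cbox$ alone; the identities $\alpha\beta=\id$ and $\beta\alpha=\id$ are then checked on characteristic maps of the colimits by naturality. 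Cocontinuity of $E$ is a consequence of this, not a premise.

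A second problem is a circularity in your ``main obstacle'' paragraph: you want to compare $k$-simplices of $\Cbox_t(S_{a,b})$ described as $\leadsto$-chains of paths in $S$, but for a general cubical set $S$ no such concrete description of $\Cbox_t(S_{a,b})$ is available in advance --- it is precisely Theorem \ref{T:Ct_with_neck}, i.e.\ the corollary of this proposition, that supplies it. The direct combinatorial matching of simplices you outline is therefore not a proof strategy here; the argument has to be carried out at the level of universal properties, as the paper does.
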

\begin{proof}
Let $S\in c\Set$.  We know that $\alpha_S$ is the identity on objects.  
Assume first that $S=T$ is a necklace. Let $a,b\in T_0$. If $a\preccurlyeq b$, by Proposition \ref{univ_prop_subneck}, $\iota^T_{a,b} : T_{[a,b]} \hookrightarrow T$ is terminal in 
$\slicecat{\Nec}{T_{a,b}}$, 
hence  $E(T)(a,b) \cong \Cbox_t(T_{[a,b]})=\Cbox(T)(a,b)$, and the isomorphism is precisely induced by $\alpha_T$. If $a\not\preccurlyeq b$, 
then both categories are empty, hence the result holds for necklaces. 

We prove that for all vertices $a,b$ in  $S$, the morphism $\alpha_S(a,b) : E(S)(a,b)\to\Cbox(S)(a,b)$ is an isomophism of simplicial sets, by providing 
an inverse $\beta_S(a,b)$. Recall that $\alpha_S(a,b) = \alpha_t(S_{a,b}):E_t(S_{a,b}) \to \Cbox_t(S_{a,b})$ is  the (unique) map from the colimit to 
the cocone $\Cbox_t(S_{a,b})$. Define  $(\beta_t)_{S_{a,b}} : \Cbox_t(S_{a,b})\to E_t(S_{a,b})$ as the composite below:
\[\begin{tikzcd}
    E_t(S_{a,b})  & (\colim\limits_{\square^k\to S}E(\square^k))(a,b) \ar[l] \ar[d, "\cong"',"\square^k \text{ is a necklace }"] \\
    \Cbox_t(S_{a,b}) \ar[u, dashed, "(\beta_t)_{S_{a,b}}"]&  (\colim\limits_{\square^k\to S}\Cbox(\square^k))(a,b) \ar[l, swap, "\cong"]
    \end{tikzcd}\]
 By naturality of all the maps involved in the diagram,  the family $(\beta_t)_{S_{a,b}}$ assembles to a natural transformation $\beta_t : \Cbox_t \Rightarrow E_t$, 
 giving rise to $\beta:\Cbox \Rightarrow E$.

 We show, in this order, that $\beta$ is a right inverse, and a left inverse of $\alpha$. To show the former, it is enough to show $\alpha_S\circ \beta_S\circ j_f= j_f$, 
 for all  $f:\Box^k\rightarrow S_{a,b}$, where $j_f$ is the characteristic map $E(\Box^k)\rightarrow \colim\limits_{\square^k\to S} E(\Box^k)$, and 
 where we identify $ \Cbox(S)$ with $\colim\limits_{\square^k\to S} E(\Box^k)$.  Indeed, we have
 $$\begin{array}{lllll}
 \alpha_S\circ \beta_S\circ j_f & = & \alpha_S \circ E(f) & \text{by definition of }\beta\\
 & = & \Cbox(f)\circ \alpha_{\Box^k} & \text{by naturality of }\alpha\\
 & = & j_f & \text{by the identification above}.
 \end{array}$$
 
We prove now  that $(\beta_t)_{S_{a,b}} \circ (\alpha_t)_{S_{a,b}}$ is the identity in a similar way. By the universal property of the colimit, 
it is enough to prove $(\beta_t)_{S_{a,b}}\circ(\alpha_t)_{S_{a,b}}\circ i_f = i_f$ for all $T \stackrel{f}\to S_{a,b}$., where $i_f : \Cbox_t(T)\to E_t(S_{a,b})$ is the 
characteristic morphism.
This comes from the commutative diagram:
    \[\begin{tikzcd}
    &  E_t(S_{a,b}) \ar[d, "(\alpha_t)_{S_{a,b}}"] \\
    \Cbox_t(T) \ar[r, "\Cbox_t(f)"] \ar[dd, swap, bend right=50, "\id"] \ar[ur, dashed, "i_f"] \ar[d, "\cong"', "(\beta_t)_T"] & \Cbox_t(S_{a,b}) \ar[d, "(\beta_t)_{S_{a,b}}"] \\
    E_t(T) \ar[d, "\cong"', "(\alpha_t)_T"] \ar[r, "E_t(f)"] & E_t(S_{a,b}) \\
    \Cbox_t(T) \ar[ur, swap, dashed, "i_f"]
    \end{tikzcd}\]
    (the above triangle commutes by definition of $\alpha_t$, the middle square commutes by naturality of $\beta_t$, and the bottom triangle commutes by definition of $E_t(f)$).
\end{proof}
As a direct corollary, we get the main theorem of the section.
\begin{thm}\label{T:Ct_with_neck}
Let $S$ be a cubical set and $a,b\in S_0$. We have the following isomorphism of simplicial sets
 \[\Cbox_t(S_{a,b})=\Cbox(S)(a,b) \cong \colim\limits_{(T\to S_{a,b})\;\in\; \slicecat{\Nec}{S_{a,b}}}\Cbox_t(T).\]
\end{thm}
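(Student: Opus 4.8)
The plan is to deduce this theorem as an immediate consequence of the two propositions that precede it, namely Proposition \ref{alpha_nat_iso} (that $\alpha : E \Rightarrow \Cbox$ is a natural isomorphism) together with the defining formula $E(S)(a,b) = E_t(S_{a,b}) = \colim_{\slicecat{\Nec}{S_{a,b}}}\Cbox_t(T)$. So the proof is essentially a two-line chase: by definition of $\Cbox_t$ we have $\Cbox_t(S_{a,b}) = \Cbox(S)(a,b)$; by Proposition \ref{alpha_nat_iso}, the map $\alpha_S : E(S)\to\Cbox(S)$ is an isomorphism of simplicial categories which is the identity on objects, hence for each pair $a,b\in S_0$ it restricts to an isomorphism of simplicial sets $\alpha_S(a,b) = (\alpha_t)_{S_{a,b}} : E(S)(a,b) \xrightarrow{\ \cong\ } \Cbox(S)(a,b)$; and by the construction of $E$ in the preceding proposition, $E(S)(a,b)$ is by definition the colimit $\colim_{(T\to S_{a,b})\in \slicecat{\Nec}{S_{a,b}}}\Cbox_t(T)$. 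Composing these identifications yields exactly the claimed isomorphism.

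First I would state explicitly that the isomorphism exhibited is $(\alpha_t)_{S_{a,b}}$ itself (equivalently its inverse $(\beta_t)_{S_{a,b}}$ built in the proof of Proposition \ref{alpha_nat_iso}), so that the reader knows the map is the canonical structure map $E_t(S_{a,b}) \to \Cbox_t(S_{a,b})$ from the colimit to the cocone $\Cbox_t(S_{a,b})$ determined by the family of maps $\Cbox_t(T) \to \Cbox_t(S_{a,b})$ induced by the objects $f : T\to S_{a,b}$ of $\slicecat{\Nec}{S_{a,b}}$. This is worth recording because it makes the isomorphism natural in $S_{a,b}$ and compatible with composition, which is what is used downstream. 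Then I would simply invoke Proposition \ref{alpha_nat_iso} to conclude it is an isomorphism.

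Honestly, there is no real obstacle here: all the work has already been done in Proposition \ref{alpha_nat_iso} — that is where one must check that $\alpha$ admits a two-sided inverse $\beta$, which required the necklace case (via the terminal object $\iota^T_{a,b} : T_{[a,b]}\hookrightarrow T$ in $\slicecat{\Nec}{T_{a,b}}$ from Proposition \ref{univ_prop_subneck}), the observation that every representable $\square^k$ is itself a necklace, and the two diagram chases showing $\alpha_S\circ\beta_S = \id$ and $(\beta_t)_{S_{a,b}}\circ(\alpha_t)_{S_{a,b}} = \id$. The theorem is the packaged restatement of that fact at the level of individual homsets, so the proof is genuinely a corollary, as the text already signals with the phrase ``As a direct corollary''. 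The one thing I would be careful to do is not re-prove Proposition \ref{alpha_nat_iso}, but merely cite it and unwind the definitions of $\Cbox_t$ and $E_t$.

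\begin{proof}
By definition, $\Cbox_t(S_{a,b}) = \Cbox(S)(a,b)$ and $E_t(S_{a,b}) = \colim_{(T\to S_{a,b})\in\slicecat{\Nec}{S_{a,b}}}\Cbox_t(T)$. By Proposition \ref{alpha_nat_iso}, the natural transformation $\alpha : E\Rightarrow\Cbox$ is a natural isomorphism; since $\alpha_S : E(S)\to\Cbox(S)$ is an isomorphism of simplicial categories which is the identity on objects, its component at the pair $(a,b)$ is an isomorphism of simplicial sets
\[(\alpha_t)_{S_{a,b}} : E_t(S_{a,b}) \xrightarrow{\ \cong\ } \Cbox_t(S_{a,b}),\]
namely the canonical map from the colimit to the cocone $\Cbox_t(S_{a,b})$. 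Combining these identifications gives the desired isomorphism.
\end{proof}
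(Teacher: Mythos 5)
Your proposal is correct and matches the paper exactly: the paper gives no separate argument for this theorem, introducing it with ``As a direct corollary'' of Proposition \ref{alpha_nat_iso}, and your two-line unwinding of the definitions of $\Cbox_t$ and $E_t$ together with the citation of that proposition is precisely the intended reasoning. Your identification of the isomorphism as the canonical cocone map $(\alpha_t)_{S_{a,b}}$ is a useful explicit touch, consistent with how the map is used later.
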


In the following remark, we point out that necklaces were instrumental in getting the characterisation given in the previous theorem. 
\begin{rmk} \label{the-why-of-necklaces}
We {\em do not} have an isomorphism between  $\Cbox(S)(a,b)$ and the colimit above restricted to cubes. This already fails when $S$ is a necklace.  
Consider $S=(1,1)$ and $(a,b)=(\alpha,\omega)$. There is no morphism from a cube to $S_{\alpha,\omega}$ in $\cSdp$, whereas there is one from the 
necklace $S$ to itself (the identity morphism). Hence the restricted colimit is empty, while $\Cbox(S)(a,b)$ is  not.
\end{rmk}

\begin{rmk} It can be shown, using the result above, that $\pi_0(\Cbox(S))\cong\pi_0(\Cbox_{path}(S))$ for any cubical set $S$. \end{rmk}

\subsection{Case of cubical subsets of a cube}\label{S:TND}\hfill

\begin{defn}\label{D:subneck}
Let  $S_{a,b}$ in $\cSdp$, with $S$ a cubical subset of an $n$-cube.
The subcategory of $\slicecat{\Nec}{S_{a,b}}$ whose objects are monomorphisms $T \to S_{a,b}$ and arrows are monomorphisms between necklaces is denoted 
$\SubNeck(S_{a,b})$. The category $\SubNeck(S_{a,b})$ is actually a poset, as shown in Proposition \ref{P:subneckposet}.
\end{defn}

\begin{prop} \label{Ct_with_subneck}
Let $S_{a,b} \in \cSdp,$ with $S$ a cubical subset of an $n$-cube. The rigidification functor has the following expression:
\[\Cbox(S)(a,b)=\Cbox_t(S_{a,b})\cong \colim_{\SubNeck(S_{a,b})}\Cbox_t(T).\]
\end{prop}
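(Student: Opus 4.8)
The plan is to upgrade Theorem~\ref{T:Ct_with_neck} from the full slice category $\slicecat{\Nec}{S_{a,b}}$ to the subposet $\SubNeck(S_{a,b})$ by showing that the inclusion functor $\iota : \SubNeck(S_{a,b}) \hookrightarrow \slicecat{\Nec}{S_{a,b}}$ is homotopy cofinal in the sense that it induces an isomorphism of colimits of the diagram $T \mapsto \Cbox_t(T)$. Concretely, I would exhibit a retraction: to each object $f : T \to S_{a,b}$ of $\slicecat{\Nec}{S_{a,b}}$, Lemma~\ref{L:TNDmono}(3) associates a factorisation $f = \iota(f)\circ\pi(f)$ with $\pi(f) : T \twoheadrightarrow T^f$ an epimorphism in $\Nec$ and $\iota(f) : T^f \hookrightarrow S_{a,b}$ a monomorphism, so $T^f$ is an object of $\SubNeck(S_{a,b})$. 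This assignment $f \mapsto (\iota(f) : T^f \to S_{a,b})$ should be made into a functor $r : \slicecat{\Nec}{S_{a,b}} \to \SubNeck(S_{a,b})$, using that the epi–mono factorisation in $\Nec$ is functorial (each bead factors uniquely, by Proposition~\ref{P:NDfactorisation} and Lemma~\ref{L:TNDmono}(2)), and that $r \circ \iota = \id$.

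The key point is then that $\pi(f)$ is sent by $\Cbox_t$ to an isomorphism of simplicial sets. Indeed $\Cbox_t(\pi(f)) : \Cbox_t(T) \to \Cbox_t(T^f)$ is, after Theorem~\ref{rigid_neck} and Corollary~\ref{C:pathneck}, induced on each wedge factor by a surjection $\square^{n_i} \twoheadrightarrow \square^{m_i}$ of cubes, i.e.\ a composition of degeneracies and negative connections; and by Proposition~\ref{P:pathcubes} the induced map on $\Cbox_t(\square^{n_i}) = N(\Sigma_{\{1,\dots,n_i\}})$ must be the nerve of an isomorphism of weak-Bruhat posets, because each such poset is a single interval $[\alpha,\omega]$ and $d(\alpha,\omega)$ is preserved. (One should double-check that the only epimorphisms in $\Nec$ landing in a cube of the same $d(\alpha,\omega)$ are identities — this is essentially Lemma~\ref{L:distance}.) Hence $\Cbox_t(\pi(f))$ is an isomorphism, so for every $f$ the canonical map $\Cbox_t(T) \to \Cbox_t(T^f)$ is invertible and natural in $f$, which says precisely that the natural transformation $\Cbox_t \Rightarrow \Cbox_t \circ r$ (built from the $\pi(f)$'s) is a natural isomorphism of functors on $\slicecat{\Nec}{S_{a,b}}$.

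From there I would conclude by a standard colimit comparison: the identity functor on $\slicecat{\Nec}{S_{a,b}}$ and $\iota \circ r$ are connected by the natural transformation $\{\pi(f)\}$, and $\Cbox_t$ carries this to a natural isomorphism of diagrams; a functor with a retraction such that $\id \Rightarrow \iota r$ becomes invertible after applying the diagram induces an isomorphism on colimits (equivalently, $r$ is cofinal for this particular diagram). Therefore
\[
\colim_{\slicecat{\Nec}{S_{a,b}}} \Cbox_t(T)\ \cong\ \colim_{\SubNeck(S_{a,b})} \Cbox_t(T),
\]
and combining this with Theorem~\ref{T:Ct_with_neck} gives the claimed identification $\Cbox(S)(a,b) = \Cbox_t(S_{a,b}) \cong \colim_{\SubNeck(S_{a,b})} \Cbox_t(T)$.

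The main obstacle I anticipate is the functoriality and well-definedness of the retraction $r$ on morphisms: given a map $g : (f : T \to S_{a,b}) \to (f' : T' \to S_{a,b})$ in $\slicecat{\Nec}{S_{a,b}}$, one must produce a (necessarily unique, since $\SubNeck$ is a poset by Proposition~\ref{P:subneckposet}) monomorphism $T^f \hookrightarrow T^{f'}$ compatible with the factorisations, and check the naturality square relating $\pi(f)$, $\pi(f')$ and $g$. This needs the uniqueness half of the epi–mono factorisation in $\Nec$ together with Proposition~\ref{P:Nec_morphisms}(2) to track how $g$ interacts with beads; once that bookkeeping is in place, the rest — that $\Cbox_t$ kills the epimorphisms and that cofinality for a single diagram suffices — is routine given the results already proved. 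A minor additional care is the degenerate case $a \not\preccurlyeq b$ (both sides empty) and the case $a = b$ (both sides the point), which are handled as in the proof of Proposition~\ref{alpha_nat_iso}.
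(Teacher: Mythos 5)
There is a genuine error at the step you yourself call the key point. The map $\Cbox_t(\pi(f)) : \Cbox_t(T)\to\Cbox_t(T^f)$ is \emph{not} an isomorphism in general. Your justification rests on the premise that ``$d(\alpha,\omega)$ is preserved'', but Lemma \ref{L:distance} says the opposite: degeneracies and connections strictly decrease $d(\alpha,\omega)$, so $\pi(f)$ preserves it only when $\pi(f)$ is the identity, i.e.\ when $f$ was already a monomorphism. Concretely, take $S=\square^1$ with $(a,b)=(\alpha,\omega)$ and $f=\gamma_{1,0}:\square^2\to\square^1$. Then $\pi(f)=\gamma_{1,0}$, $T^f=\square^1$, and $\Cbox_t(\pi(f))$ is the map $N(\Sigma_{\{1,2\}})\cong\Delta^1\to\Delta^0\cong\Cbox_t(\square^1)$, which is not invertible. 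So $\Cbox_t$ does \emph{not} carry the family $\{\pi(f)\}$ to a natural isomorphism of diagrams, and the colimit-comparison principle you invoke does not apply as stated.

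Fortunately, invertibility of $\Cbox_t(\pi(f))$ is neither true nor needed. What one should prove is that the inclusion $U:\SubNeck(S_{a,b})\hookrightarrow\slicecat{\Nec}{S_{a,b}}$ is a \emph{final} functor; then the two colimits agree for any diagram whatsoever, with no condition on $\Cbox_t$, and Theorem \ref{T:Ct_with_neck} finishes the proof. This is the paper's route: for each $f$ the comma category $f\downarrow U$ is nonempty, since $\pi(f):f\to\iota(f)$ is an object of it, and it is connected, since any object $g:f\to h$ of $f\downarrow U$ factors --- using the epi--mono factorisation of $g$ and the uniqueness of that of $f$ from Lemma \ref{L:TNDmono}(3) --- as a morphism $\pi(f)\to g$ in $f\downarrow U$. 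Note that connectivity of these comma categories is strictly weaker than the full functoriality of $f\mapsto\iota(f)$ that you defer as your ``main obstacle'' (functoriality plus naturality of $\pi$ would exhibit $r$ as a left adjoint to $U$, making $\pi(f)$ initial in $f\downarrow U$, which again yields finality); so the one substantive verification you postpone is essentially the whole proof, and can be done in the weaker, cheaper form above. As written, your argument both rests on a false claim and leaves the actual work open.
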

\begin{proof}
We use Lemma \ref{L:TNDmono} and its notation.
Recall from Theorem \ref{T:Ct_with_neck} that we have $\Cbox_t(S_{a,b})\cong \colim_{\slicecat{\Nec}{S_{a,b}}}   \Cbox_t(T)$. Consider the 
inclusion functor $U:\SubNeck(S_{a,b}) \hookrightarrow \slicecat{\Nec}{S_{a,b}}$ and fix $f\in \slicecat{\Nec}{S_{a,b}}$. The category $f\downarrow U$ has  
$\pi(f):f\to \iota(f)$ as object, and hence is not empty. Let $g:f\rightarrow h$ be an object in $f\downarrow U$, so that $hg=f=\iota(f)\pi(f)$. The morphism $g$ in $\Nec$ 
admits the factorisation  $g=\iota(g)\pi(g)$. By the unique decomposition of $f$ as an epimorphism followed by a monomorphism, there exists an 
isomorphism $\alpha:T^f\to T'$, as illustrated by the following diagram:
  \[\begin{tikzcd}
    &T^f \ar[dr,hook,  "\iota(f)"] \ar[d, dashed, "\cong", "\alpha"'] & \\
    T\ar[ur, two heads,"\pi(f)"] \ar[dr, swap, "g"]\ar[r,two heads, "\pi(g)"] &T'\ar[d,hook, "\iota(g)"] & S_{a,b} \\ & V\ar[ur, hook,  "h"'] &
    \end{tikzcd}\]
In conclusion, the morphism $\iota(g)\alpha$ is a monomorphism, hence a morphism in $f\downarrow U$ from $\pi(f)$ to $g$.  
Thus the category $f\downarrow U$ is  connected.  
We have proved that $U$ is a final functor, from which the statement follows 
(cf. \cite[Section IX.3]{MacLane98}).
\end{proof}

\section{Quillen equivalence} \label{main-section}
In this section, we prove that the adjunction $\Cbox\dashv\Nbox$ is a Quillen equivalence between the Joyal model structure on $c\Set$ and the Bergner model structure on $s\Cat$.

\subsection{Properties of the functor $\Cbox$}

\begin{prop} \label{C_preserves_cof}
The functor $\Cbox$ preserves cofibrations.
\end{prop}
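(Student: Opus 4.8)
The plan is to reduce, via standard model-category generalities, the claim that $\Cbox$ preserves cofibrations to a statement about the generating cofibrations of the Joyal model structure on $c\Set$. Since cofibrations in $c\Set$ are exactly the monomorphisms (Theorem \ref{Cset-Joyal}), and since these are generated (as a saturated class) by the boundary inclusions $\partial^n : \partial\square^n \hookrightarrow \square^n$, and since $\Cbox$ is a left adjoint hence cocontinuous, it suffices to show that each $\Cbox(\partial^n) : \Cbox(\partial\square^n) \to \Cbox(\square^n)$ is a cofibration in the Bergner model structure on $s\Cat$. Cofibrations in $s\Cat$ are retracts of transfinite composites of pushouts of the generating cofibrations, but there is a more convenient characterisation available: a map $F : \EuScript C \to \EuScript D$ of simplicial categories that is injective on objects and such that each $F_{x,y} : \EuScript C(x,y) \to \EuScript D(Fx,Fy)$ is a cofibration (i.e. monomorphism) of simplicial sets is a Bergner cofibration — in fact the Bergner cofibrations between simplicial categories with a fixed object set are exactly the levelwise monomorphisms, and adding freely-generated objects only helps. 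So the crux is: $\Cbox(\partial^n)$ is a bijection on objects (immediate from Lemma \ref{L:cocontinuous}, since $(\partial\square^n)_0 = (\square^n)_0$ for $n\geq 1$, and the case $n=0$ where $\partial\square^0=\emptyset$ is handled separately), and on each mapping space it is a monomorphism of simplicial sets.

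Thus the heart of the proof is to show that for all vertices $a,b$, the map $\Cbox_t((\partial\square^n)_{a,b}) \to \Cbox_t((\square^n)_{a,b})$ is a monomorphism of simplicial sets. Here I would invoke Theorem \ref{T:Ct_with_neck}, or better Proposition \ref{Ct_with_subneck}, which expresses both sides as colimits over $\SubNeck$ of the contractible pieces $\Cbox_t(T)$; more directly, I would use Lemma \ref{unique-representative} together with Proposition \ref{P:pathcubes}: every element of $\Cbox_t((\square^n)_{a,b})$ in each simplicial degree has a \emph{unique} representative given by a monomorphism $\mathrm I_\ell \to \square^n$ corresponding to a strictly increasing chain of vertices, and the last sentence of Lemma \ref{unique-representative} says the same holds verbatim with $\square^n$ replaced by any cubical \emph{subset} $S$ of $\square^n$ — in particular $S = \partial\square^n$. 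Consequently $\Cbox(\partial\square^n)(a,b)$ is, in each degree, precisely the set of those chains all of whose edges and comparisons lie inside $\partial\square^n$, and this is literally a subset of $\Cbox(\square^n)(a,b)$, compatibly with faces and degeneracies. This yields injectivity. One must also check that the relation $\leadsto$ on $\partial\square^n$ is the restriction of that on $\square^n$; this follows from the analogous remark in Proposition \ref{P:pathcubes} (a witnessing map $\mathds I_{k,m} \to (\partial\square^n)_{a,b}$ is in particular a witnessing map into $(\square^n)_{a,b}$), so the nerve of the path poset of $\partial\square^n$ includes into that of $\square^n$.

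I would organise the write-up as follows. First, dispose of object-sets: $\Cbox$ is injective (indeed bijective) on objects for the boundary inclusions by Lemma \ref{L:cocontinuous}, with $n=0$ separate. Second, recall/state the fact that a map of simplicial categories which is injective on objects and a levelwise monomorphism on hom-spaces is a Bergner cofibration (citing \cite{B18}); alternatively, note that $\Cbox(\partial^n)$ is a pushout-product-type map and argue it lies in the saturation of the generators $\Sigma(\partial\Delta^k)\to\Sigma(\Delta^k)$ and $\emptyset \to *$. Third, prove the levelwise-monomorphism claim on hom-spaces using Lemma \ref{unique-representative} and Proposition \ref{P:pathcubes} as above. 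Fourth, conclude that $\Cbox$ sends the generating cofibrations to cofibrations, hence — being cocontinuous and landing in a model category whose cofibrations form a saturated class — sends all cofibrations to cofibrations.

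The main obstacle, I expect, is pinning down precisely which characterisation of Bergner cofibrations to use and verifying its hypotheses cleanly: the ``levelwise monomorphism plus injective on objects implies cofibration'' statement is folklore but needs a careful citation or a short argument (e.g. via the small object argument applied to the generators $\{\Sigma(\partial\Delta^k \hookrightarrow \Delta^k)\}_k \cup \{\emptyset \hookrightarrow *\}$, noting that attaching cells along $\Sigma(\partial\Delta^k)\to\Sigma(\Delta^k)$ produces exactly the levelwise-mono maps with fixed objects, and $\emptyset\to *$ adjoins objects freely). A subtler point worth a sentence is that one should not try to prove $\Cbox(\partial^n)$ is a \emph{pushout} of generators — it generally is not — only that it lies in the saturated class; the uniqueness-of-representatives input from Lemma \ref{unique-representative} is what makes the mapping-space maps honest monomorphisms rather than merely cofibrations up to retract, which is what makes the folklore characterisation applicable.
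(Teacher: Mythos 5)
There is a genuine gap at the pivot of your argument: the claim that a functor of simplicial categories which is injective on objects and a levelwise monomorphism on hom-spaces is a Bergner cofibration is false. Cofibrant objects in Bergner's model structure are retracts of \emph{free} simplicial categories (Dwyer--Kan), so for instance $\emptyset\to\EuScript{B}M$, with $M$ a non-free discrete monoid viewed as a one-object simplicial category, is injective on objects and a levelwise monomorphism but is not a cofibration. Relatedly, attaching cells along $\Sigma(\partial\Delta^k)\to\Sigma(\Delta^k)$ with a fixed object set produces only free extensions (and their retracts and transfinite composites), which form a strictly smaller class than the levelwise monomorphisms; the relations among composites in the target category obstruct the converse. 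So your step two, as stated, cannot be repaired by a citation, and your whole reduction collapses. The paper circumvents this by proving something sharper: for the boundary inclusion, $\Cbox(\partial^n)(a,b)$ is an \emph{isomorphism} for every $(a,b)\neq(\alpha,\omega)$ (because $\square^{d(a,b)}\subseteq\partial\square^n$ forces $\slicecat{\Nec}{(\partial\square^n)_{a,b}}\cong\slicecat{\Nec}{(\square^n)_{a,b}}$, then Theorem \ref{T:Ct_with_neck}), and only the top hom-space $(\alpha,\omega)$ changes, by a monomorphism. Proposition \ref{P:sigmacolim} then exhibits $\Cbox(\partial^n)$ as a genuine pushout of $\Sigma(\Cbox_t(\partial^n))$, and since $\Sigma$ is left Quillen (Proposition \ref{adj_Sigma_Hom}) this is a pushout of a cofibration, hence a cofibration. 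Your observation that all hom-spaces inject is true but not enough; the fact that all but one are isomorphisms is what makes the argument go through.

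A secondary issue: your ``more direct'' proof that the top hom-space map is a monomorphism leans on Lemma \ref{unique-representative} and Proposition \ref{P:pathcubes}, which describe $\Cbox_{path}$, i.e.\ the $0$-simplices of the hom-spaces. But $\partial\square^n$ is not a necklace, so Theorem \ref{rigid_neck} does not identify $\Cbox(\partial\square^n)(a,b)$ with the nerve of the path poset (Example \ref{(C-path-not-continuous} shows this identification genuinely fails for colimits); the higher simplices live in a colimit over $\SubNeck(\partial\square^n)$ and their identifications must be controlled there. This is exactly what Lemma \ref{lem_subneck} does (using that $\SubNeck(\partial\square^n)$ is downward closed in $\SubNeck(\square^n)$ and admits least upper bounds), which is the route you mention parenthetically via Proposition \ref{Ct_with_subneck}; you should make that the actual argument rather than the fallback.
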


\begin{proof} Since  cofibrations of $c\Set$ are generated by $\partial^n : \partial\square^n \to \square^n$, it is enough to prove that
$\Cbox(\partial^n) : \Cbox(\partial\square^n) \to \Cbox(\square^n)$ is a cofibration.
We claim  that the  diagram
\[\begin{tikzcd}
\Sigma(\Cbox_t(\partial\square^n)) \ar[r] \ar[d, "\Sigma(\Cbox_t(\partial^n))", swap] & \Cbox(\partial\square^n) \ar[d, "\Cbox(\partial^n)"] \\
\Sigma(\Cbox_t(\square^n)) \ar[r] & \Cbox(\square^n)
\end{tikzcd}\]
where the horizontal morphisms are given  by the counit of the adjunction $\Sigma\dashv \Hom$ of Proposition \ref{adj_Sigma_Hom},  is a pushout diagram.
The set of objects of the simplicial categories on the right hand side of the diagram is in bijection with $(\square^n)_0=(\partial\square^n)_0$, which is a bounded poset. Let $a$ and $b$ be two such objects.
If $(a,b)\neq(\alpha,\omega)$, then $\square^{d(a,b)} \subseteq \partial\square^n$, so that the functor
$\slicecat{\Nec}{(\partial\square^n)_{a,b}} \to \slicecat{\Nec}{(\square^n)_{a,b}}$ is an isomorphism of categories. Theorem 
\ref{T:Ct_with_neck} implies then that the map $\Cbox(\partial^n)(a,b) : \Cbox(\partial\square^n)(a,b) \to \Cbox(\square^n)(a,b)$ is an 
isomorphism.
We conclude by Proposition \ref{P:sigmacolim}.
 We show next that $\Cbox_t(\partial\square^n) \to \Cbox_t(\square^n)$ is a cofibration. Indeed $\partial\square^n$ is a cubical subset of $
\square^n$, hence $\Cbox_t(\partial\square^n)\cong \colim_{T\in\SubNeck(\partial\square^n)}\Cbox_t(T)$  by Proposition \ref{Ct_with_subneck}, so 
that $\Cbox_t(\partial\square^n) \to \Cbox_t(\square^n)$ is a cofibration by Lemma \ref{lem_subneck}. The functor $\Sigma$ preserves 
cofibrations by Proposition \ref{adj_Sigma_Hom}, and so do pushout diagrams. This ends the proof.
\end{proof}

We refer to  Definition \ref{D:boiteinterne} for the notation in the next lemma.

\begin{lem}\label{neck_critical_edge} 
In the diagram below the horizontal arrows (where $p$ is the quotient map) are isomorphisms of simplicial sets:

\[\begin{tikzcd}
\Cbox(\sqcap^n_{i,\epsilon})(a,b) \ar[r, "\Cbox(p)","\cong"'] \ar[d] &\Cbox(\widehat{\sqcap}^n_{i,\epsilon})(pa,pb) \ar[d] \\
\Cbox(\square^n) \ar[r, "\Cbox(p)","\cong"'](a,b) &\Cbox(\widehat{\square}^n_{i,\epsilon})(pa,pb)
\end{tikzcd}\]
when $a\not=\{i\}$ if $\epsilon=1$, and $b\not=\{1,\ldots,n\}\setminus \{i\}$ if $\epsilon=0$.
\end{lem}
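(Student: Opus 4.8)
The plan is to analyze the quotient map $p : \square^n \to \widehat{\square}^n_{i,\epsilon}$ (which collapses the critical edge $e_{i,\epsilon}$) and its restriction to $\sqcap^n_{i,\epsilon}$, using the combinatorial description of $\Cbox(-)$ on cubical subsets of a cube via subnecklaces (Proposition \ref{Ct_with_subneck}) together with the description on $\widehat{\square}^n_{i,\epsilon}$. First I would recall that $e_{i,\epsilon}$ is the edge from $\alpha$ to $\{i\}$ when $\epsilon=1$ (resp. from $\{1,\dots,n\}\setminus\{i\}$ to $\omega$ when $\epsilon=0$), so that $pa=pb$ essentially never happens for the pairs $(a,b)$ allowed by the hypothesis. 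The hypothesis ``$a\neq\{i\}$ if $\epsilon=1$'' and ``$b\neq\{1,\dots,n\}\setminus\{i\}$ if $\epsilon=0$'' is exactly what guarantees that $a$ and $b$ are not the ``collapsed'' vertex, so that $p$ induces a bijection between the vertices $\{a,b\}$ and $\{pa,pb\}$, and more importantly an order isomorphism between the relevant subposets (there are no ``new'' paths created by the collapse in the range between $a$ and $b$).

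The key steps, in order, would be: (1) identify $\widehat{\square}^n_{i,\epsilon}$ concretely as the coequalizer/quotient of $\square^n$ by the critical edge, and describe $(\widehat{\square}^n_{i,\epsilon})_0$ as the quotient of $(\square^n)_0$ identifying the two endpoints of $e_{i,\epsilon}$; (2) show that under the hypothesis on $(a,b)$, the map $p$ induces an isomorphism of posets between $\SubNeck((\square^n)_{a,b})$ and $\SubNeck((\widehat{\square}^n_{i,\epsilon})_{pa,pb})$ — the point being that a subnecklace from $a$ to $b$ never passes through both endpoints of the collapsed edge as a nondegenerate bead since that would force it through $e_{i,\epsilon}$, which is disallowed by our choice of endpoints, and conversely every subnecklace downstairs lifts uniquely; (3) conclude the right-hand vertical isomorphisms $\Cbox(p)(a,b)$ and $\Cbox(p)(pa,pb)$ by transporting the colimit formula of Proposition \ref{Ct_with_subneck} through this isomorphism of indexing posets, noting that $\Cbox_t(T)$ depends only on the isomorphism type of the necklace $T$; (4) do the same for the inner open box $\sqcap^n_{i,\epsilon}$ versus $\widehat{\sqcap}^n_{i,\epsilon}$, observing that removing the face $\partial_{i,\epsilon}$ commutes with the collapse and that $\widehat{\sqcap}^n_{i,\epsilon}$ is again a cubical subset (after the collapse) to which Proposition \ref{Ct_with_subneck} applies, so the same subnecklace bijection restricts; (5) check that the square commutes, which is immediate from functoriality of $\Cbox$ applied to the commuting square of cubical sets $\sqcap^n_{i,\epsilon}\hookrightarrow\square^n$, $\widehat{\sqcap}^n_{i,\epsilon}\hookrightarrow\widehat{\square}^n_{i,\epsilon}$, $p$.

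The main obstacle I expect is step (2)–(4): precisely verifying that the collapse of the critical edge does not change the poset of subnecklaces between $a$ and $b$ under the stated hypothesis. One must be careful that a subnecklace $T\hookrightarrow (\square^n)_{a,b}$ could a priori have a bead that is a $1$-cube realizing the critical edge $e_{i,\epsilon}$, which would become degenerate downstairs; the hypothesis on $(a,b)$ rules this out because, e.g. for $\epsilon=1$, the critical edge starts at $\alpha$, so a bead equal to $e_{i,\epsilon}$ would have to be the \emph{first} bead of the necklace and would force the first internal vertex to be $\{i\}$ — but then since $a=\alpha$ one would need $a=\{i\}$ for the necklace to actually begin at the collapsed vertex, which is excluded; a symmetric argument handles $\epsilon=0$ at $\omega$. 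Making this case analysis airtight (including the positions of $a$ and $b$ relative to $\alpha,\omega$ — note the hypotheses implicitly force $a=\alpha$ in the $\epsilon=1$ case of interest and $b=\omega$ in the $\epsilon=0$ case, as otherwise $\Cbox(p)$ is trivially an iso for degenerate reasons) is the technical heart of the lemma; everything else is formal transport along an isomorphism of diagrams plus functoriality.
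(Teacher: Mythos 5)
There is a genuine gap at the heart of steps (2)--(4): Proposition \ref{Ct_with_subneck} applies only to cubical \emph{subsets} of a cube, whereas $\widehat{\square}^n_{i,\epsilon}$ and $\widehat{\sqcap}^n_{i,\epsilon}$ are \emph{quotients}, and the conclusion of that proposition actually fails for them. Concretely, take $n=2$, $(i,\epsilon)=(1,1)$, $a=\alpha$, $b=\omega$. In $\widehat{\square}^2_{1,1}$ the image of the nondegenerate $2$-cube is no longer a monomorphism (it identifies $\alpha$ with $\{1\}$), so $\SubNeck\bigl((\widehat{\square}^2_{1,1})_{p\alpha,p\omega}\bigr)$ has exactly two objects and no non-identity morphisms (the edge $p([\{1\},\{1,2\}])$ and the length-two necklace through $\{2\}$), and the colimit of $\Cbox_t$ over it is two disjoint points --- whereas the lemma asserts $\Cbox(\widehat{\square}^2_{1,1})(p\alpha,p\omega)\cong\Cbox(\square^2)(\alpha,\omega)=N(\Sigma_2)=\Delta^1$. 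For the same reason your claimed isomorphism of $\SubNeck$ posets cannot hold: postcomposition with $p$ does not preserve monomorphisms, and after epi--mono factorisation the two posets have different shapes (upstairs $\id_{\square^n}$ is a greatest element; downstairs its image is not a subnecklace at all). The non-monic necklace maps into the quotient are precisely what glue the paths together, so they cannot be discarded; a necklace-based proof would have to work with the full slice category of Theorem \ref{T:Ct_with_neck} and establish a finality comparison between $\slicecat{\Nec}{(\square^n)_{a,b}}$ and $\slicecat{\Nec}{(\widehat{\square}^n_{i,\epsilon})_{pa,pb}}$, which you have not done.

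The paper avoids all of this by a direct pushout computation: since $\Cbox$ is cocontinuous and $\widehat{\square}^n_{i,\epsilon}$ is the pushout of $\square^n$ along $e_{i,\epsilon}$ and $\square^1\to\square^0$, one writes $\Cbox(\widehat{\square}^n_{i,\epsilon})$ as the corresponding pushout in $s\Cat$ and identifies it with an explicitly described simplicial category $S$ having the same hom-spaces as $\Cbox(\square^n)$ away from the collapsed vertex; the verification rests on the single fact that $\Cbox(\square^n)(\alpha,\{i\})=*$ (for $\epsilon=1$), and the same computation handles $\sqcap^n_{i,\epsilon}$. Your steps (1) and (5), and your identification of which pairs $(a,b)$ are the delicate ones and why the side condition is needed, are correct; but the mechanism you propose for the key isomorphisms does not work.
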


\begin{proof}
We prove the isomorphism $\Cbox(p):\Cbox(\square^n) \to \Cbox(\widehat{\square}^n_{i,\epsilon})$ with $\epsilon=1$ and $i=1$, so that 
the critical edge $e_{1,1}:\square^1\rightarrow \square^n$ corresponds to the edge from $\alpha=\emptyset$ to $\{1\}$. 
The other cases are similar.
Since $\Cbox$ preserves colimits, we have the pushout diagram
\[\xymatrix{\Cbox(\square^1)\ar[r]^{\Cbox(e_{1,1})}\ar[d]& \Cbox(\square^n)\ar[d]\\
\Cbox(\square^0)\ar[r] &\Cbox(\widehat{\square}^n_{1,1})}\]
Let us define the following simplicial category $S$. The set of objects of $S$ is identified with that of $\widehat{\square}^n_{1,1}$. We denote by 
$\bar\alpha=p(\alpha)=p(\{1\})$. 
Define $S(\bar\alpha,\bar\alpha)=*$, $S(\bar\alpha,b)=\Cbox(\square^n)(\alpha,b)$ for $b\not=\bar\alpha$ and $S(a,b)=\Cbox(\square^n)(a,b)$, for 
$a\not=\bar\alpha$. The composition is induced by that of $\Cbox(\square^n)$. 
Let $\pi:\Cbox(\square^n)\rightarrow S$ be the map which coincides with $p$ on objects, and  is the identity on morphisms except for the case 
$\pi:\Cbox(\square^n)(\{1\},b)\rightarrow S(\bar\alpha,b)$, for which we use the composite
\[\xymatrix{
\Cbox(\square^n)(\{1\},b)\ar[r]&\Cbox(\square^n)(\{1\},b)\times \Cbox(\square^n)(\alpha,\{1\})\ar[r]^-{\circ}&
\Cbox(\square^n)(\alpha,b)\cong S(\bar\alpha,b)},\]
which is well defined since $\Cbox(\square^n)(\alpha,\{1\})=*$. 
One checks  easily that $S$ satisfies the universal pushout property, hence $\Cbox(\widehat{\square}^n_{1,1}) \cong  S$. We conclude, 
since by definition of $S$ we have
\[\Cbox(\widehat{\square}^n_{1,1})(pa,pb)\cong S(pa,pb) = \Cbox(\square^n)(a,b)\quad\mbox{if}\; a\not=\{1\}.
\]
The side condition is needed since, for $a=\{1\}$ and $b\succ 1$, we have $S(p(\{1\}),b)=S(\bar\alpha,b)= \Cbox(\square^n)(\alpha,b)\not\cong
\Cbox(\square^n)(\{1\},b)$.
The proof for the inner box is exactly the same since $\Cbox(\sqcap^n_{1,1})(\alpha,\{1\})=*$.
\end{proof}

\begin{prop}\label{C_hn}
$\Cbox(h^n_{i,\epsilon}) : \Cbox(\widehat{\sqcap}^n_{i,\epsilon}) \to \Cbox(\widehat{\square}^n_{i,\epsilon})$ is an acyclic cofibration.
\end{prop}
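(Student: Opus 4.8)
The plan is to establish the cofibration and weak-equivalence clauses separately. For the cofibration: since $\widehat{\square}^n_{i,\epsilon}$ is the pushout of the span $\widehat{\sqcap}^n_{i,\epsilon}\twoheadleftarrow\sqcap^n_{i,\epsilon}\hookrightarrow\square^n$, where the left map collapses the critical edge $e_{i,\epsilon}$ (which, for $n\geq 2$, lies on a codimension-one face of $\square^n$ other than $\partial_{i,\epsilon}$, hence belongs to $\sqcap^n_{i,\epsilon}$), and since $\Cbox$ is cocontinuous, the map $\Cbox(h^n_{i,\epsilon})$ is a pushout of $\Cbox(\sqcap^n_{i,\epsilon}\hookrightarrow\square^n)$. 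The latter is a cofibration of simplicial categories by Proposition \ref{C_preserves_cof}, so $\Cbox(h^n_{i,\epsilon})$ is one too.

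For the weak-equivalence clause, first note that $h^n_{i,\epsilon}$ is a bijection on objects: for $n\geq 2$ every vertex of $\square^n$ lies on a codimension-one face other than $\partial_{i,\epsilon}$, so $\sqcap^n_{i,\epsilon}$ has all the vertices of $\square^n$, and on both sides the quotient identifies exactly the two endpoints of the critical edge. Hence $\Cbox(h^n_{i,\epsilon})$ is a bijection on objects, and it suffices to show that for every pair of vertices $x,y$ of $\widehat{\square}^n_{i,\epsilon}$ the induced map on mapping spaces $\Cbox(\widehat{\sqcap}^n_{i,\epsilon})(x,y)\to\Cbox(\widehat{\square}^n_{i,\epsilon})(x,y)$ is a weak equivalence of simplicial sets. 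Here the side condition of Lemma \ref{neck_critical_edge} is crucial: I would pick representatives $a,b\in(\square^n)_0$ of $x,y$ with $a\neq\{i\}$ when $\epsilon=1$ (resp. $b\neq\{1,\dots,n\}\setminus\{i\}$ when $\epsilon=0$) — always possible, since the only two vertices glued together by the quotient are the endpoints of the critical edge, one of which is $\alpha$ (resp. $\omega$). Lemma \ref{neck_critical_edge} then provides a commuting square with invertible horizontal arrows identifying $\Cbox(h^n_{i,\epsilon})(x,y)$ with the map $\Cbox(\sqcap^n_{i,\epsilon})(a,b)\to\Cbox(\square^n)(a,b)$ induced by the inclusion. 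This change of representative really is needed: $\Cbox(\sqcap^n_{i,\epsilon}\hookrightarrow\square^n)$ is \emph{not} a weak equivalence in general — already for $n=2$ the space $\Cbox(\sqcap^2_{i,\epsilon})(\{i\},\{1,2\})$ is empty while $\Cbox(\square^2)(\{i\},\{1,2\})$ is a point.

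It thus remains to prove that, for $a\preccurlyeq b$ satisfying the side condition, the simplicial set $\Cbox(\sqcap^n_{i,\epsilon})(a,b)$ is contractible (when $a\not\preccurlyeq b$ both spaces are empty, since a necklace over $(\sqcap^n_{i,\epsilon})_{a,b}$ composed with the inclusion yields a necklace over $(\square^n)_{a,b}$, forcing $a\preccurlyeq b$). By Corollary \ref{Ct_nacklaces_contractible} applied to the necklace $\square^n$, the target $\Cbox(\square^n)(a,b)$ is contractible, so indeed only the source remains to be understood. Since $\sqcap^n_{i,\epsilon}$ is a cubical subset of $\square^n$, Proposition \ref{Ct_with_subneck} presents $\Cbox(\sqcap^n_{i,\epsilon})(a,b)$ as the colimit $\colim_{\SubNeck((\sqcap^n_{i,\epsilon})_{a,b})}\Cbox_t(T)$ of contractible simplicial sets (Corollary \ref{Ct_nacklaces_contractible}) over the poset of sub-necklaces. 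As in the proof of Proposition \ref{C_preserves_cof}, the plan is to invoke the appendix material (Lemma \ref{lem_subneck} and the accompanying projection-type lemmas): this colimit computes a homotopy colimit, and the indexing poset $\SubNeck((\sqcap^n_{i,\epsilon})_{a,b})$ is shown to have contractible nerve, whence $\Cbox(\sqcap^n_{i,\epsilon})(a,b)$ is contractible and $\Cbox(h^n_{i,\epsilon})(x,y)$ is a weak equivalence.

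The technical heart — and the step I expect to be the main obstacle — is this last point: the combinatorial analysis of the sub-necklace poset $\SubNeck((\sqcap^n_{i,\epsilon})_{a,b})$. One has to verify both that the diagram $T\mapsto\Cbox_t(T)$ is cofibrant enough for the strict colimit of Proposition \ref{Ct_with_subneck} to compute a homotopy colimit, and that the poset of sub-necklaces of $\sqcap^n_{i,\epsilon}$ running from $a$ to $b$ has trivial homotopy type. This is exactly where the combinatorics of ``all codimension-one faces but $\partial_{i,\epsilon}$'' enters, and where the side condition inherited from Lemma \ref{neck_critical_edge} is needed — it guarantees that $a\preccurlyeq b$ admits a path representative lying inside $\sqcap^n_{i,\epsilon}$, so that this poset is nonempty to begin with.
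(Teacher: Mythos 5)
Your reduction is sound and, modulo packaging, follows the same route as the paper's proof. Two differences are worth recording. First, your cofibration argument --- exhibiting $h^n_{i,\epsilon}$ as the pushout of $\sqcap^n_{i,\epsilon}\hookrightarrow\square^n$ along the collapse of the critical edge and using cocontinuity of $\Cbox$ together with Proposition \ref{C_preserves_cof} --- is correct and arguably cleaner than the paper's, which instead realises $\Cbox(h^n_{i,\epsilon})$ as a pushout of $\Sigma(\Cbox_t(h^n_{i,\epsilon}))$ via Proposition \ref{P:sigmacolim}; the paper's packaging has the advantage that the same pushout square delivers cofibrancy and acyclicity simultaneously once $\Cbox_t(\sqcap^n_{i,\epsilon})\to\Cbox_t(\square^n)$ is known to be an acyclic cofibration. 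Second, on mapping spaces you propose to prove contractibility of $\Cbox(\sqcap^n_{i,\epsilon})(a,b)$ uniformly for all $a\preccurlyeq b$ satisfying the side condition, whereas the paper observes that for $(a,b)\neq(\alpha,\omega)$ that side condition forces the face $[a,b]\cong\square^{d(a,b)}$ to lie entirely inside $\sqcap^n_{i,\epsilon}$, so that $\slicecat{\Nec}{(\sqcap^n_{i,\epsilon})_{a,b}}\to\slicecat{\Nec}{(\square^n)_{a,b}}$ is an isomorphism of categories and the induced map of mapping spaces is an isomorphism, not merely a weak equivalence; the homotopical apparatus is then needed only for the single pair $(\alpha,\omega)$. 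Your uniform treatment is not wrong --- for $(a,b)\neq(\alpha,\omega)$ the poset $\SubNeck((\sqcap^n_{i,\epsilon})_{a,b})$ coincides with $\SubNeck(\square^{d(a,b)})$, which has a greatest element and hence contractible nerve --- but the appendix result you would be invoking (Proposition \ref{C:nerf_subneck}) only treats the double pointing $(\alpha,\omega)$, so you would owe that extra observation.

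The step you yourself flag as the main obstacle is indeed where essentially all the work lies, and you have not supplied it: the contractibility of $N(\SubNeck(\sqcap^n_{i,\epsilon}))$ (Proposition \ref{C:nerf_subneck}, proved by decomposing the poset of ordered partitions into upward-closed pieces and contracting each onto a smaller partition poset via poset adjunctions), and the Reedy cofibrancy of $T\mapsto\Cbox_t(T)$ over the direct category $\SubNeck(\sqcap^n_{i,\epsilon})$, whose latching maps are monomorphisms by Lemma \ref{lem_subneck}; together these let the strict colimit of Proposition \ref{Ct_with_subneck} compute the homotopy colimit. As written, your text is therefore a correct reduction to these two appendix statements rather than a complete argument, but the reduction is the paper's.
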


\begin{proof}
The proof is analogous to the proof of Proposition  \ref{C_preserves_cof}. Note that the set of vertices of 
$\widehat{\sqcap}^n_{i,\epsilon}$  coincides  with that of  $\widehat{\square}^n_{i,\epsilon}$ and is a bounded poset. 
By Proposition \ref{P:sigmacolim},
in order to establish that the diagram
\[\begin{tikzcd}
\Sigma(\Cbox_t(\widehat{\sqcap}^n_{i,\epsilon})) \ar[r] \ar[d, swap, "\Sigma(\Cbox_t(h^n_{i,\epsilon}))"] & 
\Cbox(\widehat{\sqcap}^n_{i,\epsilon}) \ar[d, "\Cbox(h^n_{i,\epsilon})"] \\
\Sigma(\Cbox_t(\widehat{\square}^n_{i,\epsilon})) \ar[r] & \Cbox(\widehat{\square}^n_{i,\epsilon})
\end{tikzcd}\]
is a pushout diagram, we have to show that the maps $\Cbox(h^n_{i,\epsilon})(pa,pb) : \Cbox(\widehat{\sqcap}^n_{i,\epsilon})(pa,pb) \to 
\Cbox(\widehat{\square}^n_{i,\epsilon})
(pa,pb)$ are isomorphisms, for every $(a,b)$ such that $(pa,pb)\not=(p\alpha,p\omega)$, or equivalently using Lemma \ref{neck_critical_edge}, that 
$\Cbox(h^n_{i,\epsilon})(a,b) : 
\Cbox({\sqcap}^n_{i,\epsilon})(a,b) \to \Cbox({\square}^n_{i,\epsilon})(a,b)$ is an isomorphism. The latter is established exactly as in the proof 
of Proposition  \ref{C_preserves_cof}, noticing that $\square^{d(a,b)} \subseteq \sqcap^n_{i,\epsilon}$ for such $(a,b)$.
What remains to prove is that {$\Cbox_t(\widehat{\sqcap}^n_{i,\epsilon})\rightarrow \Cbox_t(\widehat{\square}^n_{i,\epsilon})$ is an acyclic 
cofibration, or equivalently, by Lemma \ref{neck_critical_edge} again, that}
$\Cbox_t({\sqcap}^n_{i,\epsilon})\to \Cbox_t({\square}^n_{i,\epsilon})$ is an acyclic cofibration for the Kan-Quillen structure on simplicial sets. We 
already know that it is a  cofibration, since  $\Cbox$ preserves cofibrations by Proposition \ref{C_preserves_cof} and so does $\Cbox_t$.
Since  $\Cbox_t(\square^n)$ is contractible (cf. Lemma \ref{Ct_cubes_contractible}), showing that 
$\Cbox_t(\sqcap^n_{i,\epsilon}) \to \Cbox_t(\square^n)$  is acyclic amounts to proving that $\Cbox_t(\sqcap^n_{i,\epsilon})$ is contractible.
 From Proposition \ref{Ct_with_subneck}, we have
 \[\Cbox_t(\sqcap^n_{i,\epsilon})=\colim\limits_{T\in \SubNeck(\sqcap^n_{i,\epsilon})}\Cbox_t(T).\] 
 Furthermore, it follows from Section \ref{S:B1} that the category $\SubNeck(\sqcap^n_{i,\epsilon})$ is direct (and hence is Reedy). The
 diagram $(T\to\sqcap^n_{i,\epsilon})\mapsto \Cbox_t(T)$ is Reedy cofibrant: for every $T\in \SubNeck(\sqcap^n_{i,\epsilon})$, 
 the latching morphism 
 \[\colim\limits_{\substack{U \in \SubNeck(T) \\ U\neq T}}\Cbox_t(U) \to \Cbox_t(T)\] 
 is a monomorphism by Lemma \ref{lem_subneck}.
It follows from \cite[Theorem 19.9.1]{H}, and from the fact that any direct category has fibrant constants, that
  the natural map
 \[\hocolim_{T\in \SubNeck(\sqcap^n_{i,\epsilon})} \Cbox_t(T)\rightarrow \colim_{T\in \SubNeck(\sqcap^n_{i,\epsilon})}\Cbox_t(T)\]
 is a weak equivalence of simplicial sets. In conclusion we have
\[
\Cbox_t(\sqcap^n_{i,\epsilon})   \sim \hocolim\limits_{T\in \SubNeck(\sqcap^n_{i,\epsilon})}\Cbox_t(T)\\
 \sim 
\hocolim\limits_{T\in \SubNeck(\sqcap^n_{i,\epsilon})}* \sim  N(\SubNeck(\sqcap^n_{i,\epsilon})) \sim  *
\]
since $\Cbox_t(T)$ and $N(\SubNeck(\sqcap^n_{i,\epsilon}))$ are contractible, by Corollary \ref{Ct_nacklaces_contractible} 
and Proposition \ref{C:nerf_subneck}.
\end{proof}

\subsection{Quillen adjunction}

We shall use the following result of Joyal \cite[E.2.14]{J}.

\begin{prop}\label{Joyal_result}
An adjunction $L\dashv R$ between two model categories is Quillen if and only if
$L$ preserves cofibrations and 
$R$ preserves fibrations between fibrant objects.
\end{prop}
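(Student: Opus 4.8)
The plan is to establish the two directions separately; the forward one is immediate and the backward one carries all the content. In the forward direction, if $L\dashv R$ is a Quillen adjunction then by definition $L$ preserves cofibrations and $R$ preserves all fibrations, in particular those between fibrant objects, so there is nothing to prove.

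For the backward direction I would recall that an adjunction between model categories is Quillen precisely when the left adjoint preserves cofibrations and trivial cofibrations; since $L$ preserves cofibrations by hypothesis, it remains to show that $L$ preserves trivial cofibrations. So let $i\colon A\to B$ be a trivial cofibration. Then $L(i)$ is a cofibration, and the task reduces to showing it is a weak equivalence. First I would note that $L(i)$ has the left lifting property against every fibration $q\colon P\to Q$ between fibrant objects of the target category: by the adjunction such a lifting problem transposes to a lifting problem of $i$ against $R(q)$, which is a fibration by hypothesis, and $i$, being a trivial cofibration, lifts against every fibration. Thus everything comes down to the following lemma, which I would prove next: in any model category, a morphism that has the left lifting property against every fibration between fibrant objects is a weak equivalence.

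To prove this lemma for a morphism $g\colon C\to D$, the plan is to replace only the codomain and extract a lift. Choose a trivial cofibration $j\colon D\to \widehat D$ with $\widehat D$ fibrant, and factor the composite $jg$ as a trivial cofibration $s\colon C\to E$ followed by a fibration $q\colon E\to \widehat D$; then $E$ is fibrant, being fibrant over $\widehat D$, so $q$ is a fibration between fibrant objects. The evident square with $s$ on top, $j$ on the bottom, $g$ on the left and $q$ on the right commutes, so by hypothesis it admits a diagonal $\ell\colon D\to E$ with $\ell g=s$ and $q\ell=j$. Passing to the homotopy category via the localisation functor $\gamma$, and using that $\gamma(s)$ and $\gamma(j)$ are isomorphisms since $s,j$ are trivial cofibrations, the relation $q\ell=j$ shows that $\gamma(\ell)$ has a left inverse while $\ell g=s$ shows it has a right inverse; hence $\gamma(\ell)$ is an isomorphism, and then $\gamma(g)=\gamma(\ell)^{-1}\gamma(s)$ is an isomorphism as well. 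Since a morphism of a model category is a weak equivalence exactly when its image in the homotopy category is invertible, $g$ is a weak equivalence, which proves the lemma and hence the proposition.

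I do not expect a serious obstacle once this route is found, but the step most likely to cause trouble is the lemma itself. The natural first instinct is to perform fibrant and cofibrant replacements and invoke Ken Brown's lemma; that is workable but awkward because the objects in play need not be cofibrant or fibrant. The cleaner move, as above, is to replace only the codomain, produce the lift, and finish with the elementary observation that a morphism possessing both a left and a right inverse is invertible, now applied in the homotopy category. One should also keep in mind that the lemma only yields a weak equivalence, so it is essential that $L(i)$ is independently known to be a cofibration, which is exactly the hypothesis that $L$ preserves cofibrations.
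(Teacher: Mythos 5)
Your proof is correct. The paper itself gives no argument for this proposition: it simply cites Joyal (E.2.14 of the reference \cite{J}), so there is no in-paper proof to compare against; your write-up is a complete and standard proof of the cited result. The reduction to the lemma via adjoint transposition is exactly right, and the lemma's proof — fibrant replacement of the codomain, factorisation of $jg$, extraction of the lift $\ell$, and the left-inverse/right-inverse argument in the homotopy category — is sound. The only ingredient worth flagging is the final appeal to saturation (a map is a weak equivalence iff it becomes invertible in the homotopy category), which is a genuine theorem of Quillen rather than a formality; equivalently one could finish by the two-out-of-six property applied to the composable triple $g$, $\ell$, $q$, since $\ell g=s$ and $q\ell=j$ are weak equivalences. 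Either way the argument closes.
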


In view of this proposition, what remains to prove is that $\Nbox$ sends fibrations between fibrant categories to fibrations between cubical quasi-categories. 
We shall use two lemmas, which we now present. We refer to Definition \ref{D:special} for the definition of equivalence and that of special open box. The first lemma is tautological.

\begin{lem} \label{lem_fib_technical2}
Let $\EuScript{A}$ be a fibrant simplicial category, $v : \Cbox(\square^1) \to \EuScript{A}$ and $\tilde{v} : \square^1\to\Nbox(\EuScript{A})$ 
its transpose. As $\Cbox(\square^1)$ is the simplicial category with only one non-trivial arrow, we  see $v$ as an arrow in $\EuScript{A}$.
Then $\tilde{v}$ is an equivalence in the cubical quasi-category $\Nbox(\EuScript{A})$ if and only if $\pi_0(v)$ is an isomorphism in $\pi_0(\EuScript{A})$.
\end{lem}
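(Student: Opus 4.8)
The plan is to transpose the whole statement across the adjunction $\Cbox\dashv\Nbox$ and to observe that, once the relevant cubical and simplicial objects are unfolded, the two conditions become the very same elementary assertion about $v$. Write $x=v(\alpha)$ and $y=v(\omega)$, so that $v$ is viewed as an arrow $x\to y$ of $\EuScript{A}$. By Definition~\ref{D:special}, $\tilde v$ is an equivalence precisely when it extends along the inclusion $m:\square^1\hookrightarrow K$ of the middle edge; by adjointness, a filler $K\to\Nbox(\EuScript{A})$ of this extension problem is the same datum as a functor $\Cbox(K)\to\EuScript{A}$ restricting to $v$ along $\Cbox(m):\Cbox(\square^1)\to\Cbox(K)$. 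So the first step is to reduce the lemma to the statement: \emph{$v$ extends along $\Cbox(m)$ if and only if $\pi_0(v)$ is invertible in $\pi_0(\EuScript{A})$}.

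For the second (and main) step I would compute $\Cbox(K)$ by hand. The cubical set $K$ is obtained by gluing its two nondegenerate $2$-cubes along a common edge, so by Proposition~\ref{P:NDfactorisation}(2) a map $K\to\Nbox(\EuScript{A})$ is exactly a pair of $2$-cubes of $\Nbox(\EuScript{A})$, i.e.\ of functors $\Cbox(\square^2)\to\EuScript{A}$, that agree on the shared edge and its endpoints (and send the degenerate edges of $K$ to identities). Now $\Cbox(\square^2)(\alpha,\omega)\cong\Delta^1$ by Proposition~\ref{P:pathcubes}, its two vertices being the two ``extreme'' composites along the square. Reading off the degeneracies of $K$ --- three of the four corners of each square are identified --- the left $2$-cube of $K$ amounts to the choice of an arrow $r:y\to x$ together with a $1$-simplex of $\EuScript{A}(y,y)$ joining $v\circ r$ to $\mathrm{id}_y$, while the right $2$-cube amounts to an arrow $r':y\to x$ together with a $1$-simplex of $\EuScript{A}(x,x)$ joining $r'\circ v$ to $\mathrm{id}_x$; since the two squares overlap only in the edge that is forced to be $v$, no further compatibility condition intervenes. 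Hence $v$ extends along $\Cbox(m)$ if and only if there exist arrows $r,r':y\to x$ with $[v\circ r]=[\mathrm{id}_y]$ in $\pi_0\EuScript{A}(y,y)$ and $[r'\circ v]=[\mathrm{id}_x]$ in $\pi_0\EuScript{A}(x,x)$, that is, if and only if $\pi_0(v)$ has a right and a left inverse in $\pi_0(\EuScript{A})$ --- equivalently, $\pi_0(v)$ is an isomorphism. Combined with the first step, this gives both implications of the lemma.

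The only point that is not pure unwinding of definitions --- and the place where fibrancy of $\EuScript{A}$ is used --- is the ``if'' direction of the last equivalence: given that $\pi_0(v)$ is invertible with inverse $[s]$, the identities $[v\circ s]=[\mathrm{id}_y]$ and $[s\circ v]=[\mathrm{id}_x]$ hold in $\pi_0$, and because $\EuScript{A}$ is fibrant the homsets $\EuScript{A}(y,y)$ and $\EuScript{A}(x,x)$ are Kan complexes, so these equalities are realized by genuine $1$-simplices, which may then be reoriented as needed to assemble the functor $\Cbox(K)\to\EuScript{A}$; the ``only if'' direction needs no fibrancy. I expect the main (really the only) obstacle to be the bookkeeping in the second step: correctly matching the two faces of $\square^2$ against the degenerate edges of $K$ and tracking orientations, so that the canonical $1$-simplex of $\Cbox(\square^2)(\alpha,\omega)$ is seen to join exactly $\mathrm{id}_y$ to $v\circ r$ (resp.\ $\mathrm{id}_x$ to $r'\circ v$).
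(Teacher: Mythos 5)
Your proposal is correct, and it is precisely the unwinding that the paper itself omits: the authors declare this lemma ``tautological'' and give no proof, and your argument (transpose across $\Cbox\dashv\Nbox$, read off the two nondegenerate $2$-cubes of $K$ as homotopies $v\circ r\sim\mathrm{id}$ and $r'\circ v\sim\mathrm{id}$ in the Kan hom-complexes, and use fibrancy to realize and reorient the needed $1$-simplices in the converse direction) is exactly the intended content. The orientation bookkeeping you flag is indeed harmless, since for both implications only the $\pi_0$-classes matter and the hom-spaces are Kan.
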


\begin{lem} \label{lem_fib_technical1}
Let $p:X \to Y$ be an inner fibration between cubical quasi-categories and $\sqcap^2_{i,\epsilon} \to X$ a special open box. Any commutative square of the following form has a lift:
\[\begin{tikzcd}
\sqcap^2_{i,\epsilon}  \ar[r] \ar[d] & X \ar[d, "p"] \\
\square^2 \ar[ur, dashed] \ar[r] & Y
\end{tikzcd}\]
\end{lem}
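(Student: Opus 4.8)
The plan is to reduce the stated lifting problem to the \emph{defining} right lifting property of an inner fibration, by adjoining the cubical interval $K$ along the critical edge and showing that the resulting inclusion is inner anodyne. This is the cubical analogue, in dimension $2$, of the classical fact that inner fibrations of quasi-categories lift ``special outer horns'' (cf.\ \cite{L}).

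Concretely, let $e = e_{i,\epsilon}\colon \square^1 \to \sqcap^2_{i,\epsilon}$ be the critical edge and form the pushouts $\sqcap^2_{i,\epsilon}\cup_e K$ and $\square^2\cup_e K$ along the middle-edge inclusion $\square^1 \to K$. Since the given map $\sqcap^2_{i,\epsilon}\to X$ sends $e$ to an equivalence, it factors through $\square^1\to K$ by Definition~\ref{D:special}, hence extends to $\sqcap^2_{i,\epsilon}\cup_e K \to X$. Post-composing with $p$ and gluing with the given map $\square^2\to Y$ — the two agree on $e$, as both restrict there to $p$ applied to the image of the critical edge — yields $\square^2\cup_e K \to Y$. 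Thus the original square factors as
\[
\begin{tikzcd}
\sqcap^2_{i,\epsilon}\cup_e K \ar[r]\ar[d, hook, "\iota"'] & X \ar[d, "p"] \\
\square^2\cup_e K \ar[r] & Y.
\end{tikzcd}
\]
If the cofibration $\iota$ is inner anodyne — i.e.\ lies in the saturation of the inner open box inclusions $h^n_{j,\delta}$ — then $p$ admits a lift against $\iota$, and restricting this lift along $\square^2\hookrightarrow \square^2\cup_e K$ solves the original problem.

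It then remains to prove that $\iota$ is inner anodyne. There are four pairs $(i,\epsilon)$, treated by entirely analogous arguments (with only minor differences between $\epsilon=0$ and $\epsilon=1$, since the connections are negative only); I would detail $i=\epsilon=1$, where the critical edge is $\emptyset\to\{1\}$ and the box data is an equivalence $f\colon a\to a'$ together with $g\colon a\to b$ and $h\colon b\to c$, while $K$ supplies an inverse $\bar f\colon a'\to a$ and witnessing $2$-cubes for $f\bar f\simeq \id$ and $\bar f f\simeq\id$. I would exhibit $\iota$ as a finite composite of pushouts of maps $h^n_{j,\delta}$: first compose $\bar f$ with $g$ (a pushout of a $2$-dimensional inner open box inclusion), producing an edge $m\colon a'\to b$ and a witnessing $2$-cube; then compose $m$ with $h$, producing $k\colon a'\to c$ and a witnessing $2$-cube; finally, the $K$-witnesses for $f$ together with these two composition $2$-cubes and one further inner filler form five of the six faces of a $3$-cube whose sixth face has boundary $f,g,h,k$, so attaching that $3$-cube is a pushout of a $3$-dimensional inner open box inclusion whose collapsed critical edge is the $f$-derived equivalence; its sixth face is exactly the $2$-cube that, together with $k$, extends the box over $\square^2$.

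The main obstacle is this last combinatorial step: one must pin down the indices $(j,\delta)$ of the $3$-dimensional inner open box so that (a) it is genuinely inner; (b) its collapsed critical edge is precisely the equivalence to be discarded — this is where the hypothesis ``special'' is used; (c) the five faces matched to the auxiliary $2$-cubes glue coherently along their shared edges; and (d) the remaining face carries the boundary $f,g,h,k$ with $f$ sitting in the critical slot. A secondary verification is that each intermediate attachment is indeed a pushout of some $h^n_{j,\delta}$ along a monomorphism, i.e.\ that the relevant sub-paths and sub-faces embed in the cubical set under construction as honest subobjects isomorphic to the corresponding $\widehat{\sqcap}^n_{j,\delta}$.
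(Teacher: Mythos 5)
Your overall strategy coincides with the paper's: both proofs glue the cubical interval $K$ onto the open box along the critical edge and then assemble the $K$-witnesses, two composition squares and the sought filler into a partially filled $3$-cube which is completed by inner box fillings. However, as written your argument has two genuine gaps. First, the reduction to ``$\iota$ is inner anodyne'' does not match what your construction produces. The attachments you describe create cells that do \emph{not} live in $\square^2\cup_e K$: the composite edge $m\colon a'\to b$, the two composition $2$-cubes, the extra inner filler and the $3$-cube itself. So at best you obtain an inner anodyne map $\sqcap^2_{i,\epsilon}\cup_e K\to Z$ with $Z\supsetneq\square^2\cup_e K$, not the map $\iota$. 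To lift against $p$ you must then extend the given map $\square^2\cup_e K\to Y$ to all of $Z$ in such a way that the distinguished face of the attached $3$-cube is sent to the prescribed $2$-cube $B\colon\square^2\to Y$; this is not automatic, and it is precisely the content of the paper's intermediate step, where the $3$-cube is filled in $Y$ with prescribed front face by first filling a \emph{special} (not inner) open box in $Y$ and then using that the critical edge of the remaining missing face is degenerate. This also explains why the lemma assumes $X$ and $Y$ are cubical quasi-categories: if $\iota$ itself were inner anodyne, those hypotheses would be superfluous, which strongly suggests it is not, and that the equivalence hypothesis must be consumed by fillings in $X$ and in $Y$ rather than by a purely cofibration-theoretic statement.

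Second, you explicitly defer the central combinatorial verification -- pinning down the indices $(j,\delta)$ of the $3$-dimensional box, checking that the five given faces glue coherently along shared edges, and that the critical edge sits where the equivalence does. This is the actual substance of the proof (the paper carries it out with explicit pictures of the progressively filled $3$-cube, in three stages: the absolute case $Y=*$, the filling in $Y$, and the relative filling in $X$ against $p$). Until that verification is done and the extension-over-$Y$ issue above is addressed, the proof is incomplete.
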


\begin{proof} It is a special case of \cite[Lemma 4.14]{DKLS}. We give a proof for the case $(i,\epsilon) = (1,1)$, the other cases being similar.
We represent the map $\sqcap^2_{1,1} \to X$ on the left below, with $f$ an equivalence. 
 The $1$-cube $f$ being an equivalence, the middle diagram below exists.
 Glueing the two diagrams,
we get the following partially filled 3-cube in $X$ (on the right):
\[
\begin{tikzcd}
\ar[r, "f"] \ar[d, "u"] & {}\\
\ar[r, "v"] & {}
\end{tikzcd}
\quad\quad
\begin{tikzcd}
\ar[r, equal] \ar[d, "f"]& \ar[d, equal] \\
 \ar[r,  "g"] &  {}
\end{tikzcd}
\quad\quad
\begin{tikzcd}[column sep=15pt, row sep=10pt]
& \ar[rr, equal]  {} & & {} \\
\ar[ur, equal] \ar[rr, "\ \ \ \ f"] \ar[dd, swap, "u"] & & \ar[ur, "g" near start]  {} & \\
& & & {} \\
\ar[rr, "v"]  & & {}
\end{tikzcd}
\]
where our conventions for the coordinates in dimensions 2 and 3 are as follows:
\[\begin{tikzcd}
\ar[r] \ar[d] & 1\\
2 & {}
\end{tikzcd}
\quad\quad\quad\quad
\begin{tikzcd}[column sep=15pt, row sep=15pt]
& 1\\
\ar[ur]
\ar[r] \ar[d] & 2\\
3 & {}
\end{tikzcd}
\]
The map $B:\square^2\rightarrow Y$ is represented by the following  $2$-cube in $Y$:\qquad
\begin{tikzcd}
\ar[r, "pf"] \ar[d, "pu"]& \ar[d, "w"] \\
 \ar[r,  "pv"] &  {}
\end{tikzcd}

The proof goes in 3 steps. For the first step, we assume $Y=*$.
We complete the above 3-cube cube progressively, as follows:

\[
\begin{tikzcd}[column sep=15pt, row sep=15pt]
& \ar[rr, equal] \ar[dd, swap, "u" near end] & & {} \\
\ar[ur, equal] \ar[rr, "\ \ \ \ f"] \ar[dd, swap, "u"] & & \ar[ur, "g"] & \\
& \ar[rr, "v" near start] & & {} \\
\ar[rr, "v"] \ar[ur, equal] & & \ar[ur, equal]
\end{tikzcd}\qquad
\begin{tikzcd}[column sep=15pt, row sep=15pt]
& \ar[rr, equal] \ar[dd, swap, "u" near end] & & \ar[dd, dashed] \\
\ar[ur, equal] \ar[rr, "\ \ \ \ f"] \ar[dd, swap, "u"] & & \ar[ur, "g"]  & \\
& \ar[rr, "v" near start] & & {} \\
\ar[rr, "v"] \ar[ur, equal] & & \ar[ur, equal]
\end{tikzcd}\qquad
\begin{tikzcd}[column sep=15pt, row sep=15pt]
& \ar[rr, equal] \ar[dd, swap, "u" near end] & & \ar[dd] \\
\ar[ur, equal] \ar[rr, "\ \ \ \ f"] \ar[dd, swap, "u"] & & \ar[ur, "g"] \ar[dd, dashed] & \\
& \ar[rr, "v" near start] & & {} \\
\ar[rr, "v"] \ar[ur, equal] & & \ar[ur, equal]
\end{tikzcd}\]

The top face is full by hypothesis, the left and the bottom faces are given by degeneracies. 
Because $X$ is a cubical quasi-category, 
the back face (picture in the middle), then the right face (picture on the right), and finally the whole cube, and hence a fortiori the front face, can be filled.
In consequence any special open box in a cubical quasi-category $X$ can be filled by a $2$-cube in $X$.

The second step consists in filling the following $3$-cube in $Y$, where the front, top,  left and bottom faces are already filled.

\[
\begin{tikzcd}[column sep=15pt, row sep=15pt]
& \ar[rr, equal] \ar[dd, swap, "pu" near end] & & \ar[dd,dashed,"\rho"] \\
\ar[ur, equal] \ar[rr, "\ \ \ \ pf"] \ar[dd, swap, "pu"] & & \ar[ur, "pg"] \ar[dd,"w"]& \\
& \ar[rr, "pv" near start] & & {} \\
\ar[rr, "pv"] \ar[ur, equal] && \ar[ur, equal]
\end{tikzcd}\]
By \cite[Lemma 2.6]{DKLS}, $g$ is an equivalence because $f$ is and so is $pg$. Hence the right face is a special open box in the cubical 
quasi-category $Y$ and thus can be filled by step 1. Then the whole cube is filled because the critical edge associated to the back face is the identity.

For the last step, we resume the filling of the $3$-cube of the first step (with the same pictures as above) in $X$, but now in the general case. 
The aim is to fill in the front face of the $3$-cube in $X$  by a $2$-cube $A$ satisfying $pA=B$. 
Because $p$ is an inner fibration, its back face can be filled by a $2$-cube such that the  dashed arrow in the picture in the middle is sent to $\rho$ by $p$. 
Then the same is true for its right face, so that  the  dashed arrow in the picture on the right is sent to $w$ by $p$. Finally, the whole cube is filled and sent by 
$p$ to the $3$-cube in $Y$, and a fortiori its front face $A$ satisfies $pA=B$. \end{proof}

\begin{prop}\label{N_preserves_fib}
The functor $\Nbox$ sends fibrations between fibrant simplicial categories to fibrations between cubical quasi-categories.
\end{prop}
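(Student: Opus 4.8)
The plan is to verify that $\Nbox(p)$ is an inner fibration between cubical quasi-categories having the right lifting property against the two endpoint inclusions $j_0,j_1$, and then to invoke the characterization of fibrations between fibrant objects from Theorem \ref{Cset-Joyal}.

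The ``inner fibration'' part is pure adjunction. Since $\Cbox\dashv\Nbox$, the map $\Nbox(p)$ has the right lifting property against an inner open box inclusion $h^n_{i,\epsilon}$ if and only if $p$ does against $\Cbox(h^n_{i,\epsilon})$; by Proposition \ref{C_hn} the latter is an acyclic cofibration, and $p$, being a fibration, lifts against it. The same argument with $\EuScript{B}=*$ (resp.\ $\EuScript{A}=*$), using that $\EuScript{A}\to*$ and $\EuScript{B}\to*$ are fibrations, shows that $\Nbox(\EuScript{A})$ and $\Nbox(\EuScript{B})$ are cubical quasi-categories. Hence $\Nbox(p)$ is an inner fibration between cubical quasi-categories, so Lemmas \ref{lem_fib_technical2} and \ref{lem_fib_technical1} are available.

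The heart of the matter is lifting against $j_\epsilon:\{*\}\to K$. Consider a commuting square whose top map selects a vertex of $\Nbox(\EuScript{A})$ over the image of $\epsilon$, and whose bottom map is $\kappa:K\to\Nbox(\EuScript{B})$. Let $g:\square^1\to K$ be the middle edge; then $\kappa g$ factors through the middle-edge inclusion, hence is an equivalence of $\Nbox(\EuScript{B})$, so by Lemma \ref{lem_fib_technical2} its underlying arrow $\tilde g$ of $\EuScript{B}$ is sent by $\pi_0$ to an isomorphism of $\pi_0(\EuScript{B})$. Since $p$ is a Dwyer-Kan fibration, $\pi_0(p)$ is an isofibration and each $\EuScript{A}(x,y)\to\EuScript{B}(px,py)$ is a Kan fibration. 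I would first lift $\pi_0(\tilde g)$ along the isofibration (in the direction dictated by whether $\epsilon$ is the source or the target of $g$) to an isomorphism of $\pi_0(\EuScript{A})$ through the chosen vertex and realize it by some arrow of $\EuScript{A}$; its image in $\EuScript{B}$ then lies in the same component of the relevant mapping space as $\tilde g$, so lifting a connecting path along the Kan fibration on mapping spaces corrects it to an arrow $\hat g$ of $\EuScript{A}$ lying strictly over $\tilde g$, still a $\pi_0$-isomorphism, hence by Lemma \ref{lem_fib_technical2} an equivalence of $\Nbox(\EuScript{A})$.

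It remains to extend $\hat g$ to a lift $K\to\Nbox(\EuScript{A})$. Now $K$ is obtained from the middle edge $g$ by attaching its two nondegenerate $2$-cubes $B_1,B_2$; each $B_j$ has $g$ for one face, two degenerate faces, and one further nondegenerate edge as its remaining face. For each $j$, the edge $\hat g$ together with the forced images of the two degenerate faces of $B_j$ forms an open box $\sqcap^2_{i_j,\epsilon_j}\to\Nbox(\EuScript{A})$ missing exactly the face of $B_j$ carrying the new edge; every face of this box is an equivalence (it is $\hat g$ or an identity), so it is a special open box, and Lemma \ref{lem_fib_technical1} applied to it, the inclusion $\sqcap^2_{i_j,\epsilon_j}\hookrightarrow\square^2$, and $B_j:\square^2\to\Nbox(\EuScript{B})$ yields a lift $\widehat{B}_j:\square^2\to\Nbox(\EuScript{A})$ over $B_j$. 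The lifts $\widehat{B}_1,\widehat{B}_2$ agree on $\hat g$ and on the degenerate faces, hence glue to a lift $K\to\Nbox(\EuScript{A})$ over $\kappa$ extending the chosen vertex, which is the desired lift. By Theorem \ref{Cset-Joyal}, $\Nbox(p)$ is then a fibration between fibrant objects. The main obstacle is the third paragraph: producing $\hat g$ \emph{on the nose} over $\tilde g$ and with the prescribed endpoint forces one to use both halves of the Dwyer-Kan condition at once, mediated by Lemma \ref{lem_fib_technical2}; the subsequent $2$-cube filling is essentially bookkeeping around Lemma \ref{lem_fib_technical1}.
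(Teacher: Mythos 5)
Your proposal is correct and follows essentially the same route as the paper's proof: adjunction together with Proposition \ref{C_hn} for the inner-fibration part, then lifting the middle edge of $K$ via the isofibration condition on $\pi_0$, the Kan-complex structure of the target mapping space, and the Kan fibration on mapping spaces, and finally filling the two special open boxes of $K$ using Lemmas \ref{lem_fib_technical2} and \ref{lem_fib_technical1}. The only cosmetic difference is that the paper packages the last step as a single lifting problem against $K'\hookrightarrow K$ rather than gluing the two $2$-cube lifts by hand.
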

\begin{proof}
Using Proposition \ref{C_hn}, we conclude by adjunction
 that  $\Nbox(\EuScript{C})$ is a cubical quasi-category if $\EuScript{C}$ is a fibrant simplicial category, and  that if $f : \EuScript{C} \to \EuScript{D}$ 
 is a DK-fibration between fibrant simplicial categories, then $\Nbox(f)$ is an inner fibration between cubical quasi-categories. By Theorem 
 \ref{Cset-Joyal}, we are left to show that $\Nbox(f)$ has the right lifting property with respect to the endpoint inclusions $j_0:\{ 0 \} {\to} K$ and $j_1:\{ 1 \} {\to} K$. 
These cases being similar, we only treat the first one. Consider a commutative square:
\[\begin{tikzcd}
\{0\} \ar[r, "\bar{a}"] \ar[d, "j_0"] & \Nbox\EuScript{C} \ar[d, "\Nbox f"] \\
K \ar[r] & \Nbox\EuScript{D}
\end{tikzcd}\]

We shall first lift the middle vertical edge of $K$. By Lemma \ref{lem_fib_technical2}, its image in $\Nbox(\EuScript{D})$ 
corresponds to some arrow $v\in\EuScript{D}_0(a,b)$, which is an isomorphism in $\pi_0(\EuScript{D})$.  The same will have to be true for its 
image in $\Nbox(\EuScript{C})$ through the lifting.
The object $\bar{a}$ of $\EuScript{C}$  satisfies $f(\bar a)=a$.  We proceed as folllows.
        \begin{itemize}
            \item Since $\pi_0(f)$ is an isofibration of categories, there exists some $\bar{b}\in \Ob(\EuScript{C})$ and some 
            $v'\in\EuScript{C}_0(\bar{a},\bar{b})$ such that $f(\bar{b})=b$, $\pi_0(f(v')) = \pi_0(v)$, and $\pi_0(v')$ is an isomorphism in $\pi_0(\EuScript{C})$.
            \item Since $\EuScript{D}(a,b)$ is a Kan complex, we can find a 1-simplex $\delta\in\EuScript{D}_1(a,b)$ such that $\partial_1\delta = v$ and $\partial_0\delta = f(v')$.
            \item Since $f_{\bar{a},\bar{b}} : \EuScript{C}(\bar{a},\bar{b}) \to \EuScript{D}(a,b)$ is a Kan fibration, we can lift 
            $\delta\in\EuScript{D}_1(a,b)$ to some $\bar{\delta}\in\EuScript{C}_1(\bar{a},\bar{b})$ satisfying $\partial_0\bar{\delta} = v'$.
                  \end{itemize}
Then $\bar{v} = \partial_1\bar{\delta}\in\EuScript{C}_0(\bar{a},\bar{b})$ meets our goal, i.e., 
satisfies $f(\bar{v}) = v$, and $\pi_0(\bar{v}) = \pi_0(v')$ is an isomorphism in $\pi_0(\EuScript{C})$, so that,
                   $\bar{v}$ seen as an edge in $\Nbox(\EuScript{A})$ is an equivalence, by Lemma \ref{lem_fib_technical2}. Therefore, the two open boxes in
                  \[\xymatrix{1\ar@{=}[d]&0\ar[d]_{\bar{v}}\ar@{=}[r]&0\ar@{=}[d]\\
1\ar@{=}[r]&1&0}
\]
are special. Calling this diagram $\bar{v}$, our lifting problem reduces now to the following one:
\[\begin{tikzcd}
K' \ar[r, "\bar{v}"] \ar[d] & \Nbox\EuScript{C} \ar[d, "\Nbox f"] \\
K \ar[r] & \Nbox\EuScript{D}
\end{tikzcd}\]
where $K'$ is $K$ without its $2$-cubes and without its horizontal nondegenerate $1$-cubes.
This is performed by applying Lemma \ref{lem_fib_technical1} to each of the two special boxes above.
\end{proof}

\begin{prop}
The adjunction $\Cbox \dashv \Nbox$ is Quillen.
\end{prop}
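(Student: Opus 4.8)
The plan is to invoke Joyal's criterion, Proposition \ref{Joyal_result}: an adjunction $L\dashv R$ between model categories is Quillen as soon as $L$ preserves cofibrations and $R$ preserves fibrations between fibrant objects. Applied to $L=\Cbox$ and $R=\Nbox$, this reduces the statement to two facts that have already been proved in the preceding subsections, so the argument is purely a matter of assembling them.

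First I would recall that in the Joyal model structure on $c\Set$ the cofibrations are exactly the monomorphisms (Theorem \ref{Cset-Joyal}), so that Proposition \ref{C_preserves_cof} is precisely the assertion that $\Cbox$ preserves cofibrations. Second, I would match up the fibrant objects on the two sides: by Theorem \ref{Cset-Joyal} the fibrant objects of $c\Set$ for the Joyal structure are the cubical quasi-categories, and by the description of the Bergner structure recalled in Section \ref{S:simp_rig} the fibrant objects of $s\Cat$ are the simplicial categories all of whose mapping spaces are Kan complexes, i.e. the fibrant simplicial categories. With this dictionary, Proposition \ref{N_preserves_fib} is exactly the statement that $\Nbox$ sends fibrations between fibrant objects to fibrations between fibrant objects. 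Feeding Propositions \ref{C_preserves_cof} and \ref{N_preserves_fib} into Proposition \ref{Joyal_result} then yields that $\Cbox\dashv\Nbox$ is a Quillen adjunction.

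The only point that deserves care — and it is bookkeeping rather than a genuine obstacle — is this identification of ``fibration between fibrant objects'' in the hypotheses of Proposition \ref{Joyal_result} with the notions used in Propositions \ref{C_preserves_cof} and \ref{N_preserves_fib}; this is settled by Theorem \ref{Cset-Joyal} on the cubical side and by the definition of the Bergner structure on the simplicial side. No further computation is required.
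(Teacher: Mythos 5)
Your argument is exactly the paper's: apply Joyal's criterion (Proposition \ref{Joyal_result}) to Propositions \ref{C_preserves_cof} and \ref{N_preserves_fib}, with the identification of fibrant objects and fibrations between them supplied by Theorem \ref{Cset-Joyal} and the description of the Bergner structure. The proposal is correct and matches the paper's proof.
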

\begin{proof} This follows from Propositions \ref{C_preserves_cof} and  \ref{N_preserves_fib}, thanks to Joyal's characterisation recalled in Proposition \ref{Joyal_result}.
\end{proof}

\subsection{Quillen equivalence}
In order to prove that the Quillen adjunction $\Cbox \dashv \Nbox$ is a Quillen equivalence, we first compare it with the simplicial rigidification 
$ \C^{\Delta}$ using the functor $Q$ of Section \ref{S:Q} and then use the Quillen equivalences induced by $Q$ and $ \C^{\Delta}$.

\begin{lem}\label{CQn_nat_Deltan}
There exists a morphism  $\phi_n:\Cbox(Q^n) \to \C^{\Delta}(\Delta^n)$ in $s\Cat$, which is a bijection on objects and  is natural in $[n]\in\Delta$, \ie a natural 
transformation $\phi:\Cbox\circ Q\Rightarrow \C^{\Delta}\circ Y$.
\end{lem}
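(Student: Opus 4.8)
The plan is to take $\phi_n$ to be the identity on objects---which is legitimate since $\Ob(\Cbox(Q^n))=(Q^n)_0\cong\{0,\dots,n\}$ by Lemmas~\ref{L:cocontinuous} and~\ref{L:vertofQ}, while $\Ob(\C^\Delta(\Delta^n))=\{0,\dots,n\}$---and to build the maps on hom-spaces through the colimit formula of Theorem~\ref{T:Ct_with_neck}. Write $\C^\Delta(\Delta^n)=N(\mathcal C_n)$ for the poset-enriched category with $\mathcal C_n(i,j)=\mathcal P(]i,j[)$ and composition $(Y,X)\mapsto X\cup\{j\}\cup Y$. First I would construct a $Proset$-enriched functor $\Phi_n\colon\Cbox_{path}(Q^n)\to\mathcal C_n$, the identity on objects, and then, for each necklace $f\colon T\to(Q^n)_{i,j}$, form the composite
\[\Cbox_t(T)\xrightarrow{\ \cong\ }N\bigl(\Cbox_{path}(T)(\alpha,\omega)\bigr)\xrightarrow{N(\Cbox_{path}(f))}N\bigl(\Cbox_{path}(Q^n)(i,j)\bigr)\xrightarrow{N(\Phi_n)}N(\mathcal C_n)(i,j),\]
the first arrow being the isomorphism of Theorem~\ref{rigid_neck}. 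Functoriality of $\Cbox_{path}$ makes these maps compatible over $\slicecat{\Nec}{(Q^n)_{i,j}}$, so by Theorem~\ref{T:Ct_with_neck} they glue to $\phi_n(i,j)\colon\Cbox(Q^n)(i,j)\to\C^\Delta(\Delta^n)(i,j)$; equivalently, $\phi_n$ is $N(\Phi_n)$ composed with the canonical comparison $\Cbox\Rightarrow N\circ\Cbox_{path}$ coming from the left Kan extension that defines $\Cbox$.

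The crux is the construction of $\Phi_n$, which rests on the low-dimensional cells of $Q^n$. From the pushout of Definition~\ref{D:Q} (whose effect is to collapse each face $\partial_{l,1}$ of $\square^n$ along its first $l-1$ directions) together with Lemma~\ref{L:vertofQ}, one checks that $Q^n$ has exactly one nondegenerate $1$-cube from $i$ to $j$ whenever $0\le i<j\le n$ and none otherwise, and that every nondegenerate $2$-cube of $Q^n$ is $\pi_n\circ\iota^n_{a,b}$ for a $2$-dimensional face $\iota^n_{a,b}\colon\square^2\hookrightarrow\square^n$ (a $2$-cube of $Q^n$ lifts along the levelwise surjection $\pi_n$, and the lift factors through some $\iota^n_{a,b}$ since the only epimorphism $\square^2\to\square^2$ in $\square$ is the identity). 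Reducing away degenerate edges as in Lemma~\ref{unique-representative}, each class in $\Cbox_{path}(Q^n)(i,j)$ then has a unique representative, a strictly increasing chain $i=w_0<\dots<w_p=j$, so that $[\gamma]\mapsto\{w_1,\dots,w_{p-1}\}$ is a bijection $\Cbox_{path}(Q^n)(i,j)\xrightarrow{\ \sim\ }\mathcal P(]i,j[)$. Computing $\pi_n$ on the two boundary paths of $\iota^n_{a,b}$ shows that a generating step $\leadsto$ either is trivial or inserts a single new intermediate vertex; hence this bijection is a morphism of prosets from $\leadsto$ to $\subseteq$ (in fact an isomorphism of posets), and it is visibly compatible with concatenation of paths and with the rule $X\cup\{j\}\cup Y$, yielding $\Phi_n$. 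That $\phi_n$ is then a simplicial functor follows because the colimit of Theorem~\ref{T:Ct_with_neck} carries its composition through the monoidal structure of $(\cSdp,\vee,*)$, exactly as in the construction of the functor $E$, while $\Cbox_{path}$, $\Phi_n$ and $N$ all preserve composition; and $\phi_n$ is the identity on objects by construction.

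It then remains to check that $\phi$ is a natural transformation $\Cbox\circ Q\Rightarrow\C^\Delta\circ Y$. For $\theta\colon[m]\to[n]$ in $\Delta$, the functors $\Cbox(Q(\theta))$ and $\C^\Delta(Y(\theta))$ agree on objects, both acting as $\theta$ (Lemma~\ref{L:vertofQ}); on hom-spaces the colimit description reduces the claim to the naturality of $\Cbox_{path}$ together with the identity $\Phi_n\circ\Cbox_{path}(Q(\theta))=\C^\Delta(\theta)\circ\Phi_m$, and under the identifications above both composites send $W\subseteq{]a,b[}$ to $\theta(W)\setminus\{\theta a,\theta b\}$. The main obstacle is precisely the combinatorial analysis of $Q^n$ above: extracting from the pushout presentation that $Q^n$ has a single nondegenerate edge between any ordered pair of vertices and that the induced path preorder is exactly subset inclusion. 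By comparison, the compatibility with composition and the naturality in $[n]$ are formal, the only mildly delicate point in the latter being the explicit effect of $\C^\Delta$ on an arbitrary, possibly non-injective, simplicial operator.
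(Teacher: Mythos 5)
Your proof is correct, but it takes a genuinely different route from the paper's. The paper never analyses the cell structure of $Q^n$: it defines an explicit poset map $\tilde\psi_n(a,b):\Sigma_{b\setminus a}\to\mathcal P(]\sup a,\sup b[)$ on the cube itself (sending a permutation to its set of left-to-right maxima lying in $]\sup a,\sup b[$), checks by hand that this is monotone for the weak Bruhat order and compatible with concatenation, and then shows that the resulting enriched functor $\psi_n:\Cbox(\square^n)\to\C^\Delta(\Delta^n)$ descends along $\Cbox(\pi_n)$ by exhibiting, via cocontinuity of $\Cbox$ and the pushout presentation of $Q^n$, a lift $\gamma_{i,n}$ of $\psi_n\circ\Cbox(\partial_{i,1})$ through each collapsed summand. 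You instead work downstairs: you identify the nondegenerate $1$- and $2$-cubes of $Q^n$, deduce that $\Cbox_{path}(Q^n)(i,j)\cong\mathcal P(]i,j[)$ as posets, and obtain $\phi_n$ by composing $N(\Phi_n)$ with the canonical comparison $\Cbox\Rightarrow N\circ\Cbox_{path}$ coming from the Kan-extension definition of $\Cbox$ (correctly avoiding the trap of Example~\ref{(C-path-not-continuous}: you never claim this comparison is an isomorphism). The two constructions produce the same map --- the paper's $\tilde\psi_n$ is exactly your $\Phi_n\circ\Cbox_{path}(\pi_n)$, since the running maxima of a word are precisely the vertices visited by the image path under $\sup$ --- so the difference is where the work is placed: the paper trades the combinatorics of the quotient for an explicit formula plus a descent argument, while you trade the formula for a description of $Q^n$ itself, which as a bonus makes the naturality check in $[n]$ essentially transparent ($W\mapsto\theta(W)\setminus\{\theta a,\theta b\}$ on both sides). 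Two steps in your write-up are compressed and should be expanded if this were to be written out: (i) Lemma~\ref{unique-representative} is stated only for cubical subsets of $\square^n$, so the existence and uniqueness of the reduced representative in $\Cbox_{path}(Q^n)(i,j)$ needs its own (short) argument, resting on the fact that nondegenerate edges of $Q^n$ are determined by their endpoints and have strictly increasing endpoints; and (ii) the claim that every nondegenerate $2$-cube of $Q^n$ is some $\pi_n\circ\iota^n_{a,b}$ requires noting that $\pi_n$ is levelwise surjective (each collapsed summand $\square^{n-l}$ is itself a degreewise-split quotient of a face of $\square^n$) and then running the standard-form uniqueness of Proposition~\ref{P:NDfactorisation}. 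Neither is a gap, but both are at the level of detail the rest of your argument depends on.
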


\begin{proof}
We start by defining a family of morphisms $\psi_n:\Cbox(\square^n) \to \C^{\Delta}(\Delta^n)$ in $s\Cat$. 
On objects,  we set $\psi_n(a)=\sup a$ (cf. Lemma \ref{L:vertofQ}).
If $a\preccurlyeq b$ ($a\subseteq b$), then $\sup a\leq \sup b$, and we define a map  $\psi_n(a,b):\Cbox(\square^n)(a,b)\to \C^{\Delta}(\Delta^n)(\sup a,\sup b)$ in $s\Set$, 
as follows. Since $\Cbox(\square^n)(a,b)$ is the nerve of the poset $\Sigma_{b\setminus a}$ with the weak  order $\leadsto_B$ (see Section \ref{S:pathcatneck}), and  $\C^{\Delta}(\Delta^n)(\sup a,\sup b)$ is the 
nerve of  the poset $\mathcal P(]\sup a,\sup b[)$ with the inclusion order, we define this map at the level of the underlying posets.
Let $k=d(a,b)$ and $(x_1,\ldots,x_k)\in \Sigma_{b\setminus a}$. We set
\[
\tilde\psi_n(a,b)(x_1,\ldots,x_k)
=\{x_l\:| \;\forall p<l, x_p<x_l\} \; \cap \;]\sup a,\sup b[.\]

The map $\tilde\psi_n(a,b)$ is a morphism of posets. Assume  $x_r>x_{r+1}$ for some $r$. Then $x:=(x_1,\ldots,x_k)\leadsto_B  (x_1,\ldots,x_{r+1},x_r,\ldots x_k)=:y$.
We write $A(x)=\{x_l\:| \;\forall p<l, x_p<x_l\}$. Then we observe   that  $A(x)\setminus\{x_r,x_{r+1}\}=A(y)\setminus\{x_r,x_{r+1}\}$, that 
$x_{r+1}\not\in A(x)$, and that $(x_r\in A(x))\Rightarrow (x_r\in A(y))$. It follows that $A(x)\subseteq A(y)$, and hence $\tilde\psi_n(a,b)(x)\subseteq \tilde\psi_n(a,b)(y)$.

We next show that  $\tilde\psi_n$ preserves the concatenation product. Assume $a\preccurlyeq b\preccurlyeq c$. Let $x:=(x_1,\ldots,x_k)\in \Sigma_{b\setminus a}$, 
$y:=(y_1,\ldots, y_{k'})\in \Sigma_{c\setminus b}$. We set
$z:=(x_1,\ldots,x_k,y_1,\ldots,y_{k'})$.
We have to prove  that if $\sup b\not\in\{\sup a,\sup c\}$ (equivalently $\sup a<\sup b<\sup c$), then 
\[\tilde\psi_n(a,c)(z)=\tilde\psi_n(a,b)(x)\cup\{\sup b\}\cup \tilde\psi_n(b,c)(y).\]
We observe that $A(z)$  splits as $A(x)\cup B$, where $B\subseteq A(y)$ and $A(y) \,\cap\,]\sup b,\sup c]\subseteq B$. 
This settles the left-to-right inclusion, as well as the inclusions $\tilde\psi_n(a,b)(x)\subseteq \tilde\psi_n(a,c)(z)$ and  $\tilde\psi_n(b,c)(y)\subseteq \tilde\psi_n(a,c)(z)$.
Since $\sup b>\sup a$, we have $\sup(b\setminus a)=\sup b$. Thus there exists $l$ such that $x_l=\sup b$ and $(\forall p\in\{1,\ldots,k\}\; x_p\leq \sup b$), and  a fortiori
$\{\sup b\}\subseteq \tilde\psi_n(a,c)(z)$ holds.

In conclusion, setting $\psi_n=N(\tilde\psi_n)$, we have shown that
$\psi_n:\Cbox(\square^n) \to \C^{\Delta}(\Delta^n)$
is an enriched functor of simplicial categories. 

Let us prove that $\psi_n$ factors through the map $\Cbox(\pi_n):\Cbox(\square^n) \to \Cbox(Q^n)$, where $\pi_n$ is the quotient map of Definition \ref{D:Q}.  
As $\Cbox$ is cocontinuous, from Definition \ref{D:Q}, the following diagram is a pushout: 
\[\begin{tikzcd}[column sep=105pt]
\Cbox(\square^0\otimes\square^{n-1}) \bigsqcup \Cbox(\square^1\otimes\square^{n-2}) \bigsqcup \dots \bigsqcup \Cbox(\square^{n-1}\otimes\square^0) \arrow{d} 
\arrow{r}{(\Cbox(\partial_{1,1}), \Cbox(\partial_{2,1}), \dots, \Cbox(\partial_{n,1}))} & \Cbox(\square^n) \arrow[d,  "\Cbox(\pi_n)"''] \\
\Cbox(\square^{n-1}) \bigsqcup \Cbox(\square^{n-2}) \bigsqcup \dots \bigsqcup \Cbox(\square^0) \arrow{r} & \Cbox(Q^n)
\end{tikzcd}\]
So, by universality, all we need to get our factorisation is a commutative square
\[\begin{tikzcd}[column sep=110pt]
\Cbox(\square^0\otimes\square^{n-1}) \bigsqcup \Cbox(\square^1\otimes\square^{n-2}) \bigsqcup \dots \bigsqcup \Cbox(\square^{n-1}\otimes\square^0) \arrow{d} \arrow[dr, phantom]
\arrow{r}{(\Cbox(\partial_{1,1}), \Cbox(\partial_{2,1}), \dots, \Cbox(\partial_{n,1}))} & \Cbox(\square^n)\arrow[d, "\psi_n"] \\
\Cbox(\square^{n-1}) \bigsqcup \Cbox(\square^{n-2}) \bigsqcup \dots \bigsqcup \Cbox(\square^0) \arrow[r, dashed] & \C^{\Delta}(\Delta^n)
\end{tikzcd}\]
i.e., for all $1\leq i \leq n$, we want a lift  in the following diagram:
\[\begin{tikzcd}
\Cbox(\square^{i-1}\otimes\square^{n-i}) \arrow{r}{\Cbox(\partial_{i,1})} \arrow{d} & \Cbox(\square^n) \arrow{r}{\psi_n} & \C^{\Delta}(\Delta^n) \\
\Cbox(\square^{n-i}) \arrow[urr, "\gamma_{i,n}", dashed, swap]
\end{tikzcd}\]

We define first $\tilde\gamma_{i,n}$ as a map between the underlying posets.
Let $a\otimes a'$ be a vertex of $\square^{i-1}\otimes\square^{n-i}$.
We note that $(\psi_n\circ\partial_{i,1})(a\otimes a')=\sup(a\cup\{i\}\cup a')$ is independent of $a$ and we set $\tilde\gamma_{i,n}(a')=\sup(\{i\}\cup a')$.
Let $a\otimes a'$ and $b\otimes b'$ be two objects of $\square^{i-1}\otimes\square^{n-i}$,  with $a\preccurlyeq b$ and $a'\preccurlyeq b'$, and
 $(x_1,\ldots,x_k;y_1,\ldots,y_{k'})$  be an element of $\Sigma_{b\setminus a}\times\Sigma_{b'\setminus a'}$. 
 We have
 \begin{multline*}
 (\psi_n\circ\partial_{i,1})(a\otimes a';b\otimes b')(x_1,\ldots,x_k;y_1,\ldots, y_{k'})=\\
 \{y_l | \;\forall p<l, y_p<y_l\} \; \cap \;]\sup(\{i\}\cup a'),\sup(\{i\}\cup b')[,
 \end{multline*}
 so that $\tilde\gamma_{i,n}$ is a well defined morphism of posets, and hence  $\gamma_{i,n}=N(\tilde\gamma_{i,n})$ provides the required lifting.

As a consequence, there is a well defined morphism $\phi_n: \Cbox(Q^n)\to\C^{\Delta}(\Delta^n)$ in $s\Cat$ for each $[n]$, which is a bijection on objects. 
It remains to show that it yields a natural transformation $\phi:\Cbox\circ Q\Rightarrow \C^{\Delta}\circ Y$. Namely, given $u:[n]\rightarrow [m]$ in $\Delta$ and denoting 
the induced map by $u_*:Q^n\rightarrow Q^m$, we have to prove the commutativity of the diagram:
\[\begin{tikzcd}
\Cbox(Q^{n}) \arrow[d,"\phi_{n}", swap] \arrow{rr}{\Cbox(u_*)}& & \Cbox(Q^m)  \arrow{d}{\phi_m} \\
\C^{\Delta}(\Delta^{n}) \arrow[rr,"\C^{\Delta}(u)"] && \C^{\Delta}(\Delta^m)
\end{tikzcd}\]
Lemma \ref{L:vertofQ} implies that it is commutative at the level of objects.
It is enough to check it  for $u$  a face $d_j$ or a degeneracy $s_j$
(left to the reader).
\end{proof}

\begin{rmk}
Note that $\C^{\Delta}\circ Y \ncong \Cbox \circ Q$. 
\end{rmk}

\begin{prop}\label{comparaison_simpliciale}
The natural transformation of Lemma \ref{CQn_nat_Deltan} induces a natural DK-equivalence  $\phi: \Cbox \circ Q \Longrightarrow \C^{\Delta}$.
\end{prop}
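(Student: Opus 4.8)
The plan is to exploit that both $\Cbox\circ Q$ and $\C^{\Delta}$ are cocontinuous left Quillen functors $s\Set\to s\Cat$ landing in cofibrant simplicial categories, which reduces the statement to the representables $\Delta^n$, where it becomes the computation of a mapping space. Since $\Cbox\circ Q$ and $\C^{\Delta}$ are cocontinuous, each is the left Kan extension along $Y:\Delta\to s\Set$ of its restriction to $\Delta$; hence the natural transformation between these restrictions produced in Lemma \ref{CQn_nat_Deltan} extends uniquely to a natural transformation $\phi:\Cbox\circ Q\Rightarrow\C^{\Delta}$, which by Lemma \ref{L:cocontinuous} and Remark \ref{R:objets} is the identity on objects (both object sets being $S_0$) at every $S$. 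It remains to see that each $\phi_S$ is a DK-equivalence. Both functors are left Quillen from the Joyal structure on $s\Set$ to the Bergner structure on $s\Cat$: for $\C^{\Delta}$ by Theorem \ref{sSet-sCat-quillen-equivalence}, and for $\Cbox\circ Q$ by composing the Quillen equivalence of Theorem \ref{Q_Quillen} with the Quillen adjunction $\Cbox\dashv\Nbox$. As every object of $s\Set$ and of $c\Set$ is cofibrant and both functors preserve cofibrations, both send every simplicial set to a cofibrant simplicial category, so by Ken Brown's lemma both preserve weak equivalences. Both also preserve cofibrations and colimits, hence they carry the skeletal filtration $S=\colim_m\mathrm{sk}_m S$ — a transfinite composition of pushouts of coproducts of boundary inclusions $\partial\Delta^m\hookrightarrow\Delta^m$ — to homotopy colimit diagrams in $s\Cat$, since a pushout along a cofibration (resp. a sequential composition of cofibrations) between cofibrant objects is a homotopy pushout (resp. homotopy colimit).

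Next I argue by induction on the dimension of $S$. In dimension $0$, $S$ is discrete, both functors output the discrete simplicial category on $S_0$, and $\phi_S$ is an identity. For the inductive step, $\mathrm{sk}_m S$ is the pushout of $\coprod\Delta^m\hookleftarrow\coprod\partial\Delta^m\to\mathrm{sk}_{m-1}S$; applying either functor gives a homotopy pushout square, so by the gluing lemma $\phi_{\mathrm{sk}_m S}$ is a DK-equivalence provided $\phi$ is one on $\coprod\partial\Delta^m$, on $\coprod\Delta^m$, and on $\mathrm{sk}_{m-1}S$. As both functors and the class of DK-equivalences are compatible with coproducts, and as $\partial\Delta^m$ and $\mathrm{sk}_{m-1}S$ have dimension $<m$, this reduces by induction to the single simplex $\Delta^m$; passing to the sequential homotopy colimit then yields the claim for $S$.

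This leaves the base case: $\phi_n:\Cbox(Q^n)\to\C^{\Delta}(\Delta^n)$ is a DK-equivalence. By Lemma \ref{CQn_nat_Deltan} it is a bijection on objects $\{0,\dots,n\}$, so it suffices to show each $\phi_n(i,j):\Cbox(Q^n)(i,j)\to\C^{\Delta}(\Delta^n)(i,j)$ is a weak equivalence of simplicial sets. The target is $N(\mathcal P(]i,j[))$: empty if $i>j$, and contractible otherwise since $\mathcal P(]i,j[)$ has a least element. For the source, Theorem \ref{T:Ct_with_neck} gives $\Cbox(Q^n)(i,j)\cong\colim_{\mathcal J}\Cbox_t(T)$ with $\mathcal J=\slicecat{\Nec}{(Q^n)_{i,j}}$; since $\pi_n$ sends $a$ to $\sup a$, no nondegenerate edge of $Q^n$ decreases the index, so $\mathcal J$ is empty when $i>j$ and the colimit is empty, matching the target. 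For $i\le j$ one proves this colimit is contractible, following the pattern of Proposition \ref{C_hn}: each $\Cbox_t(T)$ is contractible (Corollary \ref{Ct_nacklaces_contractible}), $\mathcal J$ is direct (hence Reedy) and the diagram $T\mapsto\Cbox_t(T)$ is Reedy cofibrant because its latching maps are monomorphisms (cf. Lemma \ref{lem_subneck}), whence $\colim_{\mathcal J}\Cbox_t(T)\simeq\hocolim_{\mathcal J}\Cbox_t(T)\simeq\hocolim_{\mathcal J}*\simeq N(\mathcal J)$, and $N(\mathcal J)$ is weakly contractible. A map between weakly contractible simplicial sets is a weak equivalence, so $\phi_n(i,j)$ is one, and the proof is complete.

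The main obstacle is exactly this last step. Everything preceding it is formal manipulation of left Quillen functors and homotopy colimits; the genuine content is the contractibility of $\Cbox(Q^n)(i,j)$. Since $Q^n$ is a quotient of a cube rather than a cubical subset, Proposition \ref{Ct_with_subneck} does not apply directly, so one must verify by hand that $\slicecat{\Nec}{(Q^n)_{i,j}}$ is direct with weakly contractible nerve — for instance by analysing the nondegenerate cubes of $Q^n$ via Lemma \ref{L:vertofQ} and the pushout of Definition \ref{D:Q} — before the homotopy-colimit comparison can be invoked.
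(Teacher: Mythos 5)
Your overall skeleton (reduce to representables by cocontinuity, skeletal induction via the cube lemma for pushouts of cofibrations between cofibrant objects, then pass to the sequential colimit of skeleta) is the same as the paper's, which follows \cite[Proposition 6.21]{DKLS} and invokes \cite[Propositions 15.10.10 and 15.10.12]{H}. The difference — and the problem — is the base case $\phi_n:\Cbox(Q^n)\to\C^{\Delta}(\Delta^n)$. You propose to prove it by showing directly that $\Cbox(Q^n)(i,j)$ is contractible for $i\leq j$ via the necklace colimit of Theorem \ref{T:Ct_with_neck}, "following the pattern of Proposition \ref{C_hn}". But that pattern is not available here, as you yourself note at the end: Proposition \ref{Ct_with_subneck} and the Reedy/direct-category argument require replacing $\slicecat{\Nec}{S_{a,b}}$ by the poset $\SubNeck(S_{a,b})$ of monomorphisms, and this replacement (Lemma \ref{L:TNDmono}, Proposition \ref{P:subneckposet}, Proposition \ref{C:nerf_subneck}) is proved only for cubical subsets of a cube. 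The cubical set $Q^n$ is a quotient of $\square^n$, not a subcomplex, so none of that machinery applies; moreover the full slice $\slicecat{\Nec}{(Q^n)_{i,j}}$ contains epimorphisms (degeneracies of necklaces) and so is certainly not a direct category. Your closing paragraph concedes that this verification "must be done by hand" but does not do it, so the genuine content of the proposition is left unproved.

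The paper sidesteps this computation entirely with a two-out-of-three trick: by the time this proposition is stated, both $\Cbox\circ Q$ and $\C^{\Delta}$ are known to be left Quillen from the Joyal structure, hence (every simplicial set being cofibrant) preserve all Joyal weak equivalences. The inner horn inclusion $\Lambda^n_i\to\Delta^n$ ($1\leq i\leq n-1$) is a Joyal weak equivalence and $\Lambda^n_i$ is $(n-1)$-skeletal, so $\phi_{\Lambda^n_i}$ is a DK-equivalence by the induction hypothesis, and the commuting square with the two horizontal weak equivalences $\Cbox Q(\Lambda^n_i)\to\Cbox Q(\Delta^n)$ and $\C^{\Delta}(\Lambda^n_i)\to\C^{\Delta}(\Delta^n)$ forces $\phi_{\Delta^n}$ to be a DK-equivalence. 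This is the missing idea in your write-up; with it, no analysis of $\slicecat{\Nec}{(Q^n)_{i,j}}$ is needed. If you prefer your direct route, you would first have to extend the $\SubNeck$ formalism (or some substitute) to the quotients $Q^n$, which is a nontrivial piece of work not contained in the paper.
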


\begin{proof}
We follow closely the proof of \cite[Proposition 6.21]{DKLS}.
Every simplicial set $S$ is a colimit of representables, and the functors $\Cbox\circ Q$ and $\C^{\Delta}$ are left adjoint, hence 
preserve colimits.  It follows that we can upgrade the  natural transformation of Lemma \ref{CQn_nat_Deltan} as $\phi:\Cbox \circ Q \Rightarrow 
\C^{\Delta}$, which is componentwise a bijection on objects by Lemma \ref{L:cocontinuous}. 
We prove first that if $S$ is $k$-skeletal for 
some $k$, then $\phi_S$ is a DK-equivalence. If $k=0$ and $k=1$, this is an isomorphism of simplicial categories. Assume it is true for every 
$k<n$. The functors $\Cbox\circ Q$ and $\C^\Delta$ preserve cofibrations, as well as Joyal weak equivalences (since every object of $s\Set$ is 
cofibrant). Given any $1\leq i\leq n-1$, the cofibration $\Lambda_i^n\rightarrow \Delta^n$ is a Joyal weak equivalence of simplicial sets and $
\Lambda_i^n$ is $(n-1)$-skeletal, so that $\phi_{\Delta^n}$ is a DK-equivalence in $s\Cat$, by the 2 out of 3 property. Given  an 
$(n-1)$-skeletal simplicial set $X$, let us consider the pushout diagram 
$\mathcal X$:

\[\xymatrix{\partial\Delta^n\ar[r]\ar[d] & X\ar[d]\\
\Delta^n\ar[r]& X'}\]
The  left vertical arrow is a cofibration. Applying our functors, we get a cube diagram, where the front face is $(\Cbox\circ Q)(\mathcal X)$, the back 
face is $\C^{\Delta}(\mathcal X)$. Both are pushout diagrams, and their left vertical arrow is a cofibration.  By induction hypothesis and by the proof 
above, the morphisms $\phi_{\partial\Delta^n}$, $\phi_{\Delta^n}$ and $\phi_X$ are DK-equivalences. We can thus
apply \cite[Proposition 15.10.10]{H} and conclude that $\phi_{X'}$ is a DK-equivalence. This completes the proof of our claim (making repeated use 
of such pushouts).
Finally, we observe that any simplicial set $S$ is a sequential colimit of cofibrations (the family of  inclusions of the $n$-
skeleton into the $(n+1)$-skeleton), preserved by the two functors and thus entailing that $\phi_S$ is a DK-equivalence, by \cite[Proposition 
15.10.12]{H}. 
\end{proof}

\begin{cor}
The adjunction $\Cbox \dashv \Nbox$ is a Quillen equivalence.
\end{cor}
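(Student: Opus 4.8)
The plan is to deduce the statement purely formally from the two Quillen equivalences already available, namely $Q : s\Set \rightleftarrows c\Set : {\int}$ of Theorem \ref{Q_Quillen} and the simplicial rigidification $\C^{\Delta} : s\Set \to s\Cat$ of Theorem \ref{sSet-sCat-quillen-equivalence}, together with the comparison $\phi : \Cbox \circ Q \Rightarrow \C^{\Delta}$ of Proposition \ref{comparaison_simpliciale}. First I would note that composing the Quillen adjunction $Q \dashv {\int}$ with the Quillen adjunction $\Cbox \dashv \Nbox$ (established just above) produces a Quillen adjunction
\[ \Cbox \circ Q \;\dashv\; {\int}\circ\Nbox \]
between the Joyal model structure on $s\Set$ and the Bergner model structure on $s\Cat$.

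The substance of the argument is to check that this composite is in fact a Quillen equivalence. Since in the Joyal model structure every object of $s\Set$ is cofibrant, both $\Cbox\circ Q$ and $\C^{\Delta}$, being left Quillen functors, preserve all weak equivalences, and hence descend directly to functors $\Ho(s\Set)\to\Ho(s\Cat)$ computing the respective total left derived functors. The natural transformation $\phi$ of Proposition \ref{comparaison_simpliciale} is componentwise a DK-equivalence, that is, a weak equivalence of the Bergner structure, so it induces a natural isomorphism between these two derived functors. By Theorem \ref{sSet-sCat-quillen-equivalence} the total left derived functor of $\C^{\Delta}$ is an equivalence of homotopy categories; therefore so is that of $\Cbox\circ Q$, and $\Cbox\circ Q \dashv {\int}\circ\Nbox$ is a Quillen equivalence.

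Finally I would invoke the two-out-of-three property for Quillen equivalences applied to the composable pair $Q\dashv{\int}$ and $\Cbox\dashv\Nbox$: since $Q\dashv{\int}$ (Theorem \ref{Q_Quillen}) and the composite $\Cbox\circ Q \dashv {\int}\circ\Nbox$ are both Quillen equivalences, it follows that $\Cbox\dashv\Nbox$ is a Quillen equivalence as well. Concretely, on homotopy categories $\mathbb{L}(\Cbox\circ Q)\cong\mathbb{L}\Cbox\circ\mathbb{L}Q$, and the first two being equivalences forces $\mathbb{L}\Cbox$ to be one. Within this corollary there is no genuine obstacle: all the real work has been done in Proposition \ref{comparaison_simpliciale} and the auxiliary results feeding it, and the only point requiring a little care is the formal one that a natural weak equivalence between left Quillen functors on a model category with all objects cofibrant transports the property of being the left adjoint of a Quillen equivalence from one to the other.
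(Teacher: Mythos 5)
Your argument is correct and follows the same route as the paper's own proof: use the natural DK-equivalence $\phi$ to identify $\mathbb{L}(\Cbox\circ Q)$ with $\mathbb{L}(\C^{\Delta})$, conclude that $\Cbox\circ Q$ is a Quillen equivalence, and then apply the two-out-of-three property together with Theorem \ref{Q_Quillen}. You merely spell out in more detail the (correct) justification that cofibrancy of all objects in the Joyal structure lets the pointwise weak equivalence $\phi$ induce an isomorphism of total left derived functors.
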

\begin{proof}
The proposition above implies that the total left derived functor $\mathbb{L}(\Cbox\circ Q)$ is isomorphic to $\mathbb{L}(\C^{\Delta})$. But $\C^{\Delta}$ is a 
Quillen equivalence (Theorem \ref{sSet-sCat-quillen-equivalence}), hence $\Cbox\circ Q$ is also a Quillen equivalence. We conclude  by Theorem \ref{Q_Quillen} 
and  the 2 out of 3 property for Quillen equivalences.
\end{proof}

\appendix

\section{Tools in category theory}  \label{category-appendix}

In this section, we collect some categorical and enriched categorical  tools  that are needed in the paper.

\subsection{Wedge sum and concatenation}\label{S:wedge}

Let $\EuScript C$ be a category with finite colimits and terminal object $*$. Let $X$ be an object of $\EuScript C$. A point $x$ in $X$ is a map 
$x:*\rightarrow X$. We say also that $X$ is pointed by $x$.
If $x$ is a point in $X$ and $y$ is a point in $Y$, we write $X\vee Y$ for the pushout of the diagram
\[\xymatrix{X& {*}\ar[r]^-{y}\ar[l]_-{x} & Y}.\]
We will consider a similar construction in  $\EuScript C_{*,*}=\slicecat{*\sqcup *}{\EuScript C}$, the category of double pointed objects in $\EuScript C$. We will 
denote an object  $(a,b):*\sqcup *\rightarrow X$ in this category by $X_{a,b}$.

\begin{defn}\label{D:concat} Let $X_{a,b}$ and $Y_{u,v}$ be two objects of $\EuScript C_{*,*}$.
\begin{itemize}
\item  the wedge sum $X\vee Y$ of the pointed sets 
$b:*\rightarrow X$ and $u:*\rightarrow Y$ is naturally  double pointed by $(a,v):*\sqcup *\rightarrow X\vee Y$,
\item  for  $f: X_{a,b}\rightarrow X'_{a',b'}$ 
and $g:Y_{u,v}\rightarrow Y'_{u',v'}$, we denote by $f\vee g: (X\vee Y)_{a,v}\rightarrow (X'\vee Y')_{a',v'}$ the double pointed map induced by 
the universal property of the pushout  and the natural maps $X'\rightarrow X'\vee Y'$ and $Y'\rightarrow X'\vee Y'$. It endows 
$\EuScript C_{*,*}$ with a monoidal  structure, with unit $*$ (doubled pointed by itself). We call this product the {\sl concatenation product}.
\end{itemize}
Let $X$ be an object in $\EuScript C$ and let  $u,v,w$ be points in $X$. 
For any maps $f:S_{a,b}\rightarrow X_{u,v}$ and $g:T_{a',b'}
\rightarrow X_{v,w}$  in $\EuScript C_{*,*}$, we write  $f*g: (S\vee T)_{a,b'}\rightarrow X_{u,w}$ for the corresponding structure map out of 
the pushout.
\end{defn}

\subsection{$P$-shaped categories}\label{S:Pshapedcat}
We introduce the notion of $P$-shaped category, for $P$ a poset.

\begin{defn}\label{bounded-poset} A {\it bounded poset} is a poset $P$  having a least and greatest element denoted respectively by $\alpha_P$ and $\omega_P$.
\end{defn}

\begin{rmk}\label{R:veeof posets} Given two bounded posets $P,Q$,  the poset  $P\vee Q$, where $\omega_P$ and $\alpha_Q$ (see Definition \ref{D:concat}) 
are identified,  is also a bounded poset, bounded by $\alpha_P$ and $\omega_Q$. 
Note that $P\rightarrow P\vee Q$ is an embedding of posets.
\end{rmk}

In this section we fix  $(\mathcal V,\otimes,I)$ a symmetric monoidal category, with initial object denoted by $\emptyset$.

\begin{defn}  Let $P$ be a 
poset. A $\mathcal V$-enriched category $\EuScript C$ is {\sl $P$-shaped} if the following conditions are fulfilled:
\begin{itemize}
\item the set of objects of $\EuScript C$ is in bijection with  $P$, 
\item $ \forall p\in P,\; \EuScript C(p,p)=I$,
\item $\forall p,q\in P,\; \EuScript C(p,q)\not=\emptyset\Rightarrow p\leq q$.
\end{itemize}
\end{defn}

\begin{ex} Any poset $P$ gives rise to a $\mathcal V$-enriched category $\hat P$: the objects are the elements of $P$, and for every $p,q$ in $P$ we have
$\hat P(p,q)=I$, if $p\leq q$ and $\hat P(p,q)=\emptyset$, otherwise. Hence $\hat P$ is $P$-shaped.
\end{ex}

\begin{prop}\label{P:V-concat} Let $P$ and $Q$ be two bounded posets. Let $\EuScript C$ be a $P$-shaped $\mathcal V$-category and 
$\EuScript D$ be a $Q$-shaped  $\mathcal V$-category. The following data yield a 
$\mathcal V$-enriched $(P\!\vee\! Q)$-shaped category
$\EuScript E=\EuScript C\vee\EuScript D$, called the concatenation category of $\EuScript C$ and $\EuScript D$:
\begin{itemize}
\item the set  of objects of $\EuScript E$ is in bijection with $P\vee Q$, so that we can identify objects of $\EuScript E$ with elements of $P\vee Q$,
\item $\EuScript E(x,y)=\begin{cases} \EuScript C(x,y),&\text{ if } x,y\in P,\\
\EuScript D(x,y),& \text{ if } x,y\in Q,\\
\EuScript C(x,\omega_P)\otimes \EuScript D(\alpha_Q,y),&\text{ if } x\in P\text{ and } y\in Q\\
\emptyset, & else,\end{cases}$
\end{itemize}
and the composition is the obvious one. 
\end{prop}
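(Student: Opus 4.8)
The plan is a direct verification that the listed data assemble into a $\mathcal V$-enriched category; all of the content is a finite case analysis, with the symmetric monoidal coherence of $\mathcal V$ handling the bookkeeping and the axioms already valid in $\mathcal C$ and $\mathcal D$ supplying the substance. First I would fix the harmless ambiguity at the gluing vertex $\omega_P=\alpha_Q$, which lies in both $P$ and $Q$: one declares $\mathcal E(x,\omega_P):=\mathcal C(x,\omega_P)$ for $x\in P$ and $\mathcal E(\omega_P,y):=\mathcal D(\alpha_Q,y)$ for $y\in Q$, and then the ``mixed'' clause $\mathcal E(x,y)=\mathcal C(x,\omega_P)\otimes\mathcal D(\alpha_Q,y)$ is consistent with this up to the unit isomorphisms $I\otimes(-)\cong(-)\cong(-)\otimes I$, because $\mathcal C(\omega_P,\omega_P)=I=\mathcal D(\alpha_Q,\alpha_Q)$. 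The identity $\mathrm{id}_x\colon I\to\mathcal E(x,x)$ is just $\mathrm{id}_I$, as $\mathcal E(x,x)=I$ in every case.

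Next I would spell out the composition $\mathcal E(y,z)\otimes\mathcal E(x,y)\to\mathcal E(x,z)$ by cases on how the chain $x\le y\le z$ sits relative to the ``seam'' separating the $P$-part of $P\vee Q$ from its $Q$-part. If $x,y,z$ all lie in $P$ (resp.\ all in $Q$) one uses the composition of $\mathcal C$ (resp.\ of $\mathcal D$). If $x,y\in P$ and $z\in Q$, then $\mathcal E(y,z)\otimes\mathcal E(x,y)=\bigl(\mathcal C(y,\omega_P)\otimes\mathcal D(\alpha_Q,z)\bigr)\otimes\mathcal C(x,y)$; I would use the associativity and symmetry constraints of $\mathcal V$ to move the factor $\mathcal D(\alpha_Q,z)$ aside and bring the two $\mathcal C$-factors adjacent, then compose in $\mathcal C$, landing in $\mathcal C(x,\omega_P)\otimes\mathcal D(\alpha_Q,z)=\mathcal E(x,z)$. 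Dually, for $x\in P$ and $y,z\in Q$ one reassociates $\mathcal D(y,z)\otimes\bigl(\mathcal C(x,\omega_P)\otimes\mathcal D(\alpha_Q,y)\bigr)$, slides $\mathcal D(y,z)$ past $\mathcal C(x,\omega_P)$ via the symmetry, and composes the two $\mathcal D$-factors. In every remaining case $\mathcal E(x,y)=\emptyset$ or $\mathcal E(y,z)=\emptyset$ and also $\mathcal E(x,z)=\emptyset$, so the composition is the canonical map; here one uses that $\otimes$ preserves the initial object, hence $\emptyset\otimes A\cong A\otimes\emptyset\cong\emptyset$, which holds in all of our applications, where $\mathcal V$ is cartesian.

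Then I would check the unit and associativity axioms. Unitality follows immediately from the triangle identities in $\mathcal V$ together with unitality of composition in $\mathcal C$ and in $\mathcal D$. For associativity one runs through a chain $x\le y\le z\le w$, classified by where (if anywhere) it crosses the seam: in each case both bracketings of $\mathcal E(z,w)\otimes\mathcal E(y,z)\otimes\mathcal E(x,y)\to\mathcal E(x,w)$ rewrite, using that the isomorphisms built from the associativity and symmetry constraints of $\mathcal V$ are canonical by Mac Lane's coherence theorem \cite{MacLane98}, to one and the same instance of the associativity of composition in $\mathcal C$ or in $\mathcal D$, which holds by hypothesis. Finally, that $\mathcal E$ is $(P\vee Q)$-shaped is read straight off the construction: its objects biject with $P\vee Q$, one has $\mathcal E(x,x)=I$, and $\mathcal E(x,y)\ne\emptyset$ forces $x\le y$ in $P\vee Q$ since it does so inside $P$ and inside $Q$ (the inclusions $P\hookrightarrow P\vee Q$ and $Q\hookrightarrow P\vee Q$ being order embeddings, cf.\ Remark~\ref{R:veeof posets}), whereas the mixed clause applies only when $x\in P$ and $y\in Q$, where $x\le\omega_P=\alpha_Q\le y$ automatically, and the last clause gives the empty hom.

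There is no real obstacle in this proof: once the cases are organised it is entirely routine, and the only places calling for a little care are the conventions at the gluing vertex and the behaviour of $\otimes$ on empty hom-objects, both handled above. The ``hardest'' step is the purely clerical check of associativity for chains crossing the seam, which Mac Lane coherence reduces to the associativity already available in $\mathcal C$ and $\mathcal D$.
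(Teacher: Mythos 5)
Your argument is a direct, case-by-case verification of the enriched-category axioms, whereas the paper takes a genuinely different route: after a one-line consistency remark (using $\EuScript C(\omega_P,\omega_P)=I=\EuScript D(\alpha_Q,\alpha_Q)$), it proves that the explicitly described $\EuScript E$ satisfies the universal property of the pushout of $\EuScript C \leftarrow * \rightarrow \EuScript D$ in the category of $\mathcal V$-categories, where $*$ is the one-object $\mathcal V$-category with hom $I$, mapped to $\omega_P$ and $\alpha_Q$ respectively; the induced functor $H$ out of a compatible pair $(F,G)$ is forced on objects and on all homs except the mixed ones $\EuScript C(x,\omega_P)\otimes\EuScript D(\alpha_Q,y)$, where it must be $F\otimes G$ followed by composition in the target. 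This is not a stylistic difference: by Definition \ref{D:concat} the symbol $\vee$ denotes the wedge (a pushout), so the equality $\EuScript E=\EuScript C\vee\EuScript D$ is itself an assertion, and it is precisely this identification that is used downstream — in Corollary \ref{C:nerveconcat}, where $N(\EuScript C)\vee N(\EuScript D)$ (a pushout of simplicial categories) is compared with $N(\EuScript C\vee\EuScript D)$, and in the construction of the composition on $E(S)$. Your verification correctly establishes that the data form a $(P\vee Q)$-shaped $\mathcal V$-category, and is in fact more scrupulous than the paper about the conventions at the seam, the use of the symmetry of $\mathcal V$ in the mixed compositions, and the need for $\otimes$ to preserve the initial object in each variable (true in the cartesian closed $\mathcal V$ used here, but worth flagging as you do). What it buys is an honest check that "the obvious composition" really works; what it misses is the universal property, which is the part of the proposition the rest of the paper actually consumes. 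You should append that short pushout verification to make the proof complete for its intended use.
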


\begin{proof} Note that $\EuScript C(\omega_P,\omega_P)=I=\EuScript D(\alpha_Q,\alpha_Q)$ implies that the definition above is consistent.
Note also that $\EuScript E$ is $P\vee Q$-shaped. We prove that $\EuScript E$ satisfies the required universal property in the category of 
$\mathcal V$-categories, taking $*$ to be the $\mathcal V$-category with one object, with homset $I$.
Denote by $\iota_{\EuScript C}:\EuScript C\rightarrow\EuScript E$ the natural morphism and similarly for $\iota_{\EuScript D}$.
Given a $\mathcal V$-category $\EuScript F$, $F:\EuScript C\rightarrow\EuScript F$ and two $\mathcal V$-functors $G:\EuScript D\rightarrow\EuScript F$ 
 satisfying $F(\omega_P)=G(\alpha_Q),$ we prove that there exists a unique $\mathcal V$-functor $H:
\EuScript E\rightarrow \EuScript F$ such that $H\iota_{\EuScript C}=F,\ H\iota_{\EuScript D}=G$. The functor $H$ is clearly uniquely defined on objects, and on 
most of the morphisms. For $ x\in P\setminus\{\omega_P\} \text{ and } y\in Q\setminus\{\alpha_Q\}$, we (have to) define $H$ as the composite
\[\xymatrix{
\EuScript C(x,\omega_P)\otimes \EuScript D(\alpha_Q,y)\ar[r]^-{F\otimes G}&\EuScript F(F(x),F(\omega_P))\otimes
\EuScript F(G(\alpha_Q),G(y))\ar[r]^-{\circ} &\EuScript F(F(x),G(y)),}\]
and we check easily that this defines an enriched functor.
\end{proof}

\begin{cor}\label{C:nerveconcat} Let $P$ and $Q$ be two bounded posets. If $\EuScript C$ is a $P$-shaped poset-category and $\EuScript D$ is a $Q$-shaped poset-category,
then $N(\EuScript C)\vee N(\EuScript D)$ is isomorphic to $N(\EuScript C\vee\EuScript D)$ as simplicial categories.
\end{cor}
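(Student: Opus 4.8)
The plan is to deduce Corollary \ref{C:nerveconcat} directly from Proposition \ref{P:V-concat} by specialising the symmetric monoidal category $(\mathcal V, \otimes, I)$, together with a compatibility of the nerve functor with the concatenation construction. The key observation is that concatenation of $P$-shaped categories is defined by the same formula in any symmetric monoidal base, and that the nerve functor $N : \Cat \to s\Set$ is monoidal (as recalled just before Remark \ref{R:objets}), hence sends the concatenation of poset-categories to the concatenation of the corresponding simplicial categories.

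More concretely, first I would record that a poset-category (a category enriched in $(Poset, \times, *)$) has a well-behaved nerve: applying $N$ to each hom-poset yields a simplicial category, and since $N$ preserves finite products (indeed all limits, being a right adjoint), this assignment is functorial and monoidal. Next, given $\EuScript C$ a $P$-shaped poset-category and $\EuScript D$ a $Q$-shaped poset-category, Proposition \ref{P:V-concat} applied with $\mathcal V = Poset$ produces the $(P \vee Q)$-shaped poset-category $\EuScript C \vee \EuScript D$, whose mixed hom-objects are the products $\EuScript C(x, \omega_P) \times \EuScript D(\alpha_Q, y)$. Applying $N$ object-wise and using $N(X \times Y) \cong N(X) \times N(Y)$, I obtain a $(P\vee Q)$-shaped simplicial category whose hom-spaces are exactly those of $N(\EuScript C) \vee N(\EuScript D)$ — the latter computed by Proposition \ref{P:V-concat} again, now with $\mathcal V = s\Set$. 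One then checks that the composition maps agree under this identification, which is immediate since both are induced by the universal composition of Proposition \ref{P:V-concat} and $N$ sends the relevant composition squares to composition squares. Finally, since both $N(\EuScript C) \vee N(\EuScript D)$ and $N(\EuScript C \vee \EuScript D)$ satisfy the same universal property in $s\Cat$ (that of the pushout/concatenation of the bipointed simplicial categories $N(\EuScript C)$ and $N(\EuScript D)$ along $N(\omega_P) = N(\alpha_Q)$), the canonical comparison morphism is an isomorphism.

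Alternatively, and perhaps more cleanly, I would argue purely by universal properties: the concatenation $\EuScript A \vee \EuScript B$ of bipointed $\mathcal V$-categories is the pushout of $\EuScript A \leftarrow * \rightarrow \EuScript B$ in $\mathcal V\text{-}\Cat$, and the ``nerve'' functor on enriched categories induced by the monoidal functor $N : Poset \to s\Set$ is a left adjoint on the level of bipointed categories (or at least preserves the relevant pushout, since the pushouts involved are along the inclusion of a single object, which $N$ handles on the nose). Then $N(\EuScript C \vee \EuScript D) = N(\text{pushout}) \cong \text{pushout of } N(\EuScript C) \leftarrow N(*) = * \rightarrow N(\EuScript D) = N(\EuScript C) \vee N(\EuScript D)$.

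The step I expect to require the most care — though it is still routine — is verifying that the monoidality isomorphism $N(X \times Y) \cong N(X) \times N(Y)$ is compatible with the composition morphisms of the concatenation category, i.e. that the two ways of forming ``$N$ of a composite in $\EuScript C \vee \EuScript D$'' versus ``composite in $N(\EuScript C) \vee N(\EuScript D)$'' coincide. Since the only genuinely new composition instances in a concatenation category are those of the shape $\EuScript D(y', y) \otimes \big(\EuScript C(x, \omega_P)\otimes \EuScript D(\alpha_Q, y')\big) \to \EuScript C(x, \omega_P) \otimes \EuScript D(\alpha_Q, y)$ (and the mirror), this reduces to the associativity and naturality coherence of the monoidal structure on $N$, which holds because $N$ is a strong monoidal functor. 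I would present this as a short remark rather than a detailed diagram chase.
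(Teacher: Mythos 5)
Your proposal is correct and follows essentially the same route as the paper: the paper's proof likewise deduces the corollary directly from the explicit description of $\EuScript C\vee\EuScript D$ in Proposition \ref{P:V-concat} together with the isomorphism $N(A\times B)\cong N(A)\times N(B)$ for posets $A$ and $B$. Your additional remarks on the compatibility of compositions and the universal-property variant are elaborations of details the paper leaves implicit.
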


\begin{proof} It follows directly from the explicit description of $\EuScript C\vee\EuScript D$ in Proposition \ref{P:V-concat} and from the fact that 
$N(A\times B)\cong N(A)\times N(B)$ for any posets $A$ and $B$.
\end{proof}

\begin{prop}\label{P:sigmacolim} Let $\EuScript C$ be a $P$-shaped simplicial category, with $P$ a bounded poset. Let 
$\varphi:\EuScript C(\alpha_P,\omega_P)\rightarrow Y$ be a morphism of simplicial sets.
Denote by $\EuScript D$ the colimit of the pushout diagram
\[\xymatrix{\Sigma Y&\Sigma \EuScript C(\alpha_P,\omega_P)\ar[l]_-{\Sigma\varphi}\ar[r]&\EuScript C},\]
where the right arrow is the counit of the adjunction of Proposition \ref{adj_Sigma_Hom}.
The simplicial category $\EuScript D$ is  $P$-shaped and has  simplicial sets of morphisms $\EuScript C(a,b)$, if 
$(a,b)\not=(\alpha_P,\omega_P)$ and $Y$ if $(a,b)=(\alpha_P,\omega_P)$. \\
The composition $\EuScript D(b,c)\otimes \EuScript D(a,b)\to \EuScript D(a,c)$ is that of $\EuScript C$ if $(a,b)\not=(\alpha_P,\omega_P)$ and is the composition in 
$\EuScript C$ followed by $\varphi$ if $(a,b)=(\alpha_P,\omega_P)$.
\end{prop}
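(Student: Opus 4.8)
The plan is to construct the candidate $s\Set$-enriched category $\EuScript{E}$ described in the statement and to check by hand that it has the universal property of the pushout defining $\EuScript{D}$; nothing beyond elementary bookkeeping is needed, the whole argument resting on the fact that $\alpha_P$ is the least and $\omega_P$ the greatest element of $P$. We may assume $\alpha_P\neq\omega_P$, the one-point case being degenerate. First I would record the key observation: for a composable pair with target slot $(a,c)$, the slot $(\alpha_P,\omega_P)$ occurs among the two factors (i.e. one of $\EuScript{E}(a,b),\EuScript{E}(b,c)$ equals $Y$) \emph{only if} $(a,c)=(\alpha_P,\omega_P)$ and the middle object $b$ is $\alpha_P$ or $\omega_P$, in which case the other factor is an identity space; and whenever $(a,c)=(\alpha_P,\omega_P)$ with $\alpha_P\prec b\prec\omega_P$, both factors $\EuScript{C}(\alpha_P,b)$ and $\EuScript{C}(b,\omega_P)$ are mapping spaces of $\EuScript{C}$ different from $\EuScript{C}(\alpha_P,\omega_P)$. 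This is immediate from $\omega_P$ being the top and $\alpha_P$ the bottom of $P$, and it dictates the composition of $\EuScript{E}$: that of $\EuScript{C}$ when the target slot is not $(\alpha_P,\omega_P)$; the composition of $\EuScript{C}$ postcomposed with $\varphi$ when the target slot is $(\alpha_P,\omega_P)$ and both source slots are honest $\EuScript{C}$-spaces; and the evident unitor in the remaining cases. The same observation applied to triple composites (where, likewise, $Y$ occurs among the factors precisely when the overall target is $(\alpha_P,\omega_P)$) shows that associativity of $\EuScript{E}$ reduces to associativity of the composition of $\EuScript{C}$ together with trivial identity manipulations, since $\varphi$ is applied only at the last step. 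Unitality is clear, the simplicial enrichment is automatic because all the maps involved are maps of simplicial sets and $\otimes=\times$, and $\EuScript{E}$ is visibly $P$-shaped.

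Next I would exhibit the cocone. The functor $\iota_{\EuScript{C}}\colon\EuScript{C}\to\EuScript{E}$ is the identity on objects and on every mapping space except that it is $\varphi\colon\EuScript{C}(\alpha_P,\omega_P)\to Y=\EuScript{E}(\alpha_P,\omega_P)$ on the slot $(\alpha_P,\omega_P)$; it respects composition by the very definition of composition in $\EuScript{E}$. The functor $\iota_{\Sigma Y}\colon\Sigma Y\to\EuScript{E}$ sends the two objects of $\Sigma Y$ to $\alpha_P$ and $\omega_P$ and is the identity of $Y$ on the unique non-trivial mapping space; it is trivially a functor. Commutativity of the pushout square is the statement that the counit $\varepsilon\colon\Sigma\,\EuScript{C}(\alpha_P,\omega_P)\to\EuScript{C}$ of Proposition \ref{adj_Sigma_Hom} is the identity on the mapping space, so that both legs of the square restrict to $\varphi$ on $\EuScript{C}(\alpha_P,\omega_P)$.

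Finally I would verify the universal property. Given a simplicial category $\EuScript{F}$ and functors $F\colon\EuScript{C}\to\EuScript{F}$, $G\colon\Sigma Y\to\EuScript{F}$ with $F\varepsilon=G\,\Sigma\varphi$, the compatibility condition forces $G$ to send the two objects of $\Sigma Y$ to $F\alpha_P,F\omega_P$ and gives $G|_Y\circ\varphi=F|_{\EuScript{C}(\alpha_P,\omega_P)}$. Then any $H\colon\EuScript{E}\to\EuScript{F}$ with $H\iota_{\EuScript{C}}=F$ and $H\iota_{\Sigma Y}=G$ is forced: $H$ must agree with $F$ on objects and on all mapping spaces $\neq(\alpha_P,\omega_P)$, and with $G|_Y$ on $Y$. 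It remains to check that this prescription does define a simplicial functor, i.e. respects composition: this is formal when the target slot is $\neq(\alpha_P,\omega_P)$ (all slots involved are then $\EuScript{C}$-spaces and $F$ is a functor) and when it is $(\alpha_P,\omega_P)$ with the middle object extremal (identity manipulations); in the remaining case, with middle object $b$ strictly between $\alpha_P$ and $\omega_P$, one computes $H$ of the $\EuScript{E}$-composite as $G|_Y\circ\varphi\circ(\text{composition of }\EuScript{C})=F|_{\EuScript{C}(\alpha_P,\omega_P)}\circ(\text{composition of }\EuScript{C})$, which equals the $\EuScript{F}$-composite of the $F$-images because $F$ is a functor, hence the $\EuScript{F}$-composite of the $H$-images. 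Thus $H$ is the unique simplicial functor filling the pushout, and $\EuScript{D}\cong\EuScript{E}$.

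I do not expect a genuine obstacle: the argument is entirely formal, and the only thing requiring care is the bookkeeping of which of the slots in a double or triple composite can equal $(\alpha_P,\omega_P)$ — everything hinges on using that $\alpha_P$ and $\omega_P$ are the extremal elements of $P$, so that $Y$ is only ever composed with identities and only ever produced as the target of a composition postcomposed with $\varphi$.
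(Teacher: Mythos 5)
Your proposal is correct and follows essentially the same route as the paper: one writes down the explicit $P$-shaped category with the prescribed hom-spaces and verifies directly that it satisfies the universal property of the pushout. Your write-up is in fact more detailed than the paper's, in particular in spelling out why the slot $(\alpha_P,\omega_P)$ can only meet identity spaces in a composition — the bookkeeping the paper leaves to the reader.
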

\begin{proof}
Note that since the functor $Ob:s\Cat\rightarrow\Set$ is cocontinuous, the set of objects of the colimit of the diagram is in bijection with $P$. Moreover the description of the hom 
sets of $\EuScript D$ shows that the category $\EuScript D$ is $P$-shaped.
We check that the proposed simplicial category $\EuScript D$ satisfies the universal property of pushouts. 
Let $\EuScript E$ be a simplicial category with $F:\EuScript C\rightarrow \EuScript E$ and $g:Y\to\EuScript E(F(\alpha_P),F(\omega_P))$ 
such that $g\circ\varphi=F_{\alpha_P,\omega_P}$. There is a unique way to build $H:\EuScript D\rightarrow \EuScript E$ making the 
diagrams commute. It coincides with $F$ on objects as well as on morphisms $\EuScript D(a,b)\rightarrow \EuScript E(F(a),F(b))$ for 
$(a,b)\not=(\alpha_P,\omega_P)$. In addition, we have $H_{\alpha_P,\omega_P}=g$. It is easy to check that $H$ is a morphism of 
simplicial categories, using the equation relating $g$ and $F$.
\end{proof}

\begin{rmk}
Proposition \ref{P:sigmacolim} holds more widely for all $P$-shaped $\mathcal V$-categories.  All it uses is the adjunction $\Sigma \vdash \Hom$ of 
Proposition \ref{adj_Sigma_Hom}. But the latter (as a mere adjunction) holds in the general setting of bipointed $P$-shaped 
$\mathcal V$-categories. Moreover, when restricted to $[1]$-shaped $\mathcal V$-categories, the adjunction is an equivalence. \end{rmk}

\section{Combinatorics} \label{combinatorics-appendix}

\subsection{The category  $\SubNeck(T)$, with $T$ a necklace}\label{S:B1}

\begin{prop}\label{P:subneckposet} The category $\SubNeck(\square^n)$ is a poset. It is in bijection with the poset of ordered partitions of $\{1,\ldots,n\}$, 
where the order $A\leq_r B$ is the refinement inverse order, defined as the reflexive and transitive closure of
\[ (A_1;\ldots;A_k)\leq_r (A_1;\ldots;A_i\cup A_{i+1};\ldots;A_k).\]
In particular, it has a greatest element, the partition with 1 block $(\{1,\ldots,n\})$, and the set of minimal elements is naturally in bijection with the 
symmetric group $\Sigma_n$. This poset admits all least upper bounds.
\end{prop}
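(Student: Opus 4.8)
The plan is to establish a bijection between the objects of $\SubNeck(\square^n)$ and ordered partitions of $\{1,\ldots,n\}$, and then to check that the morphisms match up with the refinement inverse order $\leq_r$. First I would recall that an object of $\SubNeck(\square^n)$ is a monomorphism $T\hookrightarrow(\square^n)_{\alpha,\omega}$ with $T=(n_1,\ldots,n_k)$ a necklace. By Lemma~\ref{L:canonical-embedding}, such a monomorphism is uniquely determined by a chain of vertices $\alpha=a_0\prec a_1\prec\cdots\prec a_k=\omega$ in $\square^n$ with $d(a_{i-1},a_i)=n_i$. Viewing vertices as subsets of $\{1,\ldots,n\}$ (Definition~\ref{order_ncube}), this is a chain $\emptyset=a_0\subsetneq a_1\subsetneq\cdots\subsetneq a_k=\{1,\ldots,n\}$, which is exactly the same data as the ordered partition $(A_1;\ldots;A_k)$ with $A_i=a_i\setminus a_{i-1}$. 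So on objects the bijection is immediate and sends the one-bead necklace $\square^n$ to the trivial partition $(\{1,\ldots,n\})$, and the all-ones necklaces $\mathrm{I}_n$ to the partitions into singletons, which are in bijection with $\Sigma_n$.

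Next I would check that this is a poset and that the order is $\leq_r$. Since a monomorphism between necklaces over $(\square^n)_{\alpha,\omega}$ is, on vertices, an order-preserving inclusion of one chain into another, there is at most one morphism between any two objects, so $\SubNeck(\square^n)$ is a poset. A morphism $T\hookrightarrow T'$ exists precisely when the chain of $T$ refines to the chain of $T'$ — that is, the chain of $T'$ is obtained from that of $T$ by deleting some of the intermediate vertices $a_i$. Deleting $a_i$ merges the consecutive blocks $A_i$ and $A_{i+1}$; this is exactly the generating relation for $\leq_r$. One must be slightly careful about the \emph{direction}: in $\SubNeck$ the arrows go from the finer necklace (more beads) to the coarser one, matching the convention $(A_1;\ldots;A_k)\leq_r(A_1;\ldots;A_i\cup A_{i+1};\ldots;A_k)$ in the statement, so the poset of objects with its morphism order is isomorphic, not anti-isomorphic, to $(\text{ordered partitions},\leq_r)$.

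It remains to identify least upper bounds. Given two ordered partitions $A$ and $B$ of $\{1,\ldots,n\}$, equivalently two chains $\mathcal{C}_A$ and $\mathcal{C}_B$ of subsets from $\emptyset$ to $\{1,\ldots,n\}$, an upper bound in $\leq_r$ corresponds to a common coarsening, i.e.\ a chain contained in both $\mathcal{C}_A$ and $\mathcal{C}_B$. The set of such chains is closed under union (the union of two chains that are both subchains of a given chain is again a chain), so $\mathcal{C}_A\cap\mathcal{C}_B$ — which always contains $\emptyset$ and $\{1,\ldots,n\}$ and hence is nonempty — is the largest common subchain and yields the least upper bound $A\vee B$. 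Thus every pair, and by induction every finite nonempty family, has a least upper bound; the trivial partition $(\{1,\ldots,n\})$ is the greatest element, corresponding to the chain $\emptyset\subsetneq\{1,\ldots,n\}$, and the singleton partitions are the minimal elements. The main obstacle, such as it is, is purely bookkeeping: keeping the translation between vertices-as-subsets, strictly increasing chains, ordered partitions, and necklace beads consistent, and getting the variance of the morphisms right; there is no deep difficulty once Lemma~\ref{L:canonical-embedding} is in hand.
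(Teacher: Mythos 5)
Your argument is correct and matches the paper's proof essentially step for step: both identify objects with strict chains $\alpha=a_0\prec\cdots\prec a_k=\omega$ via Lemma \ref{L:canonical-embedding}, translate these into ordered partitions, characterise morphisms as inclusions of vertex sets (giving uniqueness and the order $\leq_r$), and obtain least upper bounds from the intersection of the vertex chains. The only cosmetic difference is that you phrase the least upper bound via the largest common subchain (closure under union) where the paper directly takes $\cap_i\overline{s_i}$; these are the same construction.
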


\begin{proof} (See also \cite[Section 7]{Zie17} and  \cite[Section 7]{Zie20},  where elements of $\SubNeck(\square^n)$ are identified with 
cube chains). As seen in Lemma \ref{L:canonical-embedding}, an object $\varphi:\square^{n_1}\vee\ldots\square^{n_k}
\hookrightarrow \square^n$ is determined 
\begin{itemize}
\item by a sequence $s_\varphi=(\alpha=a_0\prec a_1\prec\ldots\prec a_k=\omega),$ 
with $a_i\in(\square^n)_0$, $d(a_{i-1},a_i)=n_i$ and $n_1+\ldots+n_k=n$, or, equivalently,
\item
by an ordered partition of $\{1,\ldots,n\}$: setting $A_i=a_i\setminus a_{i-1}$, we get  $A_\varphi:=(A_1;\ldots;A_k)$.
\end{itemize}
We denote the set $\{a_0,\ldots,a_k\}$ arising from  a sequence  $s$ as above by $\overline{s}$. 
The following easy verifications are left to the reader.
\begin{itemize}
\item
Given $\varphi,\psi$  in $\SubNeck(\square^n)$, there is a morphism from $\varphi$ to $\psi$ if and only 
$\overline{s_\psi} \subseteq \overline{s_\varphi}$, and this morphism is unique.
\item
We have  $\overline{s_\psi} \subseteq \overline{s_\varphi}$ if and only if $A_\varphi\leq_r A_\psi$.
\end{itemize}
In particular, if $A$ is a set of sequences $\{s_1,\ldots,s_l\}$ then its least upper bound is the sequence $s$ associated to 
$\cap_{i=1}^l \overline{s_i}$.
\end{proof}

The following corollary is a direct consequence of the previous proposition and of Proposition \ref{P:Nec_morphisms}.

\begin{cor}\label{C:subneckposet} Let $T=\square^{n_1}\vee\dots\vee\square^{n_k}$ be a necklace. The category $\SubNeck(T)$ is a poset, 
whose poset relation is denoted  $\leq_r$. It is the product of the categories $\SubNeck(\square^{n_i})$. In particular, it has a greatest element and admits all least upper bounds.
\end{cor}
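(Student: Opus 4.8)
The plan is to realise $\SubNeck(T)$ as the product $\prod_{i=1}^{k}\SubNeck(\square^{n_i})$ and then transport the conclusions of Proposition \ref{P:subneckposet} factor by factor. On objects, an element of $\SubNeck(T)$ is a monomorphism $\varphi:X\hookrightarrow T_{\alpha,\omega}$ in $\cSdp$ with $X$ a necklace. By Proposition \ref{P:Nec_morphisms}(2), $\varphi$ decomposes as $\varphi=\varphi_1\vee\dots\vee\varphi_k$ with $\varphi_i:A_i\to(n_i)$ in $\Nec$ for some decomposition $(A_1,\dots,A_k)$ of the bead sequence of $X$. Since $\varphi$ is a monomorphism, it is a monomorphism on every bead of $X$ by Lemma \ref{L:TNDmono}(2), hence non-constant there (beads are positive-dimensional cubes), so this decomposition is the unique one and, again by Lemma \ref{L:TNDmono}(2), each $\varphi_i$ is a monomorphism, \ie an object of $\SubNeck(\square^{n_i})$. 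Conversely, Lemma \ref{L:TNDmono}(2) shows that wedging objects $\varphi_i\in\SubNeck(\square^{n_i})$ yields back a monomorphism $\varphi_1\vee\dots\vee\varphi_k$ into $T$, and these constructions are mutually inverse on objects.

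Next I would handle morphisms. A morphism $\varphi\to\psi$ in $\SubNeck(T)$ is a monomorphism $g:X\to Y$ in $\Nec$ with $\psi g=\varphi$, where $\psi=\psi_1\vee\dots\vee\psi_k$ with $\psi_i:B_i\to(n_i)$. Applying Proposition \ref{P:Nec_morphisms}(2) to $g$ (non-constant on every bead, being a monomorphism) gives $g=g_1\vee\dots\vee g_k$ with $g_i:C_i\to B_i$ for some decomposition $(C_1,\dots,C_k)$ of the beads of $X$. The step requiring care is to see that this matches the decomposition of $\varphi$: since $\psi g=(\psi_1 g_1)\vee\dots\vee(\psi_k g_k)$ is a decomposition of $\varphi$ along the bead structure of $T$ and $\varphi$ is non-constant on every bead, the uniqueness clause of Proposition \ref{P:Nec_morphisms}(2) forces $C_i=A_i$ and $\psi_i g_i=\varphi_i$. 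Each $g_i$ is then a monomorphism by Lemma \ref{L:TNDmono}(2), hence a morphism $\varphi_i\to\psi_i$ in $\SubNeck(\square^{n_i})$. As wedging is functorial and monoidal (Definition \ref{D:concat}), the correspondence $g\leftrightarrow(g_1,\dots,g_k)$ respects composition and identities, yielding an isomorphism of categories $\SubNeck(T)\cong\prod_{i=1}^{k}\SubNeck(\square^{n_i})$.

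Finally, Proposition \ref{P:subneckposet} tells us that each factor $\SubNeck(\square^{n_i})$ is a poset with a greatest element and admitting all least upper bounds, and all these properties pass to finite products componentwise; in particular $\SubNeck(T)$ is a poset, with induced order the componentwise refinement inverse order, which I would continue to denote $\leq_r$. The only genuine, though modest, obstacle is the matching of the three wedge decompositions (of $\varphi$, $\psi$ and $g$) in the morphism step, and this is precisely what the uniqueness part of Proposition \ref{P:Nec_morphisms}(2) delivers, once one observes that monomorphisms of necklaces are non-constant on each bead.
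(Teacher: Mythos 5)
Your proof is correct and follows exactly the route the paper intends: the paper leaves this corollary as "a direct consequence" of Proposition \ref{P:subneckposet} and Proposition \ref{P:Nec_morphisms}, and you have correctly filled in the details, using the uniqueness clause of Proposition \ref{P:Nec_morphisms}(2) (via non-constancy of monomorphisms on beads) together with Lemma \ref{L:TNDmono}(2) to establish the product decomposition on objects and morphisms.
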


\begin{defn}\label{D:closed} Let $(P,\leq)$ be a poset. A subset $A$ of $P$ is {\sl downward closed} if for all $y\leq x$ in $P$ we have $x\in A\Rightarrow y\in A$.
It is {\sl upward closed} if it is downward closed in the opposite poset of $P$. 
\end{defn}

\begin{lem}\label{L:classical}
Let $(P,\leq)$ be a poset and $A$ and $B$ upward closed subsets of $P$.
Then $N(A\cup B) \cong \colim(N(A)\hookleftarrow N(A\cap B)\hookrightarrow N(B))$.
\end{lem}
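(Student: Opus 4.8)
The plan is to show that the canonical comparison map from the pushout $\colim\bigl(N(A)\hookleftarrow N(A\cap B)\hookrightarrow N(B)\bigr)$ to $N(A\cup B)$ is an isomorphism of simplicial sets. First I would note that $A\cup B$ and $A\cap B$, with the order induced from $P$, are again posets, and that the whole commutative square in question is obtained by applying the nerve functor to the evident square of inclusions of subposets $A\cap B\hookrightarrow A,\ A\cap B\hookrightarrow B,\ A\hookrightarrow A\cup B,\ B\hookrightarrow A\cup B$ inside $P$; this is what produces the comparison map and its naturality. Since colimits of simplicial sets are computed degreewise, it suffices to prove that for every $n\geq 0$ the square of sets
\[\begin{tikzcd}
N(A\cap B)_n \ar[r] \ar[d] & N(A)_n \ar[d] \\
N(B)_n \ar[r] & N(A\cup B)_n
\end{tikzcd}\]
is a pushout in $\Set$. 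All four maps are injections, induced by inclusions of subposets, so upon identifying each $N(-)_n$ with a subset of $N(P)_n$ the claim amounts to the two set-theoretic identities $N(A)_n\cap N(B)_n = N(A\cap B)_n$ and $N(A)_n\cup N(B)_n = N(A\cup B)_n$ inside $N(P)_n$.

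The first identity is immediate: a chain $x_0\leq\dots\leq x_n$ in $P$ has all of its vertices in $A$ and all of them in $B$ if and only if all of its vertices lie in $A\cap B$. The inclusion $N(A)_n\cup N(B)_n\subseteq N(A\cup B)_n$ is also clear, so the only content is the reverse inclusion, and this is precisely where upward-closedness is used. Given a chain $x_0\leq\dots\leq x_n$ with each $x_i\in A\cup B$, I claim that either all $x_i$ lie in $A$ or all lie in $B$. If not, choose $i$ with $x_i\notin A$ (so $x_i\in B$) and $j$ with $x_j\notin B$ (so $x_j\in A$); then $i\neq j$. If $i\leq j$, then $x_i\leq x_j$ and $x_i\in B$ together with $B$ upward closed force $x_j\in B$, contradicting $x_j\notin B$; if $j\leq i$, then $x_j\leq x_i$ and $x_j\in A$ together with $A$ upward closed force $x_i\in A$, contradicting $x_i\notin A$. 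This proves the claim, hence the second identity, and therefore the square above is a pushout of sets for each $n$, which is what we wanted.

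I do not anticipate a genuine obstacle here: the argument is entirely elementary and the only step requiring any care is the combinatorial claim about chains contained in a union of two upward-closed sets. The one thing worth stating explicitly for cleanliness is that, because all the maps involved are monomorphisms of simplicial sets, the pushout is computed as a union inside $N(P)$, so that the comparison map is visibly a bijection in each degree.
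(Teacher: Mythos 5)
Your proof is correct and fills in exactly the content that the paper's one-line proof (``the hypotheses imply that $N(A\cup B)$ satisfies the universal property of the pushout'') leaves implicit: the degreewise identification of the pushout with a union inside $N(P)$, whose only nontrivial point is that a chain in $A\cup B$ must lie entirely in $A$ or entirely in $B$ when both are upward closed. Same approach, just written out in full.
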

\begin{proof} The hypotheses imply that $N(A\cup B)$ satisfies the universal property of the pushout diagram.
\end{proof}

\begin{lem}\label{lem_subneck}
Let $T$ be a necklace. If $\EuScript{A} \subseteq \SubNeck(T)$ is downward closed (for the order $\leq_r$), then
the canonical morphism $\colim\limits_{U\in\EuScript{A}}\Cbox_t(U) \to \Cbox_t(T)$ is a monomorphism of simplicial sets.
\end{lem}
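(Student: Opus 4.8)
The plan is to reduce the statement to the case of a single cube and then to a combinatorial fact about the poset $\SubNeck(\square^n)$ together with the explicit description of $\Cbox_t$ on subnecklaces. First I would recall, from Corollary~\ref{C:subneckposet}, that $\SubNeck(T)$ is a poset (with order $\leq_r$) isomorphic to a product of the posets $\SubNeck(\square^{n_i})$, and that by Corollary~\ref{C:pathneck}(3) and Theorem~\ref{rigid_neck} the functor $U\mapsto \Cbox_t(U)$ turns the concatenation of necklaces into products of simplicial sets; so the colimit over a downward-closed $\EuScript{A}\subseteq\SubNeck(T)$ and the target $\Cbox_t(T)$ both factor compatibly through these products. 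I would therefore concentrate on showing that for each $U\in\SubNeck(T)$ the canonical map $\Cbox_t(U)\to\Cbox_t(T)$ is a monomorphism, and that these maps are jointly compatible in the sense needed to compute the colimit as a union inside $\Cbox_t(T)$.

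Concretely: fix a monomorphism of necklaces $U=T_{[a,b]}\hookrightarrow T$ coming from a vertex of $\SubNeck(T)$. By Lemma~\ref{unique-representative} and Proposition~\ref{P:pathcubes}, $\Cbox_t(U)=\Cbox(U)(\alpha,\omega)$ is the nerve of a product of weak Bruhat posets $\Sigma_{b\setminus a}$ (decomposed along the beads of $U$), while $\Cbox_t(T)$ is a nerve of a product of such posets for the beads of $T$. The map induced by $U\hookrightarrow T$ on these posets is, on underlying sets, the inclusion of those path-sequences that pass through the intermediate vertices recorded by $U$; this is an order-embedding of posets, since $\leadsto$ on $\Cbox_{path}(T)$ restricts to $\leadsto$ on $\Cbox_{path}(U)$ by Corollary~\ref{C:pathneck}(1). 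The nerve functor sends injective maps of posets (indeed, injective functors) to monomorphisms of simplicial sets, so each $\Cbox_t(U)\to\Cbox_t(T)$ is a monomorphism, and its image is the subcomplex spanned by the chains of paths supported on sequences refining the partition $A_U$ attached to $U$.

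It then remains to see that $\colim_{U\in\EuScript{A}}\Cbox_t(U)$ is precisely the union of these subcomplexes inside $\Cbox_t(T)$. For this I would use that $\EuScript{A}$ is downward closed for $\leq_r$, which by the description in Proposition~\ref{P:subneckposet} means that if $U\in\EuScript{A}$ and $U'$ refines $U$ then $U'\in\EuScript{A}$; equivalently, the associated family of subcomplexes of $\Cbox_t(T)$ is closed under the intersections that arise (the intersection of the subcomplexes for $U_1$ and $U_2$ is the one for their least upper bound $U_1\vee U_2$, which lies in $\EuScript{A}$ whenever $U_1,U_2$ do, because $\leq_r$-refinements of an l.u.b. refine each factor). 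This is the poset-theoretic analogue of Lemma~\ref{L:classical} for a general diagram, and together with the fact that $\SubNeck(T)$ is a poset with the needed meets it lets one check directly that the union subcomplex satisfies the universal property of the colimit of the diagram $U\mapsto\Cbox_t(U)$ over $\EuScript{A}$. A colimit computed as such a union of subcomplexes of a fixed simplicial set is manifestly a monomorphism into that simplicial set, which is the assertion.

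The step I expect to be the main obstacle is the last one: verifying carefully that the colimit over a downward-closed $\EuScript{A}$ is computed as the union of subcomplexes, i.e.\ that the transition maps in the diagram are exactly the inclusions between these subcomplexes and that the diagram has enough meets realized inside $\EuScript{A}$ for the union to be a genuine colimit (not merely a cocone). The product decomposition from Corollary~\ref{C:subneckposet} should make the bookkeeping manageable—one reduces to checking the claim factor by factor on $\SubNeck(\square^{n_i})$, where the ordered-partition description is completely explicit—but the combinatorial compatibilities between intersections of subcomplexes and least upper bounds in $\SubNeck$ are where the argument must be written out with care.
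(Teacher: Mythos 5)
Your overall strategy---show each $\Cbox_t(U)\to\Cbox_t(T)$ is the inclusion of an identifiable subcomplex and then argue that the colimit over $\EuScript{A}$ is the union of these subcomplexes because overlaps are witnessed inside $\EuScript{A}$---is sound and close in spirit to the paper's argument. But the step you yourself single out as the crux is stated backwards, and as written it would fail. You claim that the intersection of the subcomplexes attached to $U_1$ and $U_2$ is the subcomplex of their \emph{least upper bound} $U_1\vee U_2$, and that this l.u.b.\ lies in $\EuScript{A}$ ``because $\leq_r$-refinements of an l.u.b.\ refine each factor.'' Both assertions are false. In the order $\leq_r$, finer partitions are \emph{smaller}, the subcomplex of $U$ consists of the chains of paths passing through all the vertices recorded by $U$, and the subcomplex of the l.u.b.\ (the coarsest common coarsening, whose vertex set is the \emph{intersection} $\overline{s_{U_1}}\cap\overline{s_{U_2}}$) therefore \emph{contains} both subcomplexes rather than being their intersection. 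The intersection of the two subcomplexes is governed by the \emph{union} $\overline{s_{U_1}}\cup\overline{s_{U_2}}$, i.e.\ by the greatest lower bound (common refinement) when that union is a chain, and is empty otherwise. Moreover, downward closedness of $\EuScript{A}$ only propagates \emph{downwards}: it puts the g.l.b.\ $U_1\wedge U_2\leq_r U_1$ in $\EuScript{A}$, but gives no control over the l.u.b., which sits above $U_1$ and $U_2$ and need not belong to $\EuScript{A}$ at all. With the l.u.b.\ in place of the g.l.b., the union is not a colimit of your diagram and the argument collapses; with the g.l.b.\ (existing exactly when the intersection of subcomplexes is nonempty), the argument goes through.

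For comparison, the paper avoids the union-of-subcomplexes bookkeeping entirely: given $u\in\Cbox_t(U)_n$ and $v\in\Cbox_t(V)_n$ with the same image $\varphi$ in $\Cbox_t(T)$, it takes the set $A_\varphi$ of $n+1$ paths underlying $\varphi$, viewed as minimal elements of $\SubNeck(T)$, and forms their least upper bound $W$ (which exists by Corollary~\ref{C:subneckposet}). Since $U$ and $V$ are upper bounds of $A_\varphi$, one gets $W\leq_r U$ and $W\leq_r V$, so $W\in\EuScript{A}$ by downward closure, and $\varphi$ lifts through $\Cbox_t(W)$, identifying $u$ and $v$ in the colimit. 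Note that the least upper bound is taken of the \emph{paths in the simplex}, not of $U$ and $V$, which is precisely why it lands \emph{below} $U$ and $V$ and why downward closedness applies---the directional point your write-up gets wrong. Your reduction to a single cube and your order-embedding argument for the injectivity of each $\Cbox_t(U)\to\Cbox_t(T)$ are fine (the latter needs that concatenation $\Sigma_A\times\Sigma_B\to\Sigma_{A\sqcup B}$ is a full embedding for the weak Bruhat orders, which is true via inversion sets but should be justified), so the repair is local; but as proposed, the decisive lattice-theoretic step is not correct.
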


\begin{proof}
We only need to examine the situation of two $n$-simplices $u,v$ coming from different subnecks $U,V$ in $\EuScript{A}$ and whose images in 
$\Cbox_t(T)$ are identified:
\[\begin{tikzcd}
\Delta^n \ar[r, "u"] \ar[d, "v"] & \Cbox_t(U) \ar[d] \\
\Cbox_t(V) \ar[r] & \Cbox_t(T)
\end{tikzcd}\]
The map $\varphi: \Delta^n\to\Cbox_t(T)$ gives a set $A_\varphi$ of $n+1$ paths of $T$ (by Theorem \ref{rigid_neck}) with values both in $U$ and $V$. 
Let $W$ be the upper bound of $A_\varphi$ in $\SubNeck(T)$, provided by the preceding corollary. Since $U$ and $V$ are upper bounds of $A_\varphi$, 
we have $W\leq_r U$ and $W\leq_r V$ in $\SubNeck(T)$.
Hence there is a factorisation in the diagram:
\[\begin{tikzcd}[row sep=-1pt]
\Delta^n \ar[rr, "u"] \ar[dd, "v"] \ar[dr, dashed, "w"] &&     \Cbox_t(U) \ar[dd] \\
&\Cbox_t(W) \ar[dl, dashed] \ar[ur, dashed] \ar[dr] \\
\Cbox_t(V) \ar[rr] && \Cbox_t(T)
\end{tikzcd}\]
Moreover we have $W \in \EuScript{A}$, since $\EuScript{A}$ is downward closed.  Thus the diagram says that $u,v,w$ are identified in the colimit, which completes the proof.
\end{proof}

\subsection{On the homotopy type of  $\SubNeck(\sqcap^n_{0,n}) $}

To simplify the notation in this section, for $n\geq 1$,  we will denote by $P_n$ the poset of ordered partitions of $\{1,\dots,n+1\}$, that is, $P_n=\SubNeck(\square^{n+1})$. 
Similary, we set $\partial P_n= \SubNeck(\partial\square^{n+1})$ and $\sqcap P_n=\SubNeck(\sqcap^{n+1}_{0,n+1})$ (see Definition \ref{D:subneck}).
Note that
\[ \partial P_n = P_n\setminus{(\{1,\dots,n+1\})}\quad \text{ and } \quad \sqcap P_n = \partial P_n\setminus{(\{1,\dots,n\};\{n+1\})}.\]

The nerve of $P_n$ is contractible, since $P_n$ has a greatest element.

The next proposition is what we need for Proposition \ref{C_hn}.

\begin{prop}\label{C:nerf_subneck} For every $n\geq 2$, the nerve of $\SubNeck((\sqcap^n_{0,n})$ is contractible.
\end{prop}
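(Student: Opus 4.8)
The plan is to argue with the explicit poset model. By the identification above, $\SubNeck(\sqcap^n_{0,n}) = \sqcap P_{n-1}$, so for $n\geq 2$ it suffices to prove that $N(\sqcap P_k)$ is contractible for every $k\geq 1$, where $\sqcap P_k = P_k\setminus\{\widehat{1}_k,\, e_k\}$, with $\widehat{1}_k = (\{1,\dots,k+1\})$ the greatest element of the poset of ordered partitions and $e_k = (\{1,\dots,k\};\{k+1\})$; the case $k=1$ is a single point. I use freely that an ordered partition of $\{1,\dots,m\}$ is the same as a chain $\emptyset\subsetneq\cdots\subsetneq\{1,\dots,m\}$ of subsets, and that $x\leq_r y$ means precisely that the chain of $y$ is contained in the chain of $x$. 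The idea is to compare $\sqcap P_k$ with $P_{k-1}$, which has a greatest element and hence contractible nerve, via the ``forget $k+1$'' map $\bar\rho:\sqcap P_k\to P_{k-1}$ that deletes $k+1$ from whichever block it lies in and discards that block if it becomes empty; on chains, $\bar\rho$ sends $C_x$ to $\{c\setminus\{k+1\} : c\in C_x\}$, so it is visibly monotone.

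By Quillen's Theorem A (in its coslice form, applied to $\bar\rho^{\mathrm{op}}$), since $N(P_{k-1})$ is contractible it suffices to show that each coslice $q\backslash\bar\rho = \{x\in\sqcap P_k : q\leq_r\bar\rho(x)\}$ has contractible nerve. Writing the chain of $q\in P_{k-1}$ as $\emptyset = d_0\subsetneq\cdots\subsetneq d_s = \{1,\dots,k\}$, one checks that $x\in q\backslash\bar\rho$ iff the chain of $x$ lies inside the ``ladder'' $\mathcal D_q = \{d_t,\ d_t\cup\{k+1\} : 0\leq t\leq s\}$, a poset isomorphic to $[s]\times[1]$ with least element $d_0$ and greatest element $d_s\cup\{k+1\} = \{1,\dots,k+1\}$. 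What makes this choice of $\bar\rho$ work is that $d_s = \{1,\dots,k\}$ always occurs in $\mathcal D_q$, so the chain $\{d_0,d_s,d_s\cup\{k+1\}\}$ of $e_k$ lies in $\mathcal D_q$ for every $q$. Hence $q\backslash\bar\rho$ is isomorphic to the poset of chains from bottom to top in $[s]\times[1]$, with its greatest element and the distinguished $2$-block chain through $(s,0)$ removed --- i.e. the same shape of poset as $\sqcap P_k$, but built over the ladder $[s]\times[1]$ instead of the Boolean lattice $2^{\{1,\dots,k+1\}}$.

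The main obstacle is therefore to prove this ``ladder'' contractibility statement. I would handle it by one further instance of the same device: forgetting the $[1]$-coordinate maps chains in $[s]\times[1]$ monotonically onto chains in $[s]$, a poset with a greatest element whose chain poset is the Boolean lattice on $\{1,\dots,s-1\}$ (trivially contractible), and its coslices turn out to be smaller ladders of the same type, so the recursion terminates on a single point. A quicker but less self-contained route is to observe that $P_k$ is the face poset of the permutohedron $\Pi_k$, with $\widehat{1}_k$ the improper face and $e_k$ a facet; then $N(\partial P_k)$ is the barycentric subdivision of $\partial\Pi_k\cong S^{k-1}$ and $N(\sqcap P_k)$ is that of $\partial\Pi_k$ with one open facet removed, which is a $(k-1)$-ball by the standard fact that the antistar of a facet in a polytope boundary is a ball, hence contractible. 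I would present the self-contained inductive version; verifying monotonicity of $\bar\rho$ and the description of its coslices is routine, and the only place demanding genuine care is bookkeeping the ladder recursion (equivalently, making ``a sphere minus an open top-cell is a ball'' precise at the level of order complexes).
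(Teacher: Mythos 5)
Your use of Quillen's Theorem A along the deletion map $\bar\rho$ is a genuinely different route from the paper's, and the first half of it is correct: the paper covers $\sqcap P_k$ by upward-closed pieces $T_i$ recording the position of the last letter, identifies each piece up to homotopy with $P_{k-1}$ or $\partial P_{k-1}$ via poset adjunctions along the same deletion map, and collapses the resulting homotopy colimit, whereas you feed the deletion map directly into Theorem A. Your identification of the coslice $q\backslash\bar\rho$ with the poset of bottom-to-top chains in the ladder $\mathcal D_q\cong [s]\times[1]$, minus its greatest element $\widehat{1}$ and the three-element chain $e$ through $(s,0)$, is also correct, and you rightly isolate the role of $d_s=\{1,\dots,k\}$ in guaranteeing that both excluded partitions lie in every $\mathcal D_q$.

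The gap is in the ladder lemma, and it is not mere bookkeeping: the second application of the same device is circular. Projecting chains in $[s]\times[1]$ to chains in $[s]$, the coslice over the \emph{minimal} element of $C([s])$ --- the full chain $\{0,1,\dots,s\}$ --- consists of all chains $X$ with $\pi(X)\subseteq\{0,\dots,s\}$, i.e.\ it is the entire poset $C([s]\times[1])\setminus\{\widehat{1},e\}$ you are trying to prove contractible, not a smaller ladder. The recursion therefore never terminates, and Theorem A cannot be invoked (it requires \emph{all} coslices to be contractible, including this one). The polytope alternative you sketch is true but leans on shellability/PL facts you do not establish. The ladder lemma does, however, admit a short direct proof that would repair your argument: the map $X\mapsto X\setminus\{(s,0)\}$ is a well-defined, monotone, idempotent map $\geq\mathrm{id}$ on your poset (it never produces $\widehat{1}$ or $e$, precisely because those two are excluded from the source), hence a closure operator; its image --- the bottom-to-top chains avoiding $(s,0)$, other than $\widehat{1}$ --- has homotopy equivalent nerve, and that image is the (reversed) face poset of the order complex of $([s]\times[1])\setminus\{(0,0),(s,1),(s,0)\}$, a poset with greatest element $(s-1,1)$, so its nerve is contractible. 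With that substitution your proof goes through.
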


\begin{proof} In our notation, we have to prove that, for $n\geq 1$, the nerve of $\sqcap P_n$ is contractible.
For $n=1$, the poset $\sqcap P_1$ is a singleton, namely the ordered partition $(\{2\};\{1\})$, hence is contractible.
Assume $n\geq 2$. Let us fix some notation:
\begin{itemize}
\item For an ordered partition $x = (A_1;\dots;A_k)\in P_n$, and $0\leq l\leq k$, we set 
\[m_l(x):=\#(A_1\cup\dots\cup A_{l}),\] 
with the convention that $m_0(x)=0$. Note that $m_k(x)=n+1$.  
   \item For an ordered partition $x = (A_1;\dots;A_k)\in P_n$ with $n+1\in A_l$, we set \[\alpha(x) := m_{l-1}(x) \text{ and } \beta(x) := m_l(x).\]
    \item We have $\partial P_n = T_0 \cup T_1 \cup \dots \cup T_n$ with \[T_i := \{x \in \partial P_n | \alpha(x) \leq i < \beta(x)\}.\]
    \item For every $0\leq i\leq n-1$ we set
     \[T_{i,i+1} := T_i \cap T_{i+1} = (T_0\cup \dots \cup T_i)\cap T_{i+1}.\]

\end{itemize}
We note that $\sqcap P_n = T_0 \cup T_1 \cup \dots T_{n-1}\cup T'_n$ with $T'_n=T_n\cap (\sqcap P_n)$. 
We also note (by a simple case analysis) that $T_0,\ldots,T_{n-1}$ and $T'_n$ are upward closed, allowing us to use   Lemma \ref{L:classical} repeatedly and get
\[N(\sqcap P_n) \cong \colim\left(\begin{tikzcd}[column sep=-4pt]
N(T_0) & & N(T_1) &\ldots&\ldots&\ldots & N(T_{n-1}) & & N(T_n')
\\ & N(T_{0,1}) \ar[ul, hook]\ar[ur, hook] & & \ar[ul, hook]& \dots  &\ar[ur, hook] && N(T_{n-1,n})\ar[ul, hook]\ar[ur, hook] & 
\end{tikzcd}\right)\]

Using  \cite[Proposition 19.9.1]{H}, we conclude that the colimit of this diagram is Kan-Quillen equivalent to its homotopy colimit. Indeed, let $D$ be 
the underlying category of the diagram, which has objects $x_i$ for $0\leq i\leq n$ and $y_{i,i+1}$ for $0\leq i<n$ and morphisms from $y_{i,i+1}$ to 
$x_i$ and $x_{i+1}$. We endow $D$ with the following Reedy structure:  $x_i$ has degree $2i$ and $y_{i,i+1}$ has degree $2i-1$. Then $D$ has fibrant 
constants and the diagram above is Reedy cofibrant.
In consequence, we focus our attention on  the homotopy type of 
$N(T_i)$ for $0\leq i<n$, $N(T'_n)$ and $N(T_{i,i+1})$ for $0\leq i<n$.

Let $\pi : P_n \to P_{n-1}$ be the poset morphism removing $(n+1)$ from the ordered partition. Note that $\pi(T_i)\subset \partial P_n$ for every $1\leq i\leq n-1$ since
\[\pi^{-1}(\{1,\ldots,n\})=\{ (\{1,\ldots,n+1\}), (\{n+1\};\{1,\ldots,n\}), (\{1,\ldots,n\};\{n+1\})\},\]
and since none of the elements in this set lies in $T_i$, for $1\leq i\leq n-1$.
Note that given a partition $x=(A_1;\ldots;A_k)$ of $P_{n-1}$ and $0\leq i\leq n-1$, there exists a unique $l\in\{0,\ldots,k-1\}$ such that $m_{l}(x)\leq i<m_{l+1}(x)$.
We leave it to the reader to check the following facts.
\begin{itemize}
 \item For $0\leq i < n$, the induced map $T_{i,i+1}\stackrel{\pi}\to \partial P_{n-1}$ is an isomorphim of posets with inverse
\[x=(A_1;\dots;A_k)\in \partial P_{n-1} \mapsto 
    				(A_1;\dots;A_{l+1}\cup\{n+1\};\dots;A_k),\]
where  $m_l(x) \leq i <m_{l+1}(x)$.

\item For $0<i<n$, the map $T_i \stackrel{\pi}\to  \partial P_{n-1}$ is an adjunction of posets with a section $\sigma$ given for $x=(A_1;\ldots;A_k)\in \partial P_{n-1}$ by 
\[
\sigma(x)=\begin{cases}
    				(A_1;\dots,A_l;\{n+1\};A_{l+1};\dots;A_k), & \text{ if } m_l(x)=i, \\
    				(A_1;\dots;A_{l+1}\cup\{n+1\};\dots;A_k), & \text{ if } m_l(x)<i<m_{l+1}(x).
		\end{cases}
\]
(Indeed, $\pi\circ\sigma = \id$ and $\sigma\circ\pi \leq \id$.)

\item   Similarly, the maps  $T_0 \stackrel{\pi}\to P_{n-1}$ and $T'_n \stackrel{\pi}\to \partial P_{n-1}$ are adjunctions of posets with the respective sections 
$$\begin{array}{l}
(A_1;\dots;A_k)\in P_{n-1} \mapsto (\{n+1\};A_1;\dots;A_k),\\
(A_1;\dots;A_k)\in \partial P_{n-1} \mapsto (A_1;\dots;A_k;\{n+1\}).
\end{array}$$
\end{itemize}

Putting everything together, and using the fact that an adjunction of posets gives rise to a homotopy equivalence, and hence to a Kan-Quillen equivalence 
between their nerves, we have:
\[\begin{array}{lllllll}
N(\sqcap P_n)& \sim & \hocolim\left(\begin{tikzcd}[column sep={3.5em,between origins}]
N(P_{n-1}) & & N(\partial P_{n-1}) &\ldots&  N(\partial P_{n-1}) & & N(\partial P_{n-1})
\\ & N(\partial P_{n-1}) \ar[ul, hook]\ar[ur, equal] & & \ldots  && N(\partial P_{n-1})\ar[ul, equal]\ar[ur, equal] & 
\end{tikzcd}\right)\\\\
& \sim & \hocolim \left( \begin{tikzcd}[column sep=0pt]
N(P_{n-1})  
\\  N(\partial P_{n-1}) \ar[u, hook]
\end{tikzcd} \right) \sim
 \colim \left( \begin{tikzcd}[column sep=0pt]
N(P_{n-1}) &
\\  N(\partial P_{n-1}) \ar[u, hook]
\end{tikzcd} \right)  \sim N(P_{n-1}).
\end{array}\]
Hence the nerve of  $\sqcap P_n$ is contractible.
\end{proof}
\begin{rmk} Related results relative to $P_n$ and $\partial P_n$ can be found in, say, \cite{Zie17} or in \cite{Zieg95} (where they are put to use to establish 
connections between cubical sets and higher dimensional automata, through directed path spaces): the geometric realisations of the ordered partition 
posets are the permutohedra, which are homeomorphic to a ball. Moreover, the  pair $(|N(P_n)|,|N(\partial P_n)|)$ is homeomorphic to the pair $(D^n, S^{n-1})$.
\end{rmk}

\bibliographystyle{abbrv}

\bibliography{cubical}
\end{document}